\documentclass[nofootinbib,superscriptaddress,floatfix,pra,table,aps, onecolumn]{revtex4-2}

\usepackage{scrextend}
\usepackage{soul}
\usepackage[pdftex]{graphicx}
\usepackage{dcolumn}
\usepackage{bm}
\usepackage{epsfig}
\usepackage{amsmath}
\usepackage{amssymb} 
\usepackage{amsfonts}
\usepackage{pifont} 
\usepackage{mathtools}
\usepackage{multirow}
\usepackage{float}
\usepackage[utf8]{inputenc} 
\usepackage{epstopdf}
\usepackage{dsfont}
\usepackage{MnSymbol}
\usepackage{xspace}
\usepackage[most]{tcolorbox}
\tcbuselibrary{breakable}
\usepackage{multirow}
\usepackage{enumitem}
\usepackage{moreenum}
\usepackage{csquotes}
\usepackage{tabularray}
\usepackage{booktabs}
\usepackage[linesnumbered,lined,boxed,commentsnumbered,ruled,vlined]{algorithm2e}
\usepackage{dashbox}

\usepackage[dvipsnames]{xcolor}

\usepackage{amsthm}
\newtheorem{theorem}{Theorem}
\newtheorem*{theorem*}{Theorem}
\newtheorem{lemma}[theorem]{Lemma}
\newtheorem*{lemma*}{Lemma}
\newtheorem{conjecture}[theorem]{Conjecture}
\newtheorem{corollary}[theorem]{Corollary}
\newtheorem*{corollary*}{Corollary}
\newtheorem{proposition}[theorem]{Proposition}
\newtheorem*{proposition*}{Proposition}
\newtheorem{definition}[theorem]{Definition}
\newtheorem*{definition*}{Definition}
\newtheorem{example}[theorem]{Example}
\newtheorem*{example*}{Example}

\newtcbtheorem[]{tcexample}{Example}{breakable, colback=orange!3!white,colframe=orange!85!black, fonttitle=\bfseries}{Exa}
\newtcolorbox[]{thmbox}[1][]{breakable,colback=NavyBlue!15!white,colframe=white,#1}

\DeclareMathOperator{\spn}{span}

\def\g{\mathfrak{g}}

\def\su{\mathfrak{su}}
\def\Z{\mathbb{Z}}
\def\N{\mathbb{N}}
\def\C{\mathbb{C}}
\def\R{\mathbb{R}}
\def\Q{\mathbb{Q}}

\renewcommand{\sl}[2]{\mathfrak{sl}({#1},{#2})}
\newcommand{\Np}[1]{\N^{#1}_{\geq 0}}

\newcommand{\x}[2]{\chi_{#1}(#2)}

\newcommand{\Tr}[1]{\mathrm{Tr}\,(#1)}
\newcommand{\iF}[2]{\langle{#1},{#2}\rangle_{\mathrm{IF}}}

\newcommand{\ket}[1]{\ensuremath{| #1 \rangle}{}}
\newcommand{\bra}[1]{\ensuremath{\langle #1 |}{}}

\let\oldcite\cite
\renewcommand{\cite}[1]{\textup{\oldcite{#1}}}

\usepackage{bbm}
\newcommand{\unity}{\mathbbmss{1}}

\usepackage[bookmarks=false,pdfstartview={FitH}]{hyperref}
 
\usepackage{tikz,xcolor}
\usetikzlibrary{patterns,patterns.meta,decorations.pathreplacing,angles,quotes, plotmarks, arrows.meta, calc, hobby}
\usetikzlibrary{shapes.geometric}
\usetikzlibrary{cd, babel, math} 
\definecolor{lime}{HTML}{A6CE39}
\DeclareRobustCommand{\orcidicon}{%
	\begin{tikzpicture}
	\draw[lime, fill=lime] (0,0) 
	circle [radius=0.16] 
	node[white] {{\fontfamily{qag}\selectfont \tiny ID}};
	\draw[white, fill=white] (-0.0625,0.095) 
	circle [radius=0.007];
	\end{tikzpicture}
	\hspace{-2mm}
}
\foreach \x in {A, ..., Z}{%
	\expandafter\xdef\csname orcid\x\endcsname{\noexpand\href{https://orcid.org/\csname orcidauthor\x\endcsname}{\noexpand\orcidicon}}
}

\begin{document}
\title{Finite-dimensional Lie subalgebras of the universal enveloping algebra of $\sl{2}{\C}$}
\date{\today}

\author{Tim Heib\,\orcidH{}}
\email{t.heib@fz-juelich.de}	
\affiliation{Institute for Quantum Computing Analytics (PGI-12), Forschungszentrum J\"ulich, 52425 J\"ulich, Germany}
\affiliation{Theoretical Physics, Universit\"at des Saarlandes, 66123 Saarbr\"ucken, Germany}

\begin{abstract}
    In this work we provide the complete classification of all non-zero finite-dimensional complex Lie subalgebras of the universal enveloping algebra of the special linear Lie algebra $\sl{2}{\C}$. That is, we show that every such nilpotent Lie subalgebra is abelian and every non-solvable Lie subalgebra is reductive, with semisimple component isomorphic to $\sl{2}{\C}$ itself. Every non-nilpotent solvable Lie subalgebra is the direct sum of an abelian center and a member of the family of solvable Lie algebras $\mathfrak{r}(\boldsymbol{d},\boldsymbol{m})$, parameterized by a vector $\boldsymbol{d}$ whose entries are strictly increasing positive integers that have greatest common divisor one together with a positive integer vector $\boldsymbol{m}$ of the same length recording their multiplicities.
    Moreover, we discuss the corresponding classification in a real skew-hermitian setting and its relation to the realizations in the single- and two-mode Weyl algebras.
\end{abstract}
\maketitle


\section*{Introduction}\label{sec:intro}

The endeavor of classifying finite-dimensional Lie algebras initially began with the work of Sophus Lie and was significantly advanced by the classification of all complex finite-dimensional semisimple Lie algebras, first established through the works of Wilhelm Killing and Élie Cartan~\cite{Lie:1880,Lie:1891,Lie:1893,Killing:zweiter:1888,Killing:dritter:1889,Killing:vierter:1890,Cartan:1894}. The Levi-Mal'tsev theorem later provided a structural foundation for the general case by establishing that every real or complex finite-dimensional Lie algebra can be decomposed into a semidirect sum of a semisimple Lie algebra and its radical~\cite{Levi:1905,Matltsev:1942}. In a next step, one consequently needs to classify all finite-dimensional solvable Lie algebras, as well as the possible actions of semisimple Lie algebras on them. Such a general classification has not yet been found. However, for low-dimensional cases, classifications of solvable Lie algebras have been established~\cite{Bianchi:1903,Mubarakzyanov:1963,Popovych:2003}.

A complementary approach is to determine which finite-dimensional Lie algebras admit faithful realizations within a prescribed infinite-dimensional Lie algebra. For example, all finite-dimensional complex Lie algebras that can be realized as Lie subalgebras of the single-mode Weyl algebra $A_1$ have been classified up to isomorphism~\cite{TST:2006}. In the real skew-hermitian Weyl algebra $\hat{A}_1$, corresponding classification results have been established for all nilpotent and non-solvable Lie subalgebras, as well as for all Lie subalgebras containing a free Hamiltonian~\cite{Heib:2025:A1}.

Related classification problems arise in Lie-algebraic approaches to molecular structure and dynamics~\cite{Levine:1982}. In this context, the search for Schrödinger operators in universal enveloping algebras of finite-dimensional Lie algebras of first-order differential operators motivated the Lie-algebra classification efforts of Refs.~\cite{Artemio:1992,Artemio:1992:2}.

Here, we are motivated by recent work on the dimensional classification of bosonic dynamical Lie algebras, and we aim to investigate the corresponding classification problem for the universal enveloping algebra of the special linear Lie algebra $\sl{2}{\C}$~\cite{Bruschi:Xuereb:2024,Heib:2025:A1,Heib:2026:Main}. Thus, in this work, we consider the unital associative universal enveloping algebra $\mathcal{U}=U(\sl{2}{\C})$, equipped with its commutator bracket, and classify all finite-dimensional Lie subalgebras contained within it, up to isomorphism class. To be precise, we show the following classification theorem.

\begin{thmbox}[boxsep=0pt,left=1em,right=1em]{}
    \begin{theorem}[Classification result]\label{thm:classification:Usl2C}
    Let $\g$ be a non-zero finite-dimensional Lie subalgebra of $\mathcal{U}\equiv U(\sl{2}{\C})$. Then, $\g$ is isomorphic to exactly one of the following Lie algebras:
    \begin{itemize}
        \item If nilpotent: $\g\cong \C^d$ for some $d\in\N_{>0}$.
        \item If semisimple: $\g\cong\sl{2}{\C}$.
        \item If non-semisimple and non-solvable: $\g\cong\sl{2}{\C}\oplus \C^d$ for some $d\in\N_{>0}$.
        \item If solvable non-nilpotent: $\g\cong \mathfrak{r}(\boldsymbol{d},\boldsymbol{m})\oplus \C^{d_0}$ for some $\boldsymbol{d},\boldsymbol{m}\in\N_{>0}^{m}$, $m\in\N_{>0}$, and $d_0\in\N_{\geq0}$, such that $d_1<d_2<\ldots<d_m$ and $\operatorname{gcd}(d_1,\ldots,d_m)=1$.
    \end{itemize}
    \end{theorem}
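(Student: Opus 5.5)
The plan is to exploit the structure of $\mathcal{U}=U(\sl{2}{\C})$ as a filtered algebra whose associated graded algebra is the polynomial ring $S(\sl{2}{\C})\cong\C[e,f,h]$. We also use the Harish-Chandra-type decomposition $\mathcal{U}=\bigoplus_{n}\mathcal{U}_n$ into $\operatorname{ad}(h)$-weight spaces, where each $\mathcal{U}_0=Z(\mathcal{U})[h]=\C[C,h]$ is a polynomial ring in the Casimir $C$ and $h$. The weight spaces satisfy $\mathcal{U}_n=e^n\C[C,h]$ for $n>0$ and $\mathcal{U}_{-n}=f^n\C[C,h]$.

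Throughout, we use the leading-symbol map $\sigma:\mathcal{U}\setminus\{0\}\to S(\sl{2}{\C})$. This map satisfies $\sigma([x,y])=\{\sigma(x),\sigma(y)\}$ whenever the Poisson bracket is nonzero, and otherwise the degree drops. The standard consequence is that a finite-dimensional subspace closed under brackets cannot contain $x,y$ with $\deg[x,y]>\deg x+\deg y-1$. In particular, a degree-raising mechanism forbids most brackets.

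The nilpotent case comes first. Let $\g$ be nilpotent. By Engel, each $\operatorname{ad}_x$ with $x\in\g$ acts nilpotently on $\g$. We aim to show that $\operatorname{ad}_x y=0$ for all $x,y\in\g$. The key lemma is a Dixmier/Joseph-type statement: if $x\in\mathcal{U}$ is not central and $y\neq 0$, then $\operatorname{ad}_x^k y\neq0$ for all $k$ unless $[x,y]=0$, provided $\operatorname{ad}_x$ is locally nilpotent on a finite-dimensional invariant subspace. We would prove this by passing to $\operatorname{gr}$. A nonzero nilpotent derivation $\{\sigma(x),\cdot\}$ on the finite-dimensional space spanned by leading symbols of $\g$ would force a Poisson-nilpotent pair of polynomials in $\C[e,f,h]$. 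We rule this out by comparing with the Weyl algebra embedding $\mathcal{U}/(C-\lambda)\hookrightarrow$ (localization of) $A_1$, where the paper's cited classification \cite{TST:2006} gives abelian nilpotent subalgebras. Concretely, we map $\g$ into primitive quotients $U_\lambda=\mathcal{U}/(C-\lambda)$ for generic $\lambda$. These quotients embed in a localized Weyl algebra, and the intersection of the kernels over infinitely many $\lambda$ is zero. Nilpotent subalgebras of $A_1$ being abelian then forces $[\g,\g]\subseteq\bigcap_\lambda (C-\lambda)=0$.

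The same reduction handles the other cases. For the Levi decomposition $\g=\mathfrak{s}\ltimes\mathfrak{r}$, any semisimple $\mathfrak{s}$ maps injectively into $U_\lambda$ for generic $\lambda$. Known results on $A_1$ then show that $\mathfrak{s}\cong\sl{2}{\C}$, and that its radical must be central and hence abelian. To upgrade "central in each $U_\lambda$" to central in $\g$, we use the kernel-intersection argument again, giving $\sl{2}{\C}\oplus\C^d$. For realizability we take the standard $\sl{2}{\C}\subset\mathcal{U}$ and the centre $\spn\{C,C^2,\dots,C^d\}$. For the solvable non-nilpotent case, we pick a maximal torus: an element $t\in\g$ acting semisimply. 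Up to an automorphism of $\mathcal{U}$ and adding central terms, $t$ can be taken with leading part $h$, so that $\g$ decomposes into $\operatorname{ad}(h)$-weight spaces $\g\cap\mathcal{U}_n$. Degree considerations force all nonzero weights to have the same sign, say $n=d_i>0$. The abelian ideal part then consists of elements $e^{d_i}p(C,h)$, which pairwise commute only in restricted ways. The multiplicities $m_i$ record $\dim(\g\cap\mathcal{U}_{d_i})$, and rescaling $h$ yields $\gcd=1$. Finally, we must exhibit each $\mathfrak{r}(\boldsymbol{d},\boldsymbol{m})$ explicitly, using $h$ together with $e^{d_i}C^j$, $j<m_i$. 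These commute among themselves because $[e^a,e^b]=0$ and $C$ is central.

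The main obstacle is the solvable classification. There we must show that the non-toral part is abelian; note that $[e^a p(C,h),e^b q(C,h)]$ is generally nonzero and of weight $a+b$, so finite-dimensionality must kill it. We also must show that a single semisimple element can be normalized. I would first attempt this via the leading-symbol Poisson argument, showing that a nonzero bracket produces elements of unbounded degree in $\g$. Uniqueness ("exactly one") follows from computing invariants: the centre dimension, the derived algebra's weights, and the multiplicities.
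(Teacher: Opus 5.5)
Your reduction to the primitive quotients $\mathcal{U}/(C-\lambda)\mathcal{U}\hookrightarrow A_1$ rests on two claims about $A_1$ that are false.

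\textbf{Nilpotent subalgebras of $A_1$ are not abelian.} The span $\spn\{a,a^\dagger,\mathds{1}\}$ is a Heisenberg algebra. More generally, $\spn\{a^\dagger,\mathds{1},a,a^2,\ldots,a^k\}$ is non-abelian nilpotent for every $k$.

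\textbf{Radicals of non-solvable subalgebras of $A_1$ need not be central.} The six-dimensional subalgebra $\spn\{a^2,(a^\dagger)^2,a^\dagger a+aa^\dagger,a,a^\dagger,\mathds{1}\}$ of $A_1$ has Levi factor $\sl{2}{\C}$ and the non-central Heisenberg ideal $\spn\{a,a^\dagger,\mathds{1}\}$ as its radical. So neither your nilpotent case nor your ``radical is central'' step follows from results on $A_1$.

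Your auxiliary lemma is also false as stated. $\operatorname{ad}_E$ is nilpotent on the invariant subspace $\spn\{E,F,H\}$ and $E\notin\mathcal{Z}(\mathcal{U})$, yet $[E,F]=H\neq0$. The degree bound $\deg[u,v]\leq\deg u+\deg v-1$ holds for all $u,v\in\mathcal{U}$, so by itself it constrains nothing.

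The missing idea is exactly what separates $\mathcal{U}$ from $A_1$, where $\mathds{1}=[a,a^\dagger]$ is a central commutator. Centralizers of non-central elements of $\mathcal{U}$ are commutative, \emph{and} $\mathcal{U}=[\mathcal{U},\mathcal{U}]\oplus\mathcal{Z}(\mathcal{U})$, so no nonzero central element is a commutator.
\begin{itemize}
\item For a non-abelian nilpotent $\mathfrak{n}\subseteq\mathcal{U}$, the last nonzero term of the lower central series is central in $\mathfrak{n}$ and lies in $[\mathcal{U},\mathcal{U}]$. The commutative-centralizer property forces it into $\mathcal{Z}(\mathcal{U})\cap[\mathcal{U},\mathcal{U}]=\{0\}$, a contradiction.
\item For non-solvable $\g$, the quotient $\g/(\g\cap\mathcal{Z}(\mathcal{U}))$ is commutative transitive and non-solvable. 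By Klep's theorems it is semisimple and isomorphic to $\sl{2}{\C}$, which puts the radical inside $\mathcal{Z}(\mathcal{U})$.
\end{itemize}
You could run this inside each quotient $\mathcal{U}/(C-\lambda)\mathcal{U}$ and finish with your kernel-intersection argument. But that requires proving both properties there, at which point the detour is unnecessary.

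Your realizations and the invariant-based uniqueness are fine, but the solvable case is not yet an argument.

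\textbf{The starting element $t$ is not available.} You begin with $t\in\g$ acting semisimply with non-zero eigenvalues, then normalize it by an automorphism of $\mathcal{U}$ to $h$ plus central terms. The existence of such $t$ is exactly what must be proved. In the solvable algebra with $[x,y]=y$, $[x,z]=z+y$, $[y,z]=0$, every non-nilpotent $\operatorname{ad}$ has a Jordan block. The normalization step is a Dixmier-type conjugacy theorem for $\mathcal{U}$ that you neither prove nor need.

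\textbf{Two further steps are left open.} You do not justify the same-sign claim for the weights, or the commutativity of the eigenvector part. The latter is immediate once the nilpotent case is correct, since $\mathcal{D}\g$ is nilpotent.

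\textbf{How the paper handles this.} It works directly with any $x$ whose $\operatorname{ad}_x$ is non-nilpotent.
\begin{itemize}
\item Suppose commuting $y,z\in\mathcal{D}\g$ form a Jordan block at $\lambda\neq0$, or are eigenvectors with $\lambda/\mu\notin\Q$. Then the monomials $C^{n_1}x^{n_2}y^{n_3}z^{n_4}$ are linearly independent: they are generalized eigenvectors to distinct eigenvalues, and $p(C,x)=0$ forces $p=0$. This gives $\binom{N+4}{4}$ independent elements in a PBW filtration piece of dimension $\binom{dN+3}{3}$, which is impossible for large $N$.
\item A Jordan block at $0$, or a negative ratio with $\lambda r+\mu s=0$, produces a nonzero central commutator: $[x,z]$ in the first case, $y^rz^s=\tfrac{1}{s\mu}[xy^r,z^s]$ in the second. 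Both contradict the two properties above.
\item Finally, the commutative-centralizer property yields $\g_0=\spn\{x\}\oplus\mathcal{Z}(\g)$.
\end{itemize}
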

\end{thmbox}

In Section~\ref{sec:def:and:conventions}, we recall the essential definitions, conventions, notation, and structural lemmata. In Section~\ref{sec:main:proof}, we provide the proof of the classification theorem above, by splitting the discussion into the nilpotent, semisimple, non-solvable, and non-nilpotent solvable settings, where we show that any such Lie subalgebra of $\mathcal{U}$ must be isomorphic to one of the classes stated above. Then, we show that, for each case, there exists at least one faithful realization as a Lie subalgebra of $\mathcal{U}$. We equip $\mathcal{U}$, furthermore, with a conjugate-linear anti-involution allowing us to investigate a real skew-hermitian case. In Section~\ref{sec:discussion:applications}, we discuss the consequences of the classifications of all finite-dimensional Lie subalgebras of $\mathcal{U}$ with respect to realizations of these Lie algebras within the single- and two-mode Weyl algebras, before ending with a conclusion in Section~\ref{sec:conclusion}.

\section{Definitions and conventions}\label{sec:def:and:conventions}

Unless explicitly stated otherwise, all vector spaces, Lie algebras and linear maps are taken over $\C$. Thus, the symbol \enquote{$\spn$} denotes the complex linear span, and we write \enquote{$\spn_{\mathbb{F}}$}, if it is the span over another field such as the real numbers. We classify Lie algebras up to Lie-algebra isomorphism, denoted by \enquote{$\cong$}.

\subsection{Lie-algebra notation}\label{subsec:Lie:algebra:notation}

Let us briefly recall the central concepts of this work.


\begin{definition}[Lie algebra]
    A Lie algebra $\g$ is a vector space equipped with a bilinear skew-symmetric bracket operation $[\cdot,\cdot]:\g\times\g\to\g$, called the Lie bracket, that satisfies the Jacobi identity
    \begin{align*}
        [x,[y,z]]+[y,[z,x]]+[z,[x,y]]=0\qquad\text{for all }x,y,z\in\g.
    \end{align*}
\end{definition}

For subspaces $V,W\subseteq\g$, we write $[V,W]:=\spn\{[v,w]\,\mid\, v\in V,\,w\in W\}$. A Lie subalgebra $\mathfrak{h}\subseteq\g$ is a linear subspace closed under the Lie bracket of $\g$; an ideal $\mathfrak{i}\subseteq\g$ satisfies $[\g,\mathfrak{i}]\subseteq \mathfrak{i}$. The quotient $\g/\mathfrak{i}$ has the Lie bracket $[x+\mathfrak{i},y+\mathfrak{i}]=[x,y]+\mathfrak{i}$, with zero element $\Bar{0}:=0+\mathfrak{i}$.

We further use the following definitions.


\begin{definition}\label{def:lie:algebra:concepts}
     Let $\g$ and $\mathfrak{h}$ be two Lie algebras and $V$ be a vector space. Then we define:
     \begin{enumerate}[label = (\alph*)]
         \item \emph{Lie-algebra homomorphism}. A Lie-algebra homomorphism $\varphi:\g\to \mathfrak{h}$ is a linear map that preserves the bracket structure, i.e., satisfies $\varphi([x,y])=[\varphi(x),\varphi(y)]$ for all $x,y\in \g$. It is an isomorphism if it is bijective, and an embedding if it is injective. In the latter case, we write $\varphi:\g\hookrightarrow\mathfrak{h}$.
         \item \emph{Representation}. A Lie-algebra homomorphism $\rho:\g\to\operatorname{End}(V)$, where $\operatorname{End}(V)$ is equipped with the commutator bracket, is called a representation. If $\rho$ is injective it is called faithful.

         The adjoint representation is the map $\operatorname{ad}_{(\cdot)}:\g\to \operatorname{End}(\g)$, where, for an element $x\in \g$, it is defined by $\operatorname{ad}_x(y):=[x,y]$.
         \item \emph{Centralizer and center}. Let $S\subseteq \g$. The centralizer of $S$ in $\g$ is the set $C_\g(S):=\{x\in\g\,\mid\,[x,s]=0\,\text{for all}\,s\in S\}$. We call the centralizer of $\g$ in $\g$ itself the center and denote it by $\mathcal{Z}(\g)=C_\g(\g)$. For a singleton $S=\{x\}$, we write $C_\g(x)\equiv C_\g(\{x\})$.
         \item \emph{Derived series and solvability}. The derived series of $\g$ is recursively defined via $\mathcal{D}^0\g:=\g$, $\mathcal{D}^{\ell+1}\g:=[\mathcal{D}^\ell\g,\mathcal{D}^\ell\g]$ for $\ell\in\Np{}$. The Lie algebra is solvable if the derived series terminates, i.e., if there exists some $k\in\Np{}$ such that $\mathcal{D}^k\g=\{0\}$. We abbreviate $\mathcal{D}\g\equiv\mathcal{D}^1\g=[\g,\g]$ for the first derived algebra. 
         \item \emph{Lower central series and nilpotency}. The lower central series of $\g$ is recursively defined via $\mathcal{C}^0\g:=\g$, $\mathcal{C}^{\ell+1}\g:=[\g,\mathcal{C}^\ell\g]$ for $\ell\in\Np{}$. The Lie algebra is nilpotent if this series terminates, i.e., if there exists some $k\in\Np{}$ such that $\mathcal{C}^k\g=\{0\}$.
     \end{enumerate}
\end{definition}

Moreover, for a finite-dimensional Lie algebra $\g$, we recall that the radical of $\g$ is its largest solvable ideal; $\g$ is simple if it is non-abelian and it has no proper nonzero ideals, while it is semisimple if its radical is trivial. We, furthermore, write $\C^d$ for the $d$-dimensional abelian Lie algebra, with the convention $\C^0\equiv\{0\}$.

We employ the method of associating labeled directed graphs with a finite-dimensional Lie algebra, developed in~\cite{Heib:2026:Graphs}, to illustrate the Lie algebras of interest and provide some insight into their internal structure. In each illustration, the lengths of arrows between distinct vertices (no loops) are proportional to the magnitude of their structure constants.


\begin{definition}[associated graph]\label{def:assocaited:graph}
    A finite-dimensional Lie algebra is minimal-graph-admissible if and only if it admits a basis $\mathcal{B}$ such that every nonzero bracket between two basis elements is a scalar multiple of one basis element. That is, for all $x,y\in\mathcal{B}$ with $[x,y]\neq0$, one has $[x,y]=\kappa z$, where $\kappa\in\C$ and $z\in\mathcal{B}$. For such a basis, the associated labeled directed graph has vertex set $V=\mathcal{B}$ and, for each $[u,v]=\kappa w\neq 0$, a directed edge from $u$ to $w$ labeled by $v$.
\end{definition}

\subsection{The universal enveloping algebra}\label{subsec:universal:enveloping:algebra}


\begin{definition}[Universal enveloping algebra]
    The universal enveloping algebra $U(\g)$ of a finite-dimensional Lie algebra $\g$ is the unital associative quotient of the tensor algebra $\bigoplus_{k\in\Np{}}\g^{\otimes k}$ by the two-sided ideal generated by
    \begin{align*}
        x\otimes y-y\otimes x-[x,y],\qquad\text{for }x,y\in\g.
    \end{align*}
\end{definition}

Equipped with the commutator bracket, the universal enveloping algebra $U(\g)$ is also a Lie algebra, which is infinite-dimensional whenever $\g$ is non-zero. Moreover, given a basis $\mathcal{B}=\{b_1,\ldots,b_n\}$ and a chosen preferred ordering of these basis elements, normal ordered monomials $b_1^{k_1}\ldots b_n^{k_n}$, with $(k_1,\ldots,k_n)\in\Np{n}$, form, according to the Poincaré--Birkhoff--Witt (PBW) theorem~\cite{PBW:Thm}, a basis of $U(\g)$.

In the following we consider $\mathcal{U}:=U(\sl{2}{\C})$, where $\sl{2}{\C}$ is the special linear Lie algebra, which is isomorphic to the matrix Lie algebra of all traceless $2\times 2$ matrices. In this work, we use the non-standard basis $\{X,Y,Z\}$ of $\sl{2}{\C}$ defined by
\begin{align*}
    [X,Y]&:=Z,\;&\;[Y,Z]&:=X,\;&\;[Z,X]&:=Y,
\end{align*}
whose real span is the compact real form $\su_\R(2)$ of $\sl{2}{\C}$~\cite{Pfeifer:2003,helgason2024differential}. This is motivated by the origin of this question stemming from the classification of finite-dimensional subalgebras of specific centralizer algebras of the skew-hermitian Weyl algebra $\hat{A}_n$~\cite{Heib:2026:Main}. Compared with the standard basis $\{H,E,F\}$, with $[H,E]=2E$, $[H,F]=-2F$, and $[E,F]=H$, used commonly throughout the literature~\cite{Kirillov:2008}, our convention can be obtained by the following identification:
\begin{align*}
    X&=\frac{1}{2}\left(E-F\right),\;&\; Y&=\frac{\mathrm{i}}{2}\left(E+F\right),\;&\;Z&=\frac{\mathrm{i}}{2}H.
\end{align*}
Illustrations of the $\sl{2}{\C}$ Lie algebra, based on both bases, can be found in Fig.~\ref{fig:sl2C:XYZ:basis}.

\begin{figure}[htpb]
    \centering
    \includegraphics[width=0.7\linewidth]{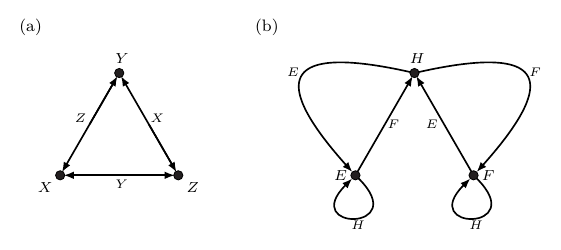}
    \caption{Illustration of the Lie algebra $\sl{2}{\C}$ using the basis $\{X,Y,Z\}$ in panel (a), whereas the more standard basis $\{H,E,F\}$ yields the graph depicted in panel (b), as per Definition~\ref{def:assocaited:graph}.}
    \label{fig:sl2C:XYZ:basis}
\end{figure}

By Racah's theorem, it holds that $\mathcal{Z}(\mathcal{U})=\C[C]$, where $C=X^2+Y^2+Z^2$, since $\sl{2}{\C}$ is a simple rank-one Lie algebra~\cite{Gould:1980,Greiner:1994,Bump:2013}. Generally, elements from the center of the universal enveloping algebra are called Casimir elements~\cite{Gould:1980}. Nevertheless, we will refer to $C$ as \enquote{the quadratic Casimir element}, or simply \enquote{the Casimir element}.

Unless stated otherwise, degree in the $\mathcal{U}$ arguments means the PBW degree: if $u=\sum_{a,b,c}\alpha_{abc}X^aY^bZ^c\neq 0$, then $\deg(u)=\max\{a+b+c\,\mid\,\alpha_{abc}\neq0\}$, and we set $\deg(0)=-\infty$, in agreement with the literature~\cite{Fischer:2005}. Moreover, we write $\unity$ for the associative unit. Products of elements in a finite-dimensional Lie subalgebra of $\mathcal{U}$ are taken in $\mathcal{U}$. If $p\in\C[a,b]$, then $p(C,x)$ denotes evaluation at the pair $(C,x)\in\mathcal{U}\times\mathcal{U}$.

\subsection{Further key definitions and conventions}\label{subsec:further:definitions}

Apart from the simple special linear Lie algebra $\sl{2}{\C}$, we must also introduce the family of solvable non-nilpotent Lie algebras relevant for the main classification result.


\begin{definition}\label{def:rnm}
    The Lie algebra $\mathfrak{r}(\boldsymbol{d},\boldsymbol{m})$ with $\boldsymbol{d},\boldsymbol{m}\in \N_{>0}^{m}$ for some $m\in\N_{>0}$, where the components $d_j$ are pairwise distinct, is defined  as the Lie algebra with basis $\{x, y_{jk_j}\,\mid\,j\in \{1,\ldots,m\},\, k_j\in\{1,\ldots,m_j\}\}$ satisfying the following Lie bracket relations
    \begin{align*}
        [x, y_{jk_j}]&=d_j y_{jk_j},\;&\;[y_{jk_j}, y_{\ell k_\ell}]&=0,
    \end{align*}
    for all admissible indices.
\end{definition}

It is straightforward to verify that the brackets in Definition~\ref{def:rnm} satisfy the Jacobi identity and therefore define a Lie algebra. An exemplary illustration of such a Lie algebra can be found in Fig.~\ref{fig:rdm}.

\begin{figure}[htpb]
    \centering
    \includegraphics[width=0.75\linewidth]{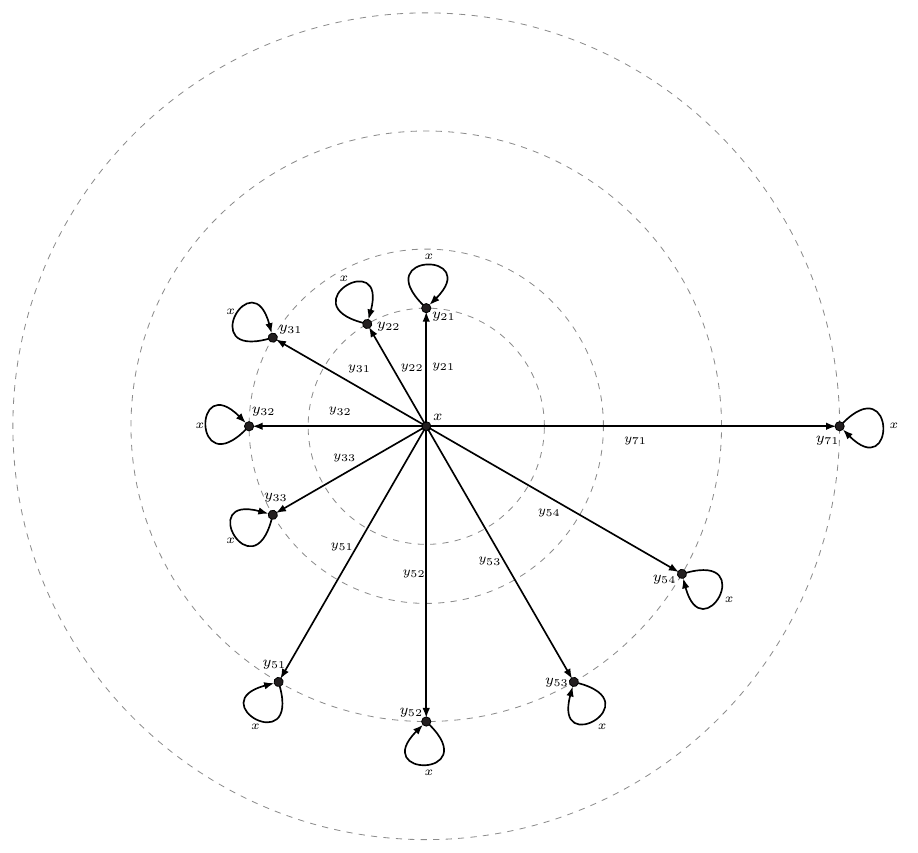}
    \caption{An exemplary illustration of the solvable Lie algebra $\mathfrak{r}((2,3,5,7),(2,3,4,1))$ via its associated labeled directed graph, as per Definition~\ref{def:assocaited:graph}, using the convention to scale the non-loop edges according to the magnitude of the structure constants corresponding to the encoded bracket. Here, the label $y_{d_j k}$ denotes the basis element $y_{jk}$ from Definition~\ref{def:rnm}.}
    \label{fig:rdm}
\end{figure}

Let us also introduce key properties for the subsequent analysis.


\begin{definition}[abelian centralizer condition]
    We say a Lie algebra $\g$ satisfies the \emph{abelian centralizer condition} (acc) if and only if the centralizer $C_\g(x)$ is abelian for every $x\notin\mathcal{Z}(\g)$.
\end{definition}


\begin{definition}[commutative transitive]
    We say a Lie algebra $\g$ is \emph{commutative transitive} (ct) if and only if the centralizer $C_\g(x)$ is abelian for every $x\in\g\setminus\{0\}$.
\end{definition}

Clearly, a ct Lie algebra satisfies the acc. Conversely, the acc implies ct when $\mathcal{Z}(\g)=\{0\}$. A non-abelian ct Lie algebra has zero center, since the centralizer of any non-zero central element would be the whole Lie algebra. Thus an acc statement for $\mathcal{U}$ is not a ct statement for $\mathcal{U}$ or for an arbitrary quotient; the required quotient argument is given below.

\subsection{Preliminary structural results}

First, we want to establish the isomorphism classes of the Lie algebras introduced in Definition~\ref{def:rnm}.


\begin{lemma}\label{lem:rdm:isomorphism:classes}
    Let $m,m'\in \N_{>0}$, $\boldsymbol{m},\boldsymbol{d}\in\N_{>0}^m$ and $\boldsymbol{m}',\boldsymbol{d}'\in \N_{>0}^{m'}$ such that $d_1,\ldots, d_m$ and  $d_1',\ldots,d_{m'}'$ are both pairwise distinct. Then it holds that $\mathfrak{r}(\boldsymbol{d},\boldsymbol{m})\cong \mathfrak{r}(\boldsymbol{d}',\boldsymbol{m}')$ if and only if $m=m'$ and there exists a positive rational constant $\kappa\in\Q_{>0}$ and a permutation $\sigma\in S_m$ such that $d_j=\kappa d_{\sigma(j)}'$ and $m_j=m_{\sigma(j)}'$ for all $j\in\{1,\ldots,m\}$. 
\end{lemma}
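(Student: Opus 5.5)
The plan is to compare canonical invariants attached to $\mathfrak{r}(\boldsymbol{d},\boldsymbol{m})$: the derived algebra, and the spectrum of $\operatorname{ad}_x$ on it. For the converse direction we write down an explicit isomorphism.

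Write $\g=\mathfrak{r}(\boldsymbol{d},\boldsymbol{m})$ and $\mathfrak{a}=\spn\{y_{jk_j}\}$. Since every $d_j\neq0$, we have $\mathcal{D}\g=[\g,\g]=\mathfrak{a}$, an abelian ideal of codimension one. Also $\operatorname{ad}_x|_{\mathfrak{a}}$ is diagonalizable, with eigenvalue $d_j$ of multiplicity $m_j$. The same holds for $\g'=\mathfrak{r}(\boldsymbol{d}',\boldsymbol{m}')$ with $\mathfrak{a}'$ and $x'$.

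For the forward direction, let $\varphi:\g\to\g'$ be an isomorphism. Since derived algebras are preserved, $\varphi(\mathfrak{a})=\mathfrak{a}'$. As $x\notin\mathfrak{a}$, we get $\varphi(x)=c\,x'+a_0$ for some $c\in\C\setminus\{0\}$ and $a_0\in\mathfrak{a}'$. Because $\mathfrak{a}'$ is abelian, $\operatorname{ad}_{\varphi(x)}|_{\mathfrak{a}'}=c\,\operatorname{ad}_{x'}|_{\mathfrak{a}'}$. Moreover $\varphi|_{\mathfrak{a}}$ intertwines $\operatorname{ad}_x|_{\mathfrak{a}}$ with $\operatorname{ad}_{\varphi(x)}|_{\mathfrak{a}'}$. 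Hence the eigenvalue multisets agree:
\begin{align*}
\{d_j \text{ with mult. } m_j\}=\{c\,d'_\ell \text{ with mult. } m'_\ell\}.
\end{align*}
The $d_j$ are pairwise distinct, and so are the $c\,d'_\ell$. Comparing distinct eigenvalues therefore gives $m=m'$ and a permutation $\sigma\in S_m$ with $d_j=c\,d'_{\sigma(j)}$ and $m_j=m'_{\sigma(j)}$. Finally, $c=d_1/d'_{\sigma(1)}$ is a quotient of positive integers, so $\kappa:=c\in\Q_{>0}$.

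For the converse, define $\varphi(x)=\kappa x'$ and $\varphi(y_{jk})=y'_{\sigma(j)k}$. This is well defined because $m_j=m'_{\sigma(j)}$, and it is bijective on bases. We check the brackets:
\begin{align*}
[\varphi(x),\varphi(y_{jk})]=\kappa d'_{\sigma(j)}y'_{\sigma(j)k}=d_j\,\varphi(y_{jk}).
\end{align*}
The brackets among the $y$'s vanish on both sides. So $\varphi$ is an isomorphism.

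The only point needing care is the first step: showing that an arbitrary isomorphism, and not merely one adapted to the bases, preserves the spectrum up to a single scalar. This is handled by the characteristic ideal $\mathcal{D}\g$ together with the observation that adding an element of the abelian ideal $\mathfrak{a}'$ to $c\,x'$ does not change its action on $\mathfrak{a}'$.
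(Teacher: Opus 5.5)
Your proposal is correct and follows essentially the same route as the paper: both show that the isomorphism maps the abelian derived algebra $\spn\{y_{jk_j}\}$ onto its counterpart, write $\varphi(x)=\kappa x'+r'$ with $r'$ in the derived algebra, compare the spectra of the restricted adjoint maps, and finish the converse with the same explicit map. The differences are only in presentation. You get $c\neq 0$ directly from injectivity and $\varphi(\mathfrak{a})=\mathfrak{a}'$, where the paper uses a contradiction argument, and you match eigenvalues and multiplicities through the similarity of $\operatorname{ad}_x|_{\mathfrak{a}}$ and $c\,\operatorname{ad}_{x'}|_{\mathfrak{a}'}$ rather than proving $\varphi(\g_{d_j})=\g'_{\kappa d'_{j'}}$ by two inclusions.
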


\begin{proof}
    For convenience, let us write $\g\equiv \mathfrak{r}(\boldsymbol{d},\boldsymbol{m})$ and $\g'\equiv \mathfrak{r}(\boldsymbol{d}',\boldsymbol{m}')$, and denote their respective bases by $\mathcal{B}:=\{x,y_{jk_j}\,\mid\,j\in \{1,\ldots,m\},k_j\in\{1,\ldots,m_j\}\}$ and $\mathcal{B}':=\{x',y_{jk_j}'\,\mid\,j\in \{1,\ldots,m'\},k_j\in\{1,\ldots,m_j'\}\}$, respectively. Now suppose $\varphi:\g\to\g'$ is a Lie-algebra isomorphism. This implies that both Lie algebras must have the same dimension, i.e., $1+\sum_{j=1}^mm_j=1+\sum_{j=1}^{m'}m_j'$. Moreover, by construction, it holds for the derived algebras that $\mathcal{D}\g=\spn\{\mathcal{B}\setminus\{x\}\}$ and $\mathcal{D}\g'=\spn\{\mathcal{B}'\setminus\{x'\}\}$. Indeed, every defining bracket lies in $\spn\{\mathcal{B}\setminus\{x\}\}$. Hence, by bilinearity, $\mathcal{D}\g\subseteq\spn\{\mathcal{B}\setminus\{x\}\}$. Conversely, since each $d_j$ is nonzero, it holds that every $y_{jk_j}=d_j^{-1}[x,y_{jk_j}]\in \mathcal{D}\g$ for all admissible $j$ and $k_j$. Thus, $\mathcal{D}\g=\spn\{\mathcal{B}\setminus\{x\}\}$. The same argument holds for $\g'$. Note that, since $[y_{jk_j},y_{\ell k_{\ell}}]=0$, for all admissible indices $j,\ell,k_j,k_\ell$, this also implies that both $\mathcal{D}\g$ and $\mathcal{D}\g'$ are abelian.

    Since $\varphi$ is a Lie-algebra-isomorphism, and therefore preserves the bracket structure, while also being surjective, it holds that $\varphi(\mathcal{D}\g)=\mathcal{D}\g'$. This implies that $\varphi(y_{jk_j})\in \mathcal{D}\g'$ for all admissible $j,k_j$. We may now write $\varphi(x)=\kappa x'+r'$, where $\kappa\in \C$ and $r'\in \mathcal{D}\g'$. If $\kappa=0$ it would hold that $\varphi(y_{jk_j})=d_j^{-1}\varphi([x,y_{jk_j}])=d_j^{-1}[r',\varphi(y_{jk_j})]=0$ for all admissible $j,k_j$, and therefore $\varphi(\mathcal{D}\g)=\{0\}$, which contradicts the surjectivity, since $\dim(\mathcal{D}\g')=\sum_{j=1}^{m'}m_j'>0$. Thus, $\kappa\neq 0$. Next, we observe that, by construction
    \begin{align*}
        \g&=\spn\{x\}\oplus\bigoplus_{j=1}^m\g_{d_j},\qquad\text{with}\qquad\g_{d_j}=\spn\{y_{j k_j}\,\mid\, k_j\in\{1,\ldots,m_j\}\}=\{ v\in \g\,\mid\,[x,v]=d_j v\}.
    \end{align*}
    Similarly, it also holds that
    \begin{align*}
        \g'&=\spn\{\kappa x'\}\oplus\bigoplus_{j=1}^{m'}\g_{\kappa d_j'}',\qquad\text{with}\qquad\g_{\kappa d_j'}'=\spn\{y_{j k_j}'\,\mid\, k_j\in\{1,\ldots,m_j'\}\}=\{ v'\in \g'\,\mid\,[\kappa x',v']=\kappa d_j' v'\}.
    \end{align*}
    Let now $v\in \g_{d_j}\setminus\{0\}$ for some $j\in\{1,\ldots,m\}$, then
    \begin{align*}
        \varphi(v)=\frac{1}{d_j}\varphi([x,v])=\frac{1}{d_j}[\varphi(x),\varphi(v)]=\frac{1}{d_j}[\kappa x',\varphi(v)],
    \end{align*}
    since $d_j\neq 0$ and $r',\varphi(v)\in \mathcal{D}\g'$ and therefore $[r',\varphi(v)]=0$. It is furthermore essential to note that $\varphi(v)\neq 0$, since $\varphi$ is a Lie-algebra isomorphism and therefore $\varphi(v)=0$ if and only if $v=0$. This shows that $\varphi(v)$ is a non-zero eigenvector of $\operatorname{ad}_{\kappa x'}$ to eigenvalue $d_j$. The non-zero eigenvalues of $\operatorname{ad}_{\kappa x'}$ are $\kappa d_{1}',\ldots,\kappa d_{m'}'$. Thus, it must hold that $d_j=\kappa d_{j'}'$ for some $j'\in\{1,\ldots,m'\}$. Moreover, it holds that $\varphi(\g_{d_j})\subseteq \g_{d_j}'=\g_{\kappa d_{j'}'}'$, since also $\varphi(0)=0$. Conversely, let $u'\in \g_{d_j}'=\g_{\kappa d_{j'}'}'$. Then, due to the Lie-algebra isomorphism $\varphi|_{\mathcal{D}\g}:\mathcal{D}\g\to\mathcal{D}\g'$, there exists some $u\in\mathcal{D}\g$ such that $\varphi(u)=u'$. It now holds that
    \begin{align*}
        \varphi(u)=u'=\frac{1}{d_j}[\kappa x',u']=\frac{1}{d_j}[\kappa x'+r',u']=\frac{1}{d_j}[\varphi(x),\varphi(u)].
    \end{align*}
    Injectivity now implies that $[x,u]=d_j u$, hence $u\in \g_{d_j}$. It consequently follows that $\varphi(\g_{d_j})\supseteq \g_{d_j}'=\g_{\kappa d_{j'}'}'$ and therefore $\varphi(\g_{d_j})= \g_{d_j}'=\g_{\kappa d_{j'}'}'$.

    This holds for all $j\in\{1,\ldots,m\}$, which also shows that distinct $d_j$ are matched with distinct $\kappa d_{j'}'$. Because $\varphi(\mathcal{D}\g)=\mathcal{D}\g'$ and all target blocks have the same non-zero dimension as the source blocks, every target block must occur in this matching, otherwise that block would be absent from the sum of the images. Thus $m=m'$ and there exists a one-to-one map $\sigma:\{1,\ldots,m\}\to\{1,\ldots,m'\}$, such that $j'=\sigma(j)$. Moreover, matching the dimensions of the individual blocks yields $m_j=\dim(\g_{d_j})=\dim(\g_{\kappa d_{\sigma(j)}'}')=m_{\sigma(j)}'$. Finally, since $d_j=\kappa d_{\sigma(j)}'$, where both $d_j$ and $d_{\sigma(j)}'$ are positive integers, it holds that $\kappa\in \Q_{>0}$. This shows that if $\g\cong\g'$, then $m=m'$ and there exists a positive rational constant $\kappa\in\Q_{>0}$ and a permutation $\sigma\in S_m$ such that $d_j=\kappa d_{\sigma(j)}'$ and $m_j=m_{\sigma(j)}'$ for all $j\in\{1,\ldots,m\}$.

    Let us now consider the converse direction. Then, we define the linear map $\psi:\g\to\g'$ by setting $\psi(x)=\kappa x'$ and $\psi(y_{jk})=y_{\sigma(j)k}'$ for $k\in\{1,\ldots,m_j=m_{\sigma(j)}'\}$. This map is clearly invertible, since $\kappa\neq 0$, $\sigma$ is a permutation, and the matched multiplicities agree, which makes $\psi$ a bijection between the two Lie algebras $\g$ and $\g'$. It holds furthermore, for all non-trivial Lie brackets,
    \begin{align*}
        \psi([x,y_{jk}])=d_j\psi(y_{j k})=\kappa d_{\sigma(j)}' y_{\sigma(j)k}'=[\kappa x' , y_{\sigma(j)k}']=[\psi(x),\psi(y_{jk})],
    \end{align*}
    while all Lie brackets between images of the $y$ basis elements vanish, as do their source brackets.
    Bilinearity of the Lie brackets now proves that $\psi$ is a Lie-algebra isomorphism between $\g$ and $\g'$.
\end{proof}

Thus, ordering and normalizing $\boldsymbol{d}$ in Definition~\ref{def:rnm} yields a unique parameter pair for each isomorphism class of a Lie algebra $\mathfrak{r}(\boldsymbol{d},\boldsymbol{m})$.

\begin{corollary}\label{cor:rdm:Cd0:isomorphism:uniqueness}
    Let $d_0,d_0'\in\Np{}$, $m,m'\in \N_{>0}$, $\boldsymbol{m},\boldsymbol{d}\in\N_{>0}^m$ and $\boldsymbol{m}',\boldsymbol{d}'\in \N_{>0}^{m'}$ such that $d_1<d_2<\ldots<d_m$, $d_1'<d_2'<\ldots<d_{m'}'$, and $\operatorname{gcd}(d_1,\ldots,d_m)=1=\operatorname{gcd}(d_1',\ldots,d_{m'}')$. Then it holds that $\mathfrak{r}(\boldsymbol{d},\boldsymbol{m})\oplus\C^{d_0}\cong \mathfrak{r}(\boldsymbol{d}',\boldsymbol{m}')\oplus\C^{d_0'}$ if and only if $d_0=d_0'$, $m=m'$, $\boldsymbol{d}=\boldsymbol{d}'$, and $\boldsymbol{m}=\boldsymbol{m}'$. 
\end{corollary}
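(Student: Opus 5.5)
The "if" direction is immediate, since equal parameters give literally the same Lie algebra. For the converse, let $\g:=\mathfrak{r}(\boldsymbol{d},\boldsymbol{m})\oplus\C^{d_0}$ and $\g':=\mathfrak{r}(\boldsymbol{d}',\boldsymbol{m}')\oplus\C^{d_0'}$, and let $\varphi:\g\to\g'$ be an isomorphism. The plan is to recover each summand intrinsically from $\g$, so that $\varphi$ respects the splitting. Then we apply Lemma~\ref{lem:rdm:isomorphism:classes} to the $\mathfrak{r}$-parts and finish with the normalization by ordering and $\gcd$.

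\textbf{Step 1: the center.} We compute $\mathcal{Z}(\g)=\{0\}\oplus\C^{d_0}$. Write $z=\alpha x+\sum\beta_{jk}y_{jk}+c$. Then $[y_{jk},z]=-\alpha d_jy_{jk}$, so $\alpha=0$ because $d_j\neq0$ and $m\geq1$. Next, $[x,z]=\sum\beta_{jk}d_jy_{jk}$ forces all $\beta_{jk}=0$. Hence $\mathcal{Z}(\mathfrak{r}(\boldsymbol{d},\boldsymbol{m}))=\{0\}$ and $\mathcal{Z}(\g)=\C^{d_0}$. Isomorphisms preserve centers, so $d_0=\dim\mathcal{Z}(\g)=\dim\mathcal{Z}(\g')=d_0'$.

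\textbf{Step 2: quotient by the center.} The isomorphism $\varphi$ maps $\mathcal{Z}(\g)$ onto $\mathcal{Z}(\g')$, so it induces an isomorphism
\begin{align*}
\mathfrak{r}(\boldsymbol{d},\boldsymbol{m})\cong\g/\mathcal{Z}(\g)\cong\g'/\mathcal{Z}(\g')\cong\mathfrak{r}(\boldsymbol{d}',\boldsymbol{m}').
\end{align*}
This avoids having to show that $\varphi$ maps the $\mathfrak{r}$-summand onto the $\mathfrak{r}$-summand. Since both $\boldsymbol{d}$ and $\boldsymbol{d}'$ have pairwise distinct entries, Lemma~\ref{lem:rdm:isomorphism:classes} applies. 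It gives $m=m'$, a constant $\kappa\in\Q_{>0}$, and $\sigma\in S_m$ with $d_j=\kappa d'_{\sigma(j)}$ and $m_j=m'_{\sigma(j)}$.

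\textbf{Step 3: normalization, the only real content.} Since $\kappa>0$, the map $t\mapsto\kappa t$ is strictly increasing. Both sequences are strictly increasing, and $\{d_j\}=\kappa\{d'_j\}$ as sets, so $\sigma$ must be the order-preserving bijection, i.e.\ the identity. Therefore $d_j=\kappa d'_j$ and $m_j=m'_j$ for all $j$. Write $\kappa=p/q$ in lowest terms with $p,q\in\N_{>0}$. Then $qd_j=pd'_j$ for all $j$. Since $\gcd(p,q)=1$, $q$ divides every $d'_j$, so $q$ divides $\gcd(d'_1,\ldots,d'_m)=1$. Symmetrically, $p$ divides every $d_j$, so $p=1$. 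Hence $\kappa=1$, giving $\boldsymbol{d}=\boldsymbol{d}'$ and $\boldsymbol{m}=\boldsymbol{m}'$.

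I expect no serious obstacle. The only points needing care are verifying that $\mathfrak{r}(\boldsymbol{d},\boldsymbol{m})$ is centerless, which uses $m\geq1$ and $d_j\neq0$, and the divisibility argument showing $\kappa=1$.
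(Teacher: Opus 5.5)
Your proof is correct and follows the paper's argument: you show $\mathfrak{r}(\boldsymbol{d},\boldsymbol{m})$ is centerless, pass to the quotient by the center to get $d_0=d_0'$ and $\mathfrak{r}(\boldsymbol{d},\boldsymbol{m})\cong\mathfrak{r}(\boldsymbol{d}',\boldsymbol{m}')$, apply Lemma~\ref{lem:rdm:isomorphism:classes}, force $\sigma=\mathrm{id}$ by the ordering, and use the gcd condition to get $\kappa=1$. The only cosmetic difference is in that last step: the paper uses gcd homogeneity, $b=\gcd(bd_1,\ldots,bd_m)=\gcd(ad_1',\ldots,ad_m')=a$, where you use a coprime-divisibility argument.
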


\begin{proof}
    Let us first consider the case $d_0=d_0'=0$. Then the statement is an immediate consequence of Lemma~\ref{lem:rdm:isomorphism:classes}, as $\mathfrak{r}(\boldsymbol{d},\boldsymbol{m})\cong \mathfrak{r}(\boldsymbol{d}',\boldsymbol{m}')$ if and only if $m=m'$ and there exists a positive rational constant $\kappa\in\Q_{>0}$ and a permutation $\sigma\in S_m$ such that $d_j=\kappa d_{\sigma(j)}'$ and $m_j=m_{\sigma(j)}'$ for all $j\in\{1,\ldots,m\}$. Since $\kappa>0$, and $d_1<d_2<\ldots<d_m$, as well as $d_1'<d_2'<\ldots<d_{m'}'=d_m'$, it must hold that $\sigma=\operatorname{id}_{\{1,\ldots,m\}}$ is the identity permutation on the set $\{1,\ldots,m\}$. We may write $\kappa= a/b$, where $a,b\in\N_{>0}$. Then, since $d_j=\kappa d_j'$ and therefore $b d_j= a d_{j}'$, it holds that
    \begin{align*}
        b=b\operatorname{gcd}(d_1,\ldots,d_m)=\operatorname{gcd}(bd_1,\ldots,bd_m)=\operatorname{gcd}(ad_1',\ldots,a d_m')=a \operatorname{gcd}(d_1',\ldots,d_m')=a.
    \end{align*}
    Thus, $\kappa=1$, and hence $\mathfrak{r}(\boldsymbol{d},\boldsymbol{m})\cong \mathfrak{r}(\boldsymbol{d}',\boldsymbol{m}')$ if and only if $m=m'$, $\boldsymbol{d}=\boldsymbol{d}'$, and $\boldsymbol{m}=\boldsymbol{m}'$. 

    Let now $d_0,d_0'\in\Np{}$. It is straightforward to observe that, by construction, $\mathcal{Z}(\mathfrak{r}(\boldsymbol{d},\boldsymbol{m}))=\{0\}$. Thus, $\mathcal{Z}(\mathfrak{r}(\boldsymbol{d},\boldsymbol{m})\oplus\C^{d_0})=\C^{d_0}$, and $(\mathfrak{r}(\boldsymbol{d},\boldsymbol{m})\oplus\C^{d_0})/\mathcal{Z}(\mathfrak{r}(\boldsymbol{d},\boldsymbol{m})\oplus\C^{d_0})\cong\mathfrak{r}(\boldsymbol{d},\boldsymbol{m})$. The same argument applies to $\mathfrak{r}(\boldsymbol{d}',\boldsymbol{m}')\oplus\C^{d_0'}$. An isomorphism preserves the centers and induces an isomorphism between the corresponding quotients. Thus, $\mathfrak{r}(\boldsymbol{d},\boldsymbol{m})\oplus\C^{d_0}\cong \mathfrak{r}(\boldsymbol{d}',\boldsymbol{m}')\oplus\C^{d_0'}$ implies that $\mathfrak{r}(\boldsymbol{d},\boldsymbol{m})\cong \mathfrak{r}(\boldsymbol{d}',\boldsymbol{m}')$ and $d_0=d_0'$. Conversely, an isomorphism between $\mathfrak{r}(\boldsymbol{d},\boldsymbol{m})$ and $\mathfrak{r}(\boldsymbol{d}',\boldsymbol{m}')$ together with $d_0=d_0'$ yields an isomorphism of the direct sums. The claim now follows from the case $d_0=d_0'=0$.
\end{proof}

Note that existence of the parameter choices in Corollary~\ref{cor:rdm:Cd0:isomorphism:uniqueness} above is given by the following observation: Let $\mathfrak{r}(\boldsymbol{d},\boldsymbol{m})$ with $m\in\N_{>0}$, $\boldsymbol{d},\boldsymbol{m}\in \N_{>0}^m$ be as defined earlier. Set $g:=\operatorname{gcd}(d_1,\ldots,d_m)$, and choose $\sigma\in S_m$ such that $d_{\sigma(1)}<d_{\sigma(2)}<\ldots<d_{\sigma(m)}$. Then, the assignments $d_j':= d_{\sigma(j)}/g$ and $m_j':= m_{\sigma(j)}$ along with the replacement $x':=x/g$ and the relabeling of the remaining basis elements as $y_{jk}':= y_{\sigma(j)k}$ yield for the only non-trivial Lie brackets $[x',y_{jk}']=d_j' y_{jk}'$ while the brackets between the $y'$ elements remain unchanged. Hence $\mathfrak{r}(\boldsymbol{d},\boldsymbol{m})\cong \mathfrak{r}(\boldsymbol{d}',\boldsymbol{m}')$, with increasing $d_j'$ of $\operatorname{gcd}$ 1. Their uniqueness follows from the corollary above.

Next, let us obtain some structural consequences that hold for Lie algebras that satisfy the acc or are ct.


\begin{lemma}\label{lem:subalgebra:acc}
    Let $\mathfrak{l}$ be a non-abelian Lie algebra that satisfies the acc and satisfies $\mathfrak{l}=[\mathfrak{l},\mathfrak{l}]\oplus\mathcal{Z}(\mathfrak{l})$. Furthermore, let $\g$ be a non-abelian Lie subalgebra of $\mathfrak{l}$. Then $\g$ satisfies the acc and $\mathfrak{h}=\g/(\g\cap \mathcal{Z}(\mathfrak{l}))$ is ct.
\end{lemma}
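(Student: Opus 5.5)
The plan is to pass everything through the ambient algebra $\mathfrak{l}$. Two elementary observations will be used. First, $\g\cap\mathcal{Z}(\mathfrak{l})\subseteq\mathcal{Z}(\g)$, since an element commuting with all of $\mathfrak{l}$ in particular commutes with all of $\g$. Second, the direct-sum hypothesis $\mathfrak{l}=[\mathfrak{l},\mathfrak{l}]\oplus\mathcal{Z}(\mathfrak{l})$ gives $[\mathfrak{l},\mathfrak{l}]\cap\mathcal{Z}(\mathfrak{l})=\{0\}$. By the first observation, $\mathfrak{i}:=\g\cap\mathcal{Z}(\mathfrak{l})$ is a central, hence ideal, subspace of $\g$, so the quotient $\mathfrak{h}=\g/\mathfrak{i}$ is a well-defined Lie algebra.

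\emph{The acc for $\g$.} Let $x\in\g\setminus\mathcal{Z}(\g)$. By the first observation, $x\notin\mathcal{Z}(\mathfrak{l})$. The acc of $\mathfrak{l}$ therefore gives that $C_{\mathfrak{l}}(x)$ is abelian. Since $C_\g(x)=C_{\mathfrak{l}}(x)\cap\g\subseteq C_{\mathfrak{l}}(x)$, the centralizer $C_\g(x)$ is abelian as well. This establishes the acc for $\g$.

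\emph{Commutative transitivity of $\mathfrak{h}$.} Let $\bar x=x+\mathfrak{i}\neq\bar 0$, so that $x\in\g$ with $x\notin\mathcal{Z}(\mathfrak{l})$. Take $\bar y,\bar z\in C_{\mathfrak{h}}(\bar x)$ with representatives $y,z\in\g$. The condition $[\bar x,\bar y]=\bar 0$ means $[x,y]\in\mathfrak{i}\subseteq\mathcal{Z}(\mathfrak{l})$. On the other hand, $[x,y]\in[\mathfrak{l},\mathfrak{l}]$, so the second observation forces $[x,y]=0$, and likewise $[x,z]=0$. Hence $y,z\in C_{\mathfrak{l}}(x)$. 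Because $x\notin\mathcal{Z}(\mathfrak{l})$, this centralizer is abelian by the acc, so $[y,z]=0$ and in particular $[\bar y,\bar z]=\bar 0$. Thus $C_{\mathfrak{h}}(\bar x)$ is abelian for every $\bar x\neq\bar 0$, i.e.\ $\mathfrak{h}$ is ct.

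The only step that is not pure bookkeeping is lifting the quotient condition $[\bar x,\bar y]=\bar 0$ to the honest commutation $[x,y]=0$ in $\mathfrak{l}$. This is exactly where the hypothesis $[\mathfrak{l},\mathfrak{l}]\cap\mathcal{Z}(\mathfrak{l})=\{0\}$ is needed: without it, a bracket landing in the center could not be excluded, and the acc of $\mathfrak{l}$ could not be applied. The non-abelian assumption on $\g$ plays no essential role in the argument; it merely makes the statement non-vacuous.
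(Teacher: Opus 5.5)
Your proof is correct and follows the paper's argument essentially step for step. The acc passes to $\g$ because $C_\g(x)=\g\cap C_{\mathfrak{l}}(x)$ with $x\notin\mathcal{Z}(\mathfrak{l})$, and $\mathfrak{h}$ is ct because $[\mathfrak{l},\mathfrak{l}]\cap\mathcal{Z}(\mathfrak{l})=\{0\}$ lifts $[\bar x,\bar y]=\bar 0$ to $[x,y]=0$, after which the acc of $\mathfrak{l}$ gives $[y,z]=0$.
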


\begin{proof}
    Since $\g\subseteq\mathfrak{l}$, every $x\in \g\setminus\mathcal{Z}(\g)$ also lies outside of $\mathcal{Z}(\mathfrak{l})$. Thus $C_\g(x)=\g\cap C_{\mathfrak{l}}(x)$ is abelian by the acc for $\mathfrak{l}$, so $\g$ satisfies also the acc.
    
    Observe that $\g\cap\mathcal{Z}(\mathfrak{l})$ is a central ideal of $\g$, making the quotient $\mathfrak{h}:=\g/(\g\cap \mathcal{Z}(\mathfrak{l}))$ a Lie algebra. Let now $\Bar{x}=x+(\g\cap\mathcal{Z}(\mathfrak{l}))\in\mathfrak{h}$, with $x\in \g\setminus\mathcal{Z}(\mathfrak{l})$, and let $\Bar{y}=y+(\g\cap\mathcal{Z}(\mathfrak{l})),\Bar{z}=z+(\g\cap\mathcal{Z}(\mathfrak{l}))\in C_{\mathfrak{h}}(\Bar{x})$, with $y,z\in \g$. It follows that $[\Bar{x},\Bar{y}]=[x,y]+ (\g\cap\mathcal{Z}(\mathfrak{l}))=0+(\g\cap\mathcal{Z}(\mathfrak{l}))=\Bar{0}$ and $[\Bar{x},\Bar{z}]=[x,z]+ (\g\cap\mathcal{Z}(\mathfrak{l}))=\Bar{0}$. Thus, $[x,y],[x,z]\in (\g\cap\mathcal{Z}(\mathfrak{l}))$. However, by construction $[x,y],[x,z]\in [\mathfrak{l},\mathfrak{l}]$. Thus, the assumption that $\mathfrak{l}=[\mathfrak{l},\mathfrak{l}]\oplus\mathcal{Z}(\mathfrak{l})$ implies that $[x,y]=[x,z]=0$, i.e., that $y,z\in C_\g(x)$. The acc for $\mathfrak{l}$ now yields that $[y,z]=0$, since $x\notin \mathcal{Z}(\mathfrak{l})$. Hence, $[\Bar{y},\Bar{z}]=[y,z]+(\g\cap\mathcal{Z}(\mathfrak{l}))=0+(\g\cap\mathcal{Z}(\mathfrak{l}))=\Bar{0}$. Since $\Bar{y},\Bar{z}$ have been chosen arbitrarily, it must hold that $C_\mathfrak{h}(\Bar{x})$ is abelian, showing that $\mathfrak{h}$ is ct.
\end{proof}


\begin{lemma}\label{lem:acc:semisimple:sl2}
    Let $\g$ be a non-zero finite-dimensional semisimple Lie algebra that satisfies the acc. Then $\g$ is simple and it holds furthermore that $\g\cong\sl{2}{\C}$.
\end{lemma}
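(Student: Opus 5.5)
The plan is to first remove the center from the picture and then use Cartan subalgebras. Since $\g$ is semisimple and non-zero, $\mathcal{Z}(\g)=\{0\}$. As noted after the definitions, the acc then implies that $\g$ is ct: $C_\g(x)$ is abelian for every $x\neq 0$.

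\emph{Step 1: $\g$ is simple.} Decompose $\g=\g_1\oplus\cdots\oplus\g_k$ into simple ideals and suppose $k\geq 2$. Pick any non-zero $x\in\g_1$. Then $C_\g(x)\supseteq\g_2$, and $\g_2$ is non-abelian because it is simple. This contradicts ct, so $k=1$.

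\emph{Step 2: rank one.} Fix a Cartan subalgebra $\mathfrak{t}\subseteq\g$ with root decomposition $\g=\mathfrak{t}\oplus\bigoplus_{\alpha\in\Phi}\g_\alpha$. I will show that every root $\alpha$ spans $\mathfrak{t}^*$.
\begin{itemize}
\item Suppose $\dim\mathfrak{t}\geq 2$. Then for a given root $\alpha$ the hyperplane $\ker\alpha\subseteq\mathfrak{t}$ is non-zero, so we can choose $h\in\ker\alpha\setminus\{0\}$.
\item This $h$ commutes with $\g_\alpha$, $\g_{-\alpha}$ and $\mathfrak{t}$, so $C_\g(h)$ contains $\g_\alpha\oplus\g_{-\alpha}\oplus\mathfrak{t}$.
\item That subspace is non-abelian, since $[\g_\alpha,\g_{-\alpha}]=\C h_\alpha\neq 0$ and $[h_\alpha,\g_\alpha]\neq0$. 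This contradicts ct.
\end{itemize}
Hence $\dim\mathfrak{t}=1$. A simple Lie algebra of rank one is isomorphic to $\sl{2}{\C}$ by the Cartan--Killing classification: the root system is $A_1$, since $B_1$ and $C_1$ coincide with $A_1$.

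Alternatively, one can avoid quoting the classification: a rank-one root system is $\{\pm\alpha\}$, where $\pm2\alpha$ cannot be roots, and the root spaces are one-dimensional. So $\dim\g=3$, and the standard $\mathfrak{sl}_2$-triple gives the isomorphism.

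The main (mild) obstacle is making sure no root $\alpha$ vanishes on a non-zero element when $\operatorname{rank}\geq2$. That is immediate, because $\ker\alpha$ is a hyperplane in a space of dimension at least $2$. I would also check that $\sl{2}{\C}$ itself is consistent with the conclusion, i.e.\ that it is ct: for non-zero $x$ the centralizer $C_{\sl{2}{\C}}(x)$ is one-dimensional, since $\operatorname{ad}_x$ has rank $2$. This is not required for the implication, but it confirms the lemma is sharp.
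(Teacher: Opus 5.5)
Your proof is correct. Step~1 matches the paper's argument: the paper phrases it with the acc directly, choosing $x_j\in\g_j$ with a non-zero bracket so that $x_j$ is non-central, but since $\mathcal{Z}(\g)=\{0\}$ this is the same argument.

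The genuine difference is in identifying the simple algebra. After noting that the acc plus trivial center gives ct, the paper simply invokes Theorem~6 of~\cite{Klep:2010}. You instead prove the needed special case yourself via the root decomposition. For $\dim\mathfrak{t}\geq 2$, a non-zero $h\in\ker\alpha$ has centralizer containing the non-abelian subspace $\mathfrak{t}\oplus\g_\alpha\oplus\g_{-\alpha}$. So the rank is one, which forces $\g=\mathfrak{t}\oplus\g_\alpha\oplus\g_{-\alpha}\cong\sl{2}{\C}$. The paper's route is shorter because it defers to the literature. Yours is self-contained and uses only standard structure theory over an algebraically closed field of characteristic zero. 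That suffices here, since the lemma is only applied over $\C$; the real skew-hermitian case is handled by complexification.

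Two small remarks. First, you tacitly need $\Phi\neq\emptyset$; this holds because $\g$ is non-abelian. Second, your Step~2 never uses simplicity. The root decomposition exists for any semisimple $\g$, and a rank-one semisimple algebra is automatically simple. So Step~2 alone already yields both conclusions of the lemma.
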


\begin{proof}
    Any non-zero semisimple Lie algebra $\g$ can be decomposed into $m\geq 1$ mutually commutative simple ideals $\g_j$ such that
    \begin{align*}
        \g=\bigoplus_{j=1}^m\g_j,\qquad[\g_j,\g_k]=\left\{\begin{matrix}
            \g_j&,\text{ if }j=k,\\
            \{0\}&,\text{ if }j\neq k
        \end{matrix}\right.
    \end{align*}
    as demonstrated in~\cite{Knapp:1996}. Now suppose that $m\geq 2$. Then there would exist $j,k\in\{1,\ldots,m\}$ such that $j\neq k$, as well as $x_j,y_j\in \g_j\setminus\{0\}$ and $x_k,y_k\in \g_{k}$ such that $[x_j,y_j]\neq0\neq[x_k,y_k]$, since $\g_j,\g_k$ are simple and therefore non-abelian Lie algebras. This implies, in particular, that $x_j\notin \mathcal{Z}(\g)$. By construction, it holds that $[x_j,x_k]=0=[x_j,y_k]$. Thus $x_k,y_k\in C_\g(x_j)$. The acc would now imply that $[x_k,y_k]=0$, contradicting the initial assumption that these two elements do not commute. Thus, $m=1$, and $\g$ is simple. 

    Moreover, for any simple Lie algebra $\g$, it holds that $\mathcal{Z}(\g)=\{0\}$. Thus any simple Lie algebra that satisfies the acc is ct. Theorem~6 from~\cite{Klep:2010} now yields that $\g\cong\sl{2}{\C}$.
\end{proof}

\begin{lemma}\label{lem:structural:lietarture:results}
    It holds that (a) $\mathcal{U}=[\mathcal{U},\mathcal{U}]\oplus\mathcal{Z}(\mathcal{U})$, (b) $\mathcal{U}$ satisfies the acc, and (c) $\mathcal{U}$ is a domain.
\end{lemma}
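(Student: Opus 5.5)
The plan is to treat the three parts separately. Parts (a) and (c) go through the standard PBW filtration and the adjoint action of $\sl{2}{\C}$ on $\mathcal{U}$. For (b) I will show that $\mathcal{U}$ satisfies the ascending chain condition on one-sided ideals (Noetherianity) by a filtered–graded argument.

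\textbf{Part (a).} The adjoint action $\operatorname{ad}_v(u)=[v,u]$ for $v\in\sl{2}{\C}$ preserves each PBW filtration piece $\mathcal{U}_{\leq n}$. These pieces are finite-dimensional, so by Weyl's complete reducibility theorem $\mathcal{U}$ is a direct sum of finite-dimensional irreducible $\sl{2}{\C}$-modules. Group them into the trivial isotypic component $\mathcal{U}^{\sl{2}{\C}}$ and the sum $\mathcal{U}^{+}$ of the non-trivial ones.

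\emph{Trivial component.} Since $\sl{2}{\C}$ generates $\mathcal{U}$ as an associative algebra, an element commutes with all of $\mathcal{U}$ if and only if it commutes with $X,Y,Z$. Hence $\mathcal{U}^{\sl{2}{\C}}=\mathcal{Z}(\mathcal{U})$.

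\emph{Non-trivial components.} On a non-trivial irreducible module $V$ one has $\operatorname{ad}(\sl{2}{\C})V=V$. Therefore $\mathcal{U}^{+}=[\sl{2}{\C},\mathcal{U}]\subseteq[\mathcal{U},\mathcal{U}]$.

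\emph{Reverse inclusion.} To show $[\mathcal{U},\mathcal{U}]\subseteq\mathcal{U}^{+}$, I reduce to commutators with generators via
\begin{align*}
[ab,c]=[a,bc]+[b,ca].
\end{align*}
Induction on the number of generator factors in the first argument then gives $[\mathcal{U},\mathcal{U}]=[\sl{2}{\C},\mathcal{U}]$. The image of each $\operatorname{ad}_v$ has no trivial isotypic part, since $\operatorname{ad}_v$ acts as zero on the trivial component and preserves every isotypic component. This yields $\mathcal{U}=[\mathcal{U},\mathcal{U}]\oplus\mathcal{Z}(\mathcal{U})$.

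\textbf{Parts (b) and (c).} Both use the associated graded algebra. By the PBW theorem,
\begin{align*}
\operatorname{gr}\mathcal{U}=\bigoplus_n \mathcal{U}_{\leq n}/\mathcal{U}_{\leq n-1}\cong \C[x,y,z],
\end{align*}
because the commutators $[X,Y]=Z$ and its cyclic analogues drop the PBW degree.

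\emph{Part (c).} Given nonzero $u,v$, their leading symbols are nonzero polynomials. The symbol of $uv$ is the product of these symbols, which is nonzero because $\C[x,y,z]$ is an integral domain. Thus $\deg(uv)=\deg u+\deg v$ and $uv\neq0$, so $\mathcal{U}$ is a domain.

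\emph{Part (b).} Let $I_1\subseteq I_2\subseteq\cdots$ be an ascending chain of left ideals. Their symbol ideals $\operatorname{gr}I_k$ form an ascending chain of ideals in $\C[x,y,z]$, which stabilizes by Hilbert's basis theorem. The standard lemma then says: if $I\subseteq J$ are left ideals with $\operatorname{gr}I=\operatorname{gr}J$, then $I=J$. This is proved by induction on degree, subtracting from an element of $J$ an element of $I$ with the same leading symbol. Applying it, the chain $(I_k)$ stabilizes as well. The same argument applies to right ideals, so $\mathcal{U}$ satisfies the ascending chain condition.

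\textbf{Main obstacle.} The delicate step is the reverse inclusion in (a), i.e.\ that no commutator has a nonzero central component. The commutator identity above, combined with the fact that each $\operatorname{ad}_v$ kills the trivial isotypic component, is what I would rely on to secure it. The graded arguments in (b) and (c) are routine once the PBW identification of $\operatorname{gr}\mathcal{U}$ is in place.
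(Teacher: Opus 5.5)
Your arguments for (a) and (c) are fine. The isotypic decomposition of $\mathcal{U}$ under the adjoint action, combined with $[ab,c]=[a,bc]+[b,ca]$, gives $\mathcal{U}^{\sl{2}{\C}}=\mathcal{Z}(\mathcal{U})$ and $[\mathcal{U},\mathcal{U}]=[\sl{2}{\C},\mathcal{U}]=\mathcal{U}^{+}$. The leading-symbol argument in $\operatorname{gr}\mathcal{U}\cong\C[x,y,z]$ gives the domain property. These are self-contained substitutes for the literature citations the paper uses for (a) and (c).

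Part (b), however, proves the wrong statement. In this paper ``acc'' is the \emph{abelian centralizer condition} defined in Section~\ref{sec:def:and:conventions}: for every $u\in\mathcal{U}\setminus\mathcal{Z}(\mathcal{U})$, the centralizer $C_{\mathcal{U}}(u)$ is abelian. It is not the ascending chain condition. Noetherianity of $\mathcal{U}$ is true but irrelevant here. What the later lemmas actually use is that two elements commuting with a non-central element commute with each other. This excludes non-abelian nilpotent subalgebras, zero-eigenvalue Jordan blocks of $\operatorname{ad}_x$, negative eigenvalue ratios, and so on.

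Your filtered--graded toolkit cannot deliver this property. Every ingredient you use (PBW, $\operatorname{gr}$ being a polynomial ring, Hilbert's basis theorem, complete reducibility of the adjoint action) applies verbatim to $U(\sl{3}{\C})$. That algebra is a Noetherian domain with $U=[U,U]\oplus\mathcal{Z}(U)$. Yet $E_{13}$ is non-central there, and $E_{12},E_{23}\in C(E_{13})$ satisfy $[E_{12},E_{23}]=E_{13}\neq 0$.

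So (b) needs an input genuinely specific to $\sl{2}{\C}$. The paper takes it from Proposition~6.1 of \cite{Bavula:2006}, which establishes the associative commutative-centralizer property for non-central elements of $U(\sl{2}{\C})$. Since the Lie centralizer for the commutator bracket has the same underlying set as the associative centralizer, this is exactly the acc. You need either that result or an actual proof of it; the Noetherian argument should be dropped.
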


\begin{proof}
    Claim~(a) has, for instance, been established in the proof of Theorem~B.18 in~\cite{Heib:2026:SSRC:Universality}.  Claim~(b) follows from Proposition~6.1 in~\cite{Bavula:2006}, as the associative commutative-centralizer condition investigated there implies the acc for the commutator Lie algebra, because the two centralizers have the same underlying set. Claim~(c) is also recalled at the beginning of Section~6 in~\cite{Bavula:2006}.
\end{proof}

\section{Proof of the main classification result}\label{sec:main:proof}

In this section, we provide the proof of Theorem~\ref{thm:classification:Usl2C}.

\subsection{Nilpotent Lie subalgebras}


\begin{lemma}\label{lem:nilpotent:U:subalgebras}
    Let $\g$ be a finite-dimensional nilpotent Lie subalgebra of $\mathcal{U}$. Then $\g$ is abelian.
\end{lemma}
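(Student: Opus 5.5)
We argue by contradiction, using the structural results of Lemma~\ref{lem:structural:lietarture:results} together with Lemma~\ref{lem:subalgebra:acc}. Suppose $\g\subseteq\mathcal{U}$ is finite-dimensional, nilpotent, and non-abelian. By Lemma~\ref{lem:structural:lietarture:results}(a),(b), $\mathcal{U}$ is a non-abelian Lie algebra satisfying the acc with $\mathcal{U}=[\mathcal{U},\mathcal{U}]\oplus\mathcal{Z}(\mathcal{U})$. Lemma~\ref{lem:subalgebra:acc} therefore applies: the quotient $\mathfrak{h}:=\g/(\g\cap\mathcal{Z}(\mathcal{U}))$ is ct.

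\emph{Nontriviality of $\mathfrak{h}$.} Since $\g$ is non-abelian, $\g\cap\mathcal{Z}(\mathcal{U})\neq\g$. Moreover, $\mathfrak{h}$ is non-abelian. Indeed, if it were abelian, then $[\g,\g]\subseteq\g\cap\mathcal{Z}(\mathcal{U})\subseteq\mathcal{Z}(\mathcal{U})$. But also $[\g,\g]\subseteq[\mathcal{U},\mathcal{U}]$, and the direct sum decomposition in Lemma~\ref{lem:structural:lietarture:results}(a) gives $[\mathcal{U},\mathcal{U}]\cap\mathcal{Z}(\mathcal{U})=\{0\}$. Hence $[\g,\g]=0$, a contradiction.

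\emph{Contradiction from nilpotency.} As a quotient of a nilpotent Lie algebra, $\mathfrak{h}$ is nilpotent. A non-zero nilpotent Lie algebra has non-zero center, so choose $\bar z\in\mathcal{Z}(\mathfrak{h})\setminus\{\bar 0\}$. Then $C_{\mathfrak{h}}(\bar z)=\mathfrak{h}$, and by ct this centralizer must be abelian. So $\mathfrak{h}$ is abelian, contradicting the previous step. Therefore $\g$ is abelian.

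The main point requiring care is the second step: showing that the quotient is genuinely non-abelian. This relies on the fact that commutators in $\mathcal{U}$ meet the center trivially, which is exactly Lemma~\ref{lem:structural:lietarture:results}(a). Once that is established, the rest is the standard fact that a ct Lie algebra with non-zero center is abelian, as already noted after the definition of ct.
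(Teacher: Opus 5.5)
Your proof is correct. It uses the same three ingredients as the paper's proof: the acc of $\mathcal{U}$, the splitting $[\mathcal{U},\mathcal{U}]\cap\mathcal{Z}(\mathcal{U})=\{0\}$, and the fact that a non-zero nilpotent Lie algebra has non-zero center. You organize them differently, though.

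The paper argues directly inside $\g$. It picks a non-zero $z$ in the last non-zero term $\mathcal{C}^k\g$ of the lower central series; this $z$ is central in $\g$ and lies in $[\g,\g]$ because $k\geq 1$. It then splits into two cases. If $z\notin\mathcal{Z}(\mathcal{U})$, the acc forces $\g\subseteq C_{\mathcal{U}}(z)$ to be abelian. If $z\in\mathcal{Z}(\mathcal{U})$, then $z\in[\mathcal{U},\mathcal{U}]\cap\mathcal{Z}(\mathcal{U})=\{0\}$.

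You instead pass first to the ct quotient $\mathfrak{h}=\g/(\g\cap\mathcal{Z}(\mathcal{U}))$ via Lemma~\ref{lem:subalgebra:acc}, so the ``central in $\mathcal{U}$'' case is quotiented away. You then finish with the remark after the definition of ct that a ct Lie algebra with non-zero center is abelian.

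The paper's route is slightly more economical: the splitting is used once and no quotient is needed. Yours invokes the splitting twice, once inside Lemma~\ref{lem:subalgebra:acc} and again to see that $\mathfrak{h}$ is non-abelian. In exchange, it runs the nilpotent case through the same reduction to a ct algebra that the paper uses later for the non-solvable case in Lemma~\ref{lem:semisimple:U:subalgebra:sl2}.
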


\begin{proof}
    Suppose $\mathfrak{n}\subseteq\mathcal{U}$ is a non-zero finite-dimensional non-abelian nilpotent Lie subalgebra of $\mathcal{U}$. Then, there exists a largest positive integer $k\in\N_{\geq1}$ such that $\mathcal{C}^k\mathfrak{n}\neq\{0\}$, while $\mathcal{C}^\ell\mathfrak{n}=\{0\}$ for all $\ell\geq k+1$. By Definition~\ref{def:lie:algebra:concepts}, it follows from $\mathcal{C}^{k+1}\mathfrak{n}=[\mathfrak{n},\mathcal{C}^k\mathfrak{n}]=\{0\}$ that $\mathcal{C}^k\mathfrak{n}\subseteq\mathcal{Z}(\mathfrak{n})$. Now, choose $z\in \mathcal{C}^k\mathfrak{n}\setminus\{0\}$. This implies that for each $x\in\mathfrak{n}$, it holds that $x\in C_\mathcal{U}(z)$. Recall that $\mathcal{U}$ satisfies the acc by Lemma~\ref{lem:structural:lietarture:results}. There are two options: either (i) $z\in \mathcal{Z}(\mathcal{U})$, or (ii) $z\notin \mathcal{Z}(\mathcal{U})$. The latter case (ii) is impossible as it would imply, by the acc, that $[x,y]=0$ for all $x,y\in\mathfrak{n}\subseteq C_{\mathcal{U}}(z)$, which, in turn, would imply that $[\mathfrak{n},\mathfrak{n}]=\{0\}$, contradicting the initial assumption that $\mathfrak{n}$ is non-abelian. Thus, $z\in \mathcal{Z}(\mathcal{U})$. However, by construction, $z\in \mathcal{C}^k\mathfrak{n}\subseteq \mathcal{C}^1\mathfrak{n}=[\mathfrak{n},\mathfrak{n}]\subseteq[\mathcal{U},\mathcal{U}]$, which implies, by Lemma~\ref{lem:structural:lietarture:results}, the existence of a non-zero $z\in\mathcal{Z}(\mathcal{U})\cap[\mathcal{U},\mathcal{U}]=\{0\}$, which is impossible. Thus, $\mathfrak{n}$ cannot be non-abelian and nilpotent. Thus all nilpotent finite-dimensional subalgebras of $\mathcal{U}$ are abelian. 
\end{proof}

\begin{corollary}\label{cor:derived:solvable:algebra:abelian}
    Let $\g$ be a finite-dimensional solvable Lie subalgebra of $\mathcal{U}$. Then the derived algebra $\mathcal{D}\g$ is abelian.
\end{corollary}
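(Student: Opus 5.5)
The plan is to combine Lie's theorem with Lemma~\ref{lem:nilpotent:U:subalgebras}. Let $\g\subseteq\mathcal{U}$ be finite-dimensional and solvable. The standard fact we invoke is that, over $\C$, the derived algebra of a finite-dimensional solvable Lie algebra is nilpotent. This is a corollary of Lie's theorem: apply Lie's theorem to the adjoint representation to obtain a basis in which every $\operatorname{ad}_x$, $x\in\g$, is upper triangular. Then every element of $\mathcal{D}\g=[\g,\g]$ acts by a strictly upper triangular matrix, since commutators of upper triangular matrices are strictly upper triangular. Hence $\operatorname{ad}_{\g}(y)$ is nilpotent for every $y\in\mathcal{D}\g$, and in particular the restriction of $\operatorname{ad}_y$ to $\mathcal{D}\g$ is nilpotent. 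Engel's theorem now gives that $\mathcal{D}\g$ is a nilpotent Lie algebra.

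Next, $\mathcal{D}\g$ is itself a finite-dimensional Lie subalgebra of $\mathcal{U}$. It is a subspace of $\g$ closed under the bracket, and the bracket is the commutator in $\mathcal{U}$. Being nilpotent, it is abelian by Lemma~\ref{lem:nilpotent:U:subalgebras}. If $\mathcal{D}\g=\{0\}$ the claim is trivial, so we may assume $\mathcal{D}\g\neq\{0\}$ when invoking that lemma. Its statement covers all finite-dimensional nilpotent subalgebras anyway.

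There is no real obstacle here. The only point requiring care is citing correctly the classical fact that $[\g,\g]$ is nilpotent for solvable $\g$ in characteristic zero, since this is where the ground field $\C$ is used. Alternatively, one could argue directly with the acc: take $z$ in the last nonzero term of the lower central series of $\mathcal{D}\g$ and repeat the argument of Lemma~\ref{lem:nilpotent:U:subalgebras}. However, that argument still presupposes nilpotency of $\mathcal{D}\g$, so routing through the standard fact and the lemma is the cleanest approach.
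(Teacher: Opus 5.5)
Your proposal is correct and matches the paper's proof: the paper likewise uses the characteristic-zero fact that $\mathcal{D}\g$ is nilpotent for solvable $\g$, then applies Lemma~\ref{lem:nilpotent:U:subalgebras} to the finite-dimensional subalgebra $\mathcal{D}\g\subseteq\mathcal{U}$. The paper only cites that fact, whereas you derive it correctly from Lie's theorem applied to the adjoint representation, followed by Engel's theorem.
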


\begin{proof}
    Since $\g$ is solvable, its derived algebra $\mathcal{D}\g$ is nilpotent~\cite{Knapp:1996}\footnote{For this conclusion to be valid it is imperative that the Lie algebra of interest is taken over a field of characteristic zero~\cite{Jacobson:1979}. Thus, this assertion is correct here, since the complex numbers constitute a characteristic-zero field.}. Since $\mathcal{D}\g$ is also a finite-dimensional Lie subalgebra of $\mathcal{U}$, we may employ Lemma~\ref{lem:nilpotent:U:subalgebras} to conclude that $\mathcal{D}\g$ is abelian.
\end{proof}

\subsection{Semisimple Lie subalgebras}


\begin{lemma}\label{lem:g:semisimple:sl2c}
    Let $\g$ be a non-zero finite-dimensional semisimple Lie subalgebra of $\mathcal{U}$. Then, $\g\cong\sl{2}{\C}$.
\end{lemma}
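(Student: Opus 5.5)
The plan is to combine the structural facts about $\mathcal{U}$ from Lemma~\ref{lem:structural:lietarture:results} with the acc-based results of Lemmas~\ref{lem:subalgebra:acc} and~\ref{lem:acc:semisimple:sl2}.

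First, I will show that $\g$ satisfies the acc. A non-zero semisimple Lie algebra is non-abelian. By Lemma~\ref{lem:structural:lietarture:results}, $\mathcal{U}$ is non-abelian, satisfies the acc, and decomposes as $\mathcal{U}=[\mathcal{U},\mathcal{U}]\oplus\mathcal{Z}(\mathcal{U})$. The hypotheses of Lemma~\ref{lem:subalgebra:acc} therefore hold with $\mathfrak{l}=\mathcal{U}$, and that lemma gives that $\g$ satisfies the acc.

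Next, I will apply Lemma~\ref{lem:acc:semisimple:sl2} to the non-zero finite-dimensional semisimple Lie algebra $\g$. It yields that $\g$ is simple and $\g\cong\sl{2}{\C}$.

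As a consistency check, I also want to confirm that $\g\cap\mathcal{Z}(\mathcal{U})=\{0\}$, so that $\g$ coincides with the ct quotient $\mathfrak{h}$ of Lemma~\ref{lem:subalgebra:acc}. This follows because $\g=[\g,\g]\subseteq[\mathcal{U},\mathcal{U}]$ for semisimple $\g$, and $[\mathcal{U},\mathcal{U}]\cap\mathcal{Z}(\mathcal{U})=\{0\}$. Hence $\g$ is itself ct, and one could alternatively invoke Theorem~6 of~\cite{Klep:2010} directly. This check is not needed for the main argument.

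I do not expect a real obstacle here, since all the substantive input sits in the cited results (Bavula's centralizer property and Klep's classification of ct simple Lie algebras). The only point needing care is verifying that the hypotheses of Lemma~\ref{lem:subalgebra:acc} hold, in particular that $\mathcal{U}$ is non-abelian, which is clear because $[X,Y]=Z\neq0$.
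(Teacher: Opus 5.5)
Your proposal is correct and follows the paper's proof: Lemma~\ref{lem:subalgebra:acc} with $\mathfrak{l}=\mathcal{U}$, whose hypotheses are supplied by Lemma~\ref{lem:structural:lietarture:results}, shows that $\g$ satisfies the acc, and Lemma~\ref{lem:acc:semisimple:sl2} then gives $\g\cong\sl{2}{\C}$. Your side remark that $\g=[\g,\g]$ meets $\mathcal{Z}(\mathcal{U})$ trivially is also correct, but, as you say, it is not needed for the main argument.
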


\begin{proof}
    This is a direct consequence of Lemmata~\ref{lem:subalgebra:acc} and~\ref{lem:acc:semisimple:sl2}, since Lemma~\ref{lem:structural:lietarture:results} recalls that $\mathcal{U}$ satisfies the acc.
\end{proof}

\subsection{Non-solvable Lie subalgebras}


\begin{lemma}\label{lem:semisimple:U:subalgebra:sl2}
    Let $\g$ be a non-zero finite-dimensional non-solvable Lie subalgebra of $\mathcal{U}$. Then $\g\cong\sl{2}{\C}\oplus\C^d$, where $d\in\Np{}$ and $\C^0\equiv \{0\}$ by convention.
\end{lemma}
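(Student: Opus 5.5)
We will use the Levi decomposition $\g=\mathfrak{s}\ltimes\mathfrak{r}$, where $\mathfrak{r}$ is the radical of $\g$ and $\mathfrak{s}$ is a Levi subalgebra. Since $\g$ is non-solvable, $\mathfrak{s}\neq\{0\}$. Because $\mathfrak{s}$ is a semisimple Lie subalgebra of $\mathcal{U}$, Lemma~\ref{lem:g:semisimple:sl2c} gives $\mathfrak{s}\cong\sl{2}{\C}$. It then remains to show two things: that $[\mathfrak{s},\mathfrak{r}]=\{0\}$, and that $\mathfrak{r}$ is abelian. Together these give $\g=\mathfrak{s}\oplus\mathfrak{r}\cong\sl{2}{\C}\oplus\C^d$ with $d=\dim\mathfrak{r}$.

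The key tool is the acc for $\g$, which holds by Lemma~\ref{lem:subalgebra:acc} together with Lemma~\ref{lem:structural:lietarture:results}. We also use the splitting $\mathcal{U}=[\mathcal{U},\mathcal{U}]\oplus\mathcal{Z}(\mathcal{U})$.

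\textbf{Step 1: $\mathfrak{s}$ acts trivially on $\mathfrak{r}$.} By Weyl's theorem, $\mathfrak{r}$ decomposes under $\operatorname{ad}\mathfrak{s}$ into irreducible $\sl{2}{\C}$-modules. Suppose some irreducible component $V\subseteq\mathfrak{r}$ has dimension at least $2$. Pick a nonzero $h\in\mathfrak{s}$ that acts semisimply, for instance $h=Z$ after identifying $\mathfrak{s}$ with $\sl{2}{\C}$. Then $C_\g(h)$ contains $h$ itself, and it also contains the zero-weight space of $V$ when $\dim V$ is odd. We derive a contradiction differently in the two parities.
\begin{itemize}
\item[(i)] Take the highest-weight vector $v\in V$ and the raising element $e\in\mathfrak{s}$. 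Both lie in $C_\g(e)$, and $e\notin\mathcal{Z}(\g)$. Hence $[e,v]=0$ is automatic and gives nothing. A more promising choice is $x=v$ itself: $C_\g(v)$ contains $e$ and other elements, and we would like to find two non-commuting elements in it.
\item[(ii)] Alternatively, work in the quotient $\mathfrak{h}=\g/(\g\cap\mathcal{Z}(\mathcal{U}))$, which is ct by Lemma~\ref{lem:subalgebra:acc}. The image of $\mathfrak{s}$ is still isomorphic to $\sl{2}{\C}$, because $\mathfrak{s}=[\mathfrak{s},\mathfrak{s}]$ meets the center trivially. In a ct Lie algebra, commutation is transitive on nonzero elements. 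If $e$ commutes with $v$, and $v$ commutes with $e'$ for some other element $e'$, then $e$ and $e'$ must commute.
\end{itemize}
Concretely, suppose $\dim V\geq 2$ and let $v$ be the highest-weight vector, so that $[e,v]=0$. Consider the nilradical part $[\g,\mathfrak{r}]$, which is nilpotent and therefore abelian by Lemma~\ref{lem:nilpotent:U:subalgebras}. It contains $V$, because $V=[\mathfrak{s},V]$. Hence $v$ commutes with the lowest-weight vector $w$ of $V$, which is nonzero in $\mathfrak{h}$. Then $w$ commutes with $f$. By transitivity in the ct algebra $\mathfrak{h}$ (each element nonzero modulo the center, since $V\subseteq[\g,\g]$), the chain $e\sim v\sim w\sim f$ forces $[e,f]=0$ in $\mathfrak{h}$. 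This contradicts $[e,f]=h\neq 0$. So every component is trivial and $[\mathfrak{s},\mathfrak{r}]=0$.

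\textbf{Step 2: $\mathfrak{r}$ is abelian.} By Step 1, every $r\in\mathfrak{r}$ lies in $C_\g(x)$ for any $x\in\mathfrak{s}\setminus\{0\}$, and such $x\notin\mathcal{Z}(\g)$. The acc therefore says $C_\g(x)$ is abelian, and $\mathfrak{r}\subseteq C_\g(x)$ gives $[\mathfrak{r},\mathfrak{r}]=0$. Hence $\g=\mathfrak{s}\oplus\mathfrak{r}\cong\sl{2}{\C}\oplus\C^d$.

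The main obstacle is Step 1, specifically verifying that the chain elements are nonzero modulo $\g\cap\mathcal{Z}(\mathcal{U})$ so that ct applies. Our plan is to argue that every element of $V$ or $\mathfrak{s}$ lies in $[\g,\g]\subseteq[\mathcal{U},\mathcal{U}]$, which meets $\mathcal{Z}(\mathcal{U})$ trivially. We also need to make sure the relevant weight vectors are distinct (for instance $v\neq w$ when $\dim V\geq 2$), so that the transitivity chain is valid.
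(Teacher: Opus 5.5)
Your proof is correct, but it takes a different route from the paper.

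\textbf{The paper's route.} The paper never decomposes $\mathfrak{r}$. It passes to $\mathfrak{h}=\g/(\g\cap\mathcal{Z}(\mathcal{U}))$ and checks that $\mathfrak{h}$ is still non-solvable: the centre--commutator splitting makes the quotient map injective on the stable term $\mathcal{D}^\ell\g$ of the derived series. It then observes that $\mathfrak{h}$ is ct by Lemma~\ref{lem:subalgebra:acc} and imports Theorem~7 of~\cite{Klep:2010}, that a non-solvable ct Lie algebra is semisimple. Lemma~\ref{lem:acc:semisimple:sl2} then gives $\mathfrak{h}\cong\sl{2}{\C}$, so the image of the radical vanishes and $\mathfrak{r}\subseteq\g\cap\mathcal{Z}(\mathcal{U})$.

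\textbf{Your route.} You use Weyl's theorem, $\sl{2}{\C}$ weight vectors and the abelianness of the nilpotent ideal $[\g,\mathfrak{r}]$ to rule out non-trivial irreducible summands of $\mathfrak{r}$. You then apply the acc at any $x\in\mathfrak{s}\setminus\{0\}$ to make $\mathfrak{r}$ abelian. This avoids the external ct theorem and the non-solvability check for the quotient. The price is Weyl's theorem and the characteristic-zero fact $[\g,\operatorname{rad}\g]\subseteq\operatorname{nil}\g$, which you should state with a reference. The paper's sharper conclusion $\mathfrak{r}\subseteq\mathcal{Z}(\mathcal{U})$ follows from yours in one line: for $r\in\mathfrak{r}\setminus\mathcal{Z}(\mathcal{U})$, the non-abelian $\mathfrak{s}$ would lie in the abelian $C_{\mathcal{U}}(r)$.

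\textbf{Tightening the write-up.} Drop the exploratory items (i) and (ii) and the parity remark. The quotient and the four-term chain are not needed either. Since $[f,v]\neq 0$, you have $v\notin\mathcal{Z}(\mathcal{U})$, and $e,w\in C_{\mathcal{U}}(v)$. The acc of $\mathcal{U}$ therefore already forces $[e,w]=0$, contradicting $[e,w]\neq 0$ for a lowest-weight vector of an irreducible module of dimension at least $2$.

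Even more directly, apply Corollary~\ref{cor:derived:solvable:algebra:abelian} to the solvable subalgebra $\spn\{h,e\}\ltimes\mathfrak{r}$. Its derived algebra contains $e$ and $w$, because $w$ has non-zero $h$-weight. This gives $[e,w]=0$ immediately, without the nilradical fact.
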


\begin{proof}
    The Levi-Mal'tsev theorem implies that $\g$ may be written as the semidirect sum $\g\cong\mathfrak{s}\ltimes\mathfrak{r}$, where $\mathfrak{s}$ is a semisimple Lie subalgebra of $\g$ and $\mathfrak{r}$ is its largest solvable ideal, the radical~\cite{Levi:1905,Matltsev:1942,Serre:1992}. Both are Lie subalgebras of $\mathcal{U}$, and since $\g$ is non-solvable $\mathfrak{s}\neq\{0\}$. Thus, by Lemma~\ref{lem:g:semisimple:sl2c}, it must hold that $\mathfrak{s}\cong \sl{2}{\C}$. 

    We now define the quotient map $q:\g\to\mathfrak{h}:=\g/(\g\cap \mathcal{Z}(\mathcal{U}))$ and write $\Bar{u}:=q(u)=u+(\g\cap\mathcal{Z}(\mathcal{U}))$ for $u\in\g$. Note that the denominator is a central ideal of $\g$, making its quotient $\mathfrak{h}$ a Lie algebra. We now want to show that $\mathfrak{h}$ is non-solvable. For this, we recognize that, for all $x,y\in \g$,
    \begin{align*}
        q([x,y])=[x,y]+(\g\cap\mathcal{Z}(\mathcal{U}))=[x+(\g\cap\mathcal{Z}(\mathcal{U})),y+(\g\cap\mathcal{Z}(\mathcal{U}))]=[q(x),q(y)].
    \end{align*}
    Thus, since a quotient map is linear, $q$ is a Lie-algebra homomorphism~\cite{Humphreys:1972}. It follows, since $q$ is by definition surjective, that
    \begin{align*}
        q(\mathcal{D}\g)&=q\left(\spn\left\{[x,y]\,\mid\,x,y\in \g\right\}\right)=\spn\left\{q([x,y])\,\mid\,x,y\in \g\right\}=\spn\{[q(x),q(y)]\,\mid\, x,y\in \g\}\\
        &=\spn\{[\Bar{x},\Bar{y}]\,\mid\, \Bar{x},\Bar{y}\in\mathfrak{h}\}=\mathcal{D}\mathfrak{h}=\mathcal{D}q(\g).
    \end{align*}
    That is, $q$ and $\mathcal{D}$ commute, which also implies that $q(\mathcal{D}^\ell\g)=\mathcal{D}^\ell q(\g)=\mathcal{D}^\ell \mathfrak{h}$ for all $\ell\in\Np{}$. Indeed, this follows by induction. The base case has been shown and the induction step is immediate, because
    \begin{align*}
        q(\mathcal{D}^{\ell+1}\g)=q([\mathcal{D}^\ell\g,\mathcal{D}^\ell\g])=[q(\mathcal{D}^\ell\g),q(\mathcal{D}^\ell\g)]=[\mathcal{D}^\ell\mathfrak{h},\mathcal{D}^\ell\mathfrak{h}]=\mathcal{D}^{\ell+1}\mathfrak{h}.
    \end{align*}
    Since $\g$ is, by assumption, finite-dimensional and non-solvable, there exists some $\ell\in\N_{>0}$ such that $\mathcal{D}^\ell\g=\mathcal{D}^{\ell+1}\g\neq\{0\}$. The stated centre-commutator splitting, recalled in Lemma~\ref{lem:structural:lietarture:results}, implies, in particular, that $\mathcal{D}^\ell\g\cap(\g\cap\mathcal{Z}(\mathcal{U}))\subseteq[\mathcal{U},\mathcal{U}]\cap\mathcal{Z}(\mathcal{U})=\{0\}$. Hence the restriction of $q$ to $\mathcal{D}^\ell\g$ is injective and its image is non-zero. It follows that $\{\Bar{0}\}\neq\mathcal{D}^\ell \mathfrak{h}=q(\mathcal{D}^\ell \g)=q(\mathcal{D}^{\ell+1}\g)=\mathcal{D}^{\ell+1}\mathfrak{h}$, which demonstrates that $\mathfrak{h}$ is non-solvable. 
    
    Lemma~\ref{lem:subalgebra:acc} now implies that $\mathfrak{h}$ is ct, and therefore satisfies the acc. A non-solvable ct Lie algebra is by Theorem~7 from~\cite{Klep:2010} semisimple.  Due to Lemma~\ref{lem:acc:semisimple:sl2}, it follows that $\mathfrak{h}\cong\sl{2}{\C}$. Thus, the solvable ideal $q(\mathfrak{r})$ is $\{\Bar{0}\}$, which implies that $\mathfrak{r}\subseteq \g\cap\mathcal{Z}(\mathcal{U})\subseteq\mathcal{Z}(\g)$. Since $\mathfrak{s}\cap\mathcal{Z}(\g)=\{0\}$, and $\g=\mathfrak{s}\ltimes\mathfrak{r}$, it follows that $\mathfrak{r}=\g\cap\mathcal{Z}(\g)$. Consequently, the representation defining the semidirect sum $\mathfrak{s}\ltimes\mathfrak{r}$ is the trivial one, showing that $\g=\mathfrak{s}\oplus\mathcal{Z}(\g)$ as Lie algebras. Hence, $\g\cong\sl{2}{\C}\oplus\C^d$, where $d:=\dim(\mathcal{Z}(\g))\in\Np{}$.
\end{proof}

\subsection{Solvable non-nilpotent Lie subalgebras}

For clarity, we split the proof of the solvable non-nilpotent classification Lemma~\ref{lem:solvable:non:nilpotent} into several auxiliary lemmata. Throughout these auxiliary arguments, $\g\subseteq\mathcal{U}$ is a finite-dimensional solvable non-nilpotent Lie subalgebra, and $x\in\g$ is chosen such that $\operatorname{ad}_x:\g\to\g, y\mapsto\operatorname{ad}_x(y):=[x,y]$ is not nilpotent. Moreover, we define the index sets
\begin{align*}
    \mathcal{I}_{k}^{(N)}:=\left\{(n_1,n_2,\ldots,n_k)\in \Np{k}\,\mid\, \sum_{j=1}^k n_j\leq N\right\}\qquad\text{and}\qquad\Bar{\mathcal{I}}_{k}^{(n)}:=\left\{(n_1,n_2,\ldots,n_k)\in \Np{k}\,\mid\, \sum_{j=1}^k n_j=n\right\}
\end{align*}
for ease of presentation. The proof proceeds as follows: first, we show that, for any $x\in\mathcal{U}$ for which $\operatorname{ad}_x$ possesses a non-zero eigenvalue, $C$ and $x$ cannot vanish under any finite polynomial relation except the trivial one. Then, we show that any non-nilpotent $\operatorname{ad}_x$ is diagonalizable and that the ratios of its non-zero eigenvalues are positive and rational. Finally, we normalize these eigenvalues by rescaling $x$ and separate the center to obtain the stated direct-sum decomposition.

\begin{lemma}\label{lem:polynom:C:x:zero:if:x:nonnilpotent}
    Let $x\in \mathcal{U}$, such that $\operatorname{ad}_x$ has an eigenvector $y\in\mathcal{U}\setminus\{0\}$ to a non-zero eigenvalue $\lambda\in\C\setminus\{0\}$, i.e., $\operatorname{ad}_x(y)=\lambda y\neq 0$. Moreover, let $p\in\C[a,b]$ be a finite complex polynomial. Then $p(C,x)=0$ if and only if $p=0$.
\end{lemma}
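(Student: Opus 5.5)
We argue by contradiction, exploiting the shift relation that the eigenvector $y$ induces on polynomials in $x$. The direction \enquote{$p=0\Rightarrow p(C,x)=0$} is trivial. Since $C\in\mathcal{Z}(\mathcal{U})$, the elements $C$ and $x$ commute. Hence evaluation $\operatorname{ev}:\C[a,b]\to\mathcal{U}$, $r\mapsto r(C,x)$, is a well-defined homomorphism of unital associative algebras. Its kernel $I$ is an ideal of $\C[a,b]$, and we must show $I=\{0\}$.

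\emph{Step 1 (shift invariance of $I$).} From $[x,y]=\lambda y$ we get $yx=(x-\lambda\unity)y$, equivalently $xy=y(x+\lambda\unity)$. By induction, $x^k y=y(x+\lambda\unity)^k$. Since $C$ is central, this gives
\begin{align*}
    r(C,x)\,y=y\,r(C,x+\lambda\unity)\qquad\text{for all }r\in\C[a,b].
\end{align*}
If $r\in I$, then $y\,r(C,x+\lambda\unity)=0$. Because $y\neq0$ and $\mathcal{U}$ is a domain by Lemma~\ref{lem:structural:lietarture:results}(c), it follows that $r(C,x+\lambda\unity)=0$. Thus $r\in I$ implies $r(a,b+\lambda)\in I$. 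Consequently the finite-difference operator $(\Delta r)(a,b):=r(a,b+\lambda)-r(a,b)$ maps $I$ into $I$.

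\emph{Step 2 (eliminating $x$).} Suppose $0\neq p\in I$ and write $p(a,b)=\sum_{k=0}^n q_k(a)b^k$ with $q_n\neq0$. Each application of $\Delta$ lowers the $b$-degree by one. A direct computation gives $\Delta^n p=n!\,\lambda^n q_n(a)$. Since $\lambda\neq0$, we obtain $0\neq q_n\in I\cap\C[a]$, which means $q_n(C)=0$ for a non-zero univariate polynomial $q_n$. This is where the hypothesis $\lambda\neq0$ is used; the case $n=0$ already gives this directly.

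\emph{Step 3 (transcendence of $C$) — the main obstacle.} It remains to show that $C$ satisfies no non-zero polynomial relation over $\C$. I would use the PBW filtration. The associated graded algebra of $\mathcal{U}$ is the polynomial ring $S(\sl{2}{\C})\cong\C[X,Y,Z]$, which is a domain. Under this identification the principal symbol of $C$ is $X^2+Y^2+Z^2\neq0$. Hence the symbol of $C^k$ is $(X^2+Y^2+Z^2)^k\neq0$, and so $\deg(C^k)=2k$. For $q_n(a)=\sum_{j\le N}c_ja^j$ with $c_N\neq0$, the top-degree part of $q_n(C)$ is therefore $c_N(X^2+Y^2+Z^2)^N\neq0$, so $q_n(C)\neq0$. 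This contradicts Step 2, so $I=\{0\}$ and $p(C,x)=0$ forces $p=0$.

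The only delicate point is justifying that PBW degree is additive, $\deg(uv)=\deg u+\deg v$. This is exactly the statement that $\operatorname{gr}\mathcal{U}$ is the symmetric algebra and hence a domain. I would cite it together with the PBW theorem rather than verify it by hand.
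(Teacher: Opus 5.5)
Your proof is correct and follows essentially the same route as the paper. In both, the shift identity $r(C,x)\,y=y\,r(C,x+\lambda\unity)$ together with the domain property makes the relation set stable under the finite difference in $b$, which removes the $x$-dependence: the paper takes a counterexample of minimal $b$-degree and applies one difference step, whereas you iterate $\Delta$ to reach $n!\lambda^n q_n(a)$ directly. The remaining univariate relation in $C$ is then excluded by the top-degree term of $C^N$, which the paper reads off as the coefficient $\theta_{d_1}$ of $Z^{2d_1}$ and you obtain as the principal symbol $(X^2+Y^2+Z^2)^N$ in $\operatorname{gr}\mathcal{U}\cong S(\sl{2}{\C})$.
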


\begin{proof}
    Suppose there exists a finite complex polynomial $p\in\C[a,b]\setminus\{0\}$ satisfying $p(C,x)=0$ of least possible degree in the second argument. We can expand it as
    \begin{align*}
        p(C,x)=\sum_{\alpha=0}^{d_1}\sum_{\beta=0}^{d_2}\theta_{\alpha\beta}C^\alpha x^\beta=0,
    \end{align*}
    where $d_1:=\deg_a(p)=\max\{\alpha\,\mid\,\theta_{\alpha\beta}\neq 0\text{ for some }\beta\in\Np{}\}$ and $d_2:=\deg_b(p)=\max\{\beta\,\mid\,\theta_{\alpha \beta}\neq 0\text{ for some }\alpha\in\Np{}\}$.
    Let us first consider the case that $d_2\geq 1$. 
    Recall that the Lie bracket of $\mathcal{U}$ is the commutator. Thus, $\operatorname{ad}_x(y)=xy-yx=\lambda y$ implies $xy=y(x+\lambda\unity)$. For $\beta\geq 1$, iteratively applying this identity yields therefore $x^\beta y=x^{\beta-1} y(x+\lambda\unity)=\ldots=y(x+\lambda\unity)^\beta$. The same identity holds for $\beta=0$. It follows that
    \begin{align*}
        0&=p(C,x)y=\sum_{\alpha=0}^{d_1}\sum_{\beta=0}^{d_2}\theta_{\alpha\beta}C^\alpha x^\beta y=\sum_{\alpha=0}^{d_1}\sum_{\beta=0}^{d_2}\theta_{\alpha\beta}C^\alpha y(x+\lambda\unity)^\beta=yp(C,x+\lambda\unity),
    \end{align*}
    since $C$ and $y$ commute. Recall that $\mathcal{U}$ is, according to Lemma~\ref{lem:structural:lietarture:results}, a domain, which implies that $p(C,x+\lambda\unity)=0$, and consequently $p(C,x)=p(C,x+\lambda\unity)$ holds. Hence, the polynomial $q(a,b):=p(a,b+\lambda)-p(a,b)$ also satisfies $q(C,x)=0$. The coefficient of $b^{d_2-1}$ in $q$ is
    \begin{align*}
        d_2\lambda \sum_{\alpha=0}^{d_1}\theta_{\alpha d_2}a^\alpha\neq 0.
    \end{align*}
    Thus $q$ is non-zero, satisfies $q(C,x)=0$, and has a smaller degree in the second argument than $p$, contradicting the choice of $p$. Thus, it must hold that $d_2=0$, and consequently $p(C,x)=\sum_{\alpha=0}^{d_1}\theta_\alpha C^\alpha$, where, without loss of generality, $\theta_{d_1}\neq 0$. Here, we may now expand $p(C,x)$ into PBW normal ordering by using $C=X^2+Y^2+Z^2$. The coefficient of $Z^{2d_1}$ is $\theta_{d_1}$, so linear independence of PBW monomials with different multi-degrees and vanishing of $p(C,x)$ would imply that $\theta_{d_1}=0$, contradicting the earlier assumption that $\theta_{d_1}\neq 0$. Hence $p(C,x)=0$ implies $p=0$; the converse is immediate.
\end{proof}

\begin{lemma}\label{lem:generalized:eigenvectors}
    Let $x,y,z\in\mathcal{U}\setminus\{0\}$ and $\lambda\in\C\setminus\{0\}$ satisfy $[x,y]=\lambda y$, $[x,z]=\lambda z+y$, $[y,z]=0$. Then, for all $j,k\in\Np{}$, the term $y^j z^k$ is a generalized eigenvector of $\operatorname{ad}_x$ of rank $k+1$ to eigenvalue $\lambda(j+k)$. Moreover, for a polynomial $p\in\C[a,b]\setminus\{0\}$, the term $p(C,x) y^j z^k$ is also a generalized eigenvector of $\operatorname{ad}_x$ of rank $k+1$ to eigenvalue $\lambda(j+k)$. 
\end{lemma}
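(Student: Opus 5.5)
We interpret "generalized eigenvector of rank $k+1$ to eigenvalue $\mu$" as $(\operatorname{ad}_x-\mu)^{k+1}$ annihilating the element while $(\operatorname{ad}_x-\mu)^{k}$ does not. Write $D:=\operatorname{ad}_x-\lambda(j+k)\,\mathrm{id}$, so the goal is $D^{k+1}(y^jz^k)=0\neq D^k(y^jz^k)$.

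The key tool is that $\operatorname{ad}_x$ is a derivation of the associative product. Since $y$ and $z$ commute, Leibniz gives
\begin{align*}
\operatorname{ad}_x(y^jz^k)=j\lambda y^jz^k+k\,y^{j}z^{k-1}(\lambda z+y)=\lambda(j+k)y^jz^k+k\,y^{j+1}z^{k-1}.
\end{align*}
Hence
\begin{align*}
D(y^jz^k)=k\,y^{j+1}z^{k-1}.
\end{align*}
Note that $\lambda(j+1+k-1)=\lambda(j+k)$, so the same operator $D$ acts on the next monomial with the same eigenvalue. Iterating by induction on $\ell$ yields
\begin{align*}
D^\ell(y^jz^k)=\tfrac{k!}{(k-\ell)!}\,y^{j+\ell}z^{k-\ell}\quad\text{for }\ell\le k.
\end{align*}
In particular, $D^k(y^jz^k)=k!\,y^{j+k}$ and $D^{k+1}(y^jz^k)=k!\,D(y^{j+k})=0$. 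Non-vanishing of $k!\,y^{j+k}$ follows because $\mathcal{U}$ is a domain (Lemma~\ref{lem:structural:lietarture:results}(c)) and $y\neq0$. The same domain property gives $y^jz^k\neq0$, so the rank is exactly $k+1$.

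For the second claim, $\operatorname{ad}_x$ kills $C$ (central) and $x$, so $\operatorname{ad}_x(p(C,x))=0$. The Leibniz rule then gives $D(p(C,x)w)=p(C,x)D(w)$. Consequently,
\begin{align*}
D^\ell\bigl(p(C,x)y^jz^k\bigr)=p(C,x)\,D^\ell(y^jz^k).
\end{align*}
Vanishing for $\ell=k+1$ is then immediate. For $\ell=k$ we need $p(C,x)\,k!\,y^{j+k}\neq0$. Here we invoke Lemma~\ref{lem:polynom:C:x:zero:if:x:nonnilpotent}: $y$ is an eigenvector of $\operatorname{ad}_x$ to the nonzero eigenvalue $\lambda$, so $p\neq0$ implies $p(C,x)\neq0$. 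The domain property then finishes the argument.

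The only delicate point I expect is bookkeeping: the derivation formula for $\operatorname{ad}_x(z^k)$ uses that $y$ commutes with $z$, so that $\sum_i z^{i}(\lambda z+y)z^{k-1-i}=k\lambda z^k+k\,yz^{k-1}$, and this must be stated explicitly. The edge cases $k=0$ (a genuine eigenvector) and $j=0$ are covered by the same formulas.
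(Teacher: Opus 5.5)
Your proof is correct and follows the same route as the paper. Both use the Leibniz rule to get $(\operatorname{ad}_x-\lambda(j+k))(y^jz^k)=k\,y^{j+1}z^{k-1}$ and iterate it at fixed eigenvalue, use the domain property for $k!\,y^{j+k}\neq0$, and invoke Lemma~\ref{lem:polynom:C:x:zero:if:x:nonnilpotent} for $p(C,x)\neq0$. The paper additionally derives a closed formula for $\operatorname{ad}_x^\ell(y^jz^k)$, which it does not need for the conclusion and which you rightly skip.
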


\begin{proof}
    Suppose there exist $x,y,z\in \mathcal{U}\setminus\{0\}$ such that $\operatorname{ad}_x(y)=\lambda y$ and $\operatorname{ad}_x(z)=\lambda z+y$ for $\lambda\in\C\setminus\{0\}$, as well as $[y,z]=0$. Then, $\operatorname{ad}_x$ is invertible on the two-dimensional subspace $\spn\{y,z\}$, and thus $\spn\{y,z\}\subseteq[\mathcal{U},\mathcal{U}]$. In the following calculation, terms that would contain a negative power are omitted before they are evaluated. This is justified, since their prefactors will vanish. Since $\operatorname{ad}_x$ is a derivation on $\mathcal{U}$, one has therefore
    \begin{align*}
        \operatorname{ad}_x(y^jz^k)&=j\operatorname{ad}_x(y)y^{j-1}z^k+k\operatorname{ad}_x(z)y^jz^{k-1}=j\lambda y^jz^k+k(\lambda z+y)y^jz^{k-1}=(j+k)\lambda y^jz^k+ky^{j+1}z^{k-1},
    \end{align*}
    for all $j,k\in\Np{}$. We can extend this formula to arbitrary powers of $\operatorname{ad}_x$. That is, for all $\ell\in\Np{}$, one has
    \begin{align*}
        \operatorname{ad}_x^\ell(y^jz^k)&=\sum_{m=0}^{\min\{\ell,k\}} \binom{\ell}{m}(j+k)^{\ell-m}\lambda^{\ell-m}\frac{k!}{(k-m)!}y^{j+m}z^{k-m}.
    \end{align*}
    This can be shown by induction on $\ell$. The case $\ell=0$ is trivial, since $\operatorname{ad}_x^0\equiv\operatorname{id}_\mathcal{U}$. The case $\ell=1$ is already established. Thus, it suffices to observe that
    \begin{align*}
        \operatorname{ad}_x^{\ell+1}(y^jz^k)&=\sum_{m=0}^\ell \binom{\ell}{m}(j+k)^{\ell-m}\lambda^{\ell-m}\frac{k!}{(k-m)!}\operatorname{ad}_x(y^{j+m}z^{k-m})\\
        &=\sum_{m=0}^\ell \binom{\ell}{m}(j+k)^{\ell-m+1}\lambda^{\ell-m+1}\frac{k!}{(k-m)!}y^{j+m}z^{k-m}+\sum_{m=0}^\ell \binom{\ell}{m}(j+k)^{\ell-m}\lambda^{\ell-m}\frac{k!}{(k-m-1)!}y^{j+m+1}z^{k-m-1}\\
        &=\sum_{m=0}^\ell \binom{\ell}{m}(j+k)^{\ell+1-m}\lambda^{\ell+1-m}\frac{k!}{(k-m)!}y^{j+m}z^{k-m}+\sum_{m=1}^{\ell+1}\binom{\ell}{m-1}(j+k)^{\ell+1-m}\lambda^{\ell+1-m}\frac{k!}{(k-m)!}y^{j+m}z^{k-m}\\
        &=\binom{\ell}{0}(j+k)^{\ell+1}\lambda^{\ell+1}y^jz^k+\sum_{m=1}^\ell\left[\binom{\ell}{m}+\binom{\ell}{m-1}\right](j+k)^{\ell+1-m}\lambda^{\ell+1-m}\frac{k!}{(k-m)!}y^{j+m}z^{k-m}\\
        &\quad+\binom{\ell}{\ell}\frac{k!}{(k-(\ell+1))!}y^{j+\ell+1}z^{k-\ell-1}\\
        &=\binom{\ell+1}{0}(j+k)^{\ell+1}\lambda^{\ell+1}y^jz^k+\sum_{m=1}^\ell\binom{\ell+1}{m}(j+k)^{\ell+1-m}\lambda^{\ell+1-m}\frac{k!}{(k-m)!}y^{j+m}z^{k-m}\\
        &\quad+\binom{\ell+1}{\ell+1}\frac{k!}{(k-(\ell+1))!}y^{j+\ell+1}z^{k-\ell-1}\\
        &=\sum_{m=0}^{\ell+1} \binom{\ell+1}{m}(j+k)^{\ell+1-m}\lambda^{\ell+1-m}\frac{k!}{(k-m)!}y^{j+m}z^{k-m},
    \end{align*}
    where every term with a negative exponent or negative factorial argument is omitted, as stipulated before. This holds, in particular, whenever $m>k$ and therefore only in cases with $\ell+1>k$.
    
    Moreover, we also observe that
    \begin{align*}
        \operatorname{ad}_x(y^jz^k)&=(j+k)\lambda y^jz^k+ky^{j+1}z^{k-1}\qquad
        \Leftrightarrow\qquad(\operatorname{ad}_x-(j+k)\lambda\operatorname{id}_\mathcal{U})(y^jz^k)=ky^{j+1}z^{k-1}.
    \end{align*}
    It is straightforward to verify by induction that, for $0\leq \ell\leq k$, consequently
    \begin{align*}
        (\operatorname{ad}_x-(j+k)\lambda\operatorname{id}_\mathcal{U})^\ell(y^jz^k)&=\frac{k!}{(k-\ell)!}y^{j+\ell}z^{k-\ell}
    \end{align*}
    holds. Specifically, it holds that $(\operatorname{ad}_x-(j+k)\lambda\operatorname{id}_\mathcal{U})^k(y^jz^k)=k!y^{j+k}\neq 0$, where nonvanishing follows from the domain properties of $\mathcal{U}$. This implies, in particular,
    \begin{align*}
        (\operatorname{ad}_x-(j+k)\lambda\operatorname{id}_\mathcal{U})^{k+1}(y^jz^k)&=\operatorname{ad}_x(k!y^{j+k})-(j+k)\lambda k!y^{j+k}=(j+k)k!\lambda y^{j+k}-(j+k)k!\lambda y^{j+k}=0.
    \end{align*}
    Thus $y^j z^k$ is a generalized eigenvector of rank $k+1$ of the operator $\operatorname{ad}_x$ to eigenvalue $\lambda(j+k)$~\cite{Bronson:1991}. Furthermore, since both $x$ and $C$ commute with $x$, it holds that $\operatorname{ad}_x(p(C,x)y^jz^k)=(j+k)\lambda p(C,x)y^jz^k+kp(C,x)y^{j+1}z^{k-1}$, for all polynomials $p\in\C[a,b]$. Thus, analogously to before, for $p(C,x)\neq 0$, 
    \begin{align*}
        (\operatorname{ad}_x-(j+k)\lambda\operatorname{id}_\mathcal{U})^{k}p(C,x)y^jz^k=k!p(C,x) y^{j+k}\neq 0\qquad\text{and}\qquad(\operatorname{ad}_x-(j+k)\lambda\operatorname{id}_\mathcal{U})^{k+1}p(C,x)y^jz^k=0,
    \end{align*}
    which shows that the term $p(C,x)y^jz^k$ is a generalized eigenvector of $\operatorname{ad}_x$ of rank $k+1$ to eigenvalue $\lambda(j+k)$. The claim now follows from Lemma~\ref{lem:polynom:C:x:zero:if:x:nonnilpotent}, since $p(C,x)=0$ if and only if $p=0$.
\end{proof}

\begin{lemma}\label{lem:Cxyz:linearly:indepndent}
    Let either (a) $x,y,z\in\mathcal{U}\setminus\{0\}$ and $\lambda\in\C\setminus\{0\}$ satisfy $[x,y]=\lambda y$, $[x,z]=\lambda z+y$, $[y,z]=0$, or (b) $x,y,z\in\mathcal{U}\setminus\{0\}$ and $\lambda,\mu\in\C\setminus\{0\}$ satisfy $[x,y]=\lambda y$, $[x,z]=\mu z$, $[y,z]=0$, and $\lambda/\mu\notin \Q$. Then, for all $N\in\Np{}$, the set
    \begin{align*}
        \mathcal{B}_N:=\left\{C^{n_1}x^{n_2}y^{n_3}z^{n_4}\,\mid\, (n_1,n_2,n_3,n_4)\in\mathcal{I}_4^{(N)}\right\}
    \end{align*}
    is linearly independent.
\end{lemma}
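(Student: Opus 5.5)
We argue by contradiction: suppose a nontrivial linear relation
\begin{align*}
    \sum_{(n_1,\ldots,n_4)\in\mathcal{I}_4^{(N)}}\theta_{n_1n_2n_3n_4}C^{n_1}x^{n_2}y^{n_3}z^{n_4}=0
\end{align*}
holds. Group it by the pair $(n_3,n_4)$, writing it as $\sum_{(j,k)}p_{jk}(C,x)\,y^jz^k=0$ with $p_{jk}\in\C[a,b]$. By Lemma~\ref{lem:polynom:C:x:zero:if:x:nonnilpotent} (applicable because $y\neq0$ is an eigenvector of $\operatorname{ad}_x$ with eigenvalue $\lambda\neq0$), $p_{jk}(C,x)=0$ if and only if $p_{jk}=0$. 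It therefore suffices to show that every $p_{jk}$ vanishes, since the monomials $a^{n_1}b^{n_2}$ are linearly independent in $\C[a,b]$, which then recovers $\theta=0$.

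The key tool is a generalized eigenspace decomposition for $\operatorname{ad}_x$ on the finite-dimensional $\operatorname{ad}_x$-invariant space $V_N:=\spn\mathcal{B}_N$. Invariance holds because $\operatorname{ad}_x$ is a derivation that kills $C$ and $x$, and it maps $y^jz^k$ into $\spn\{y^jz^k,\,y^{j+1}z^{k-1}\}$. Each term $p_{jk}(C,x)y^jz^k$ is then a generalized eigenvector. In case (a) Lemma~\ref{lem:generalized:eigenvectors} gives eigenvalue $\lambda(j+k)$. In case (b) a direct derivation computation gives an honest eigenvector with eigenvalue $\lambda j+\mu k$. Generalized eigenvectors for distinct eigenvalues are linearly independent, so the relation splits into separate relations, one for each eigenvalue. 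In case (b), $\lambda/\mu\notin\Q$ makes $(j,k)\mapsto\lambda j+\mu k$ injective on $\Np{2}$. Each eigenvalue class is therefore a single term $p_{jk}(C,x)y^jz^k=0$, and the domain property (Lemma~\ref{lem:structural:lietarture:results}(c)) together with $y,z\neq0$ forces $p_{jk}(C,x)=0$, hence $p_{jk}=0$.

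In case (a), a class of fixed $s=j+k$ still contains several terms, $\sum_{k=0}^{s}p_{s-k,k}(C,x)y^{s-k}z^k=0$. Here we use the rank filtration. Let $k^*$ be the largest $k$ with $p_{s-k,k}\neq0$ and apply $(\operatorname{ad}_x-s\lambda\operatorname{id}_\mathcal{U})^{k^*}$. By the computation in the proof of Lemma~\ref{lem:generalized:eigenvectors}, every term with $k<k^*$ is annihilated, since its rank is $k+1\le k^*$. The top term becomes $k^*!\,p_{s-k^*,k^*}(C,x)\,y^{s}$. This must vanish, so the domain property and Lemma~\ref{lem:polynom:C:x:zero:if:x:nonnilpotent} give $p_{s-k^*,k^*}=0$, contradicting the choice of $k^*$. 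Hence all $p_{jk}$ vanish.

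The main obstacle is justifying that these generalized eigenvector arguments take place inside a finite-dimensional invariant subspace, so that linear independence of generalized eigenspaces applies. The operator $\operatorname{ad}_x$ on all of $\mathcal{U}$ is not a priori locally finite, so we must work in $V_N$ (or in the span of the finitely many terms and their $\operatorname{ad}_x$-iterates). Applying polynomials in $\operatorname{ad}_x$ is legitimate there because $\operatorname{ad}_x$ commutes with multiplication by $p(C,x)$. The remaining effort is bookkeeping that this reduction is carried out cleanly.
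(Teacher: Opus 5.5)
Your proposal is correct and follows essentially the same route as the paper. Both group the relation by $(n_3,n_4)$ and split it into generalized eigenspaces of $\operatorname{ad}_x$, with eigenvalues $\lambda(n_3+n_4)$ in case (a) and $\lambda n_3+\mu n_4$ in case (b). Both then remove the top-rank coefficient with a power of $\operatorname{ad}_x-s\lambda\operatorname{id}_{\mathcal{U}}$ and finish with the domain property and Lemma~\ref{lem:polynom:C:x:zero:if:x:nonnilpotent}. The only cosmetic difference is that the paper applies $(\operatorname{ad}_x-\lambda n\operatorname{id}_{\mathcal{U}})^{n}$, then $(\operatorname{ad}_x-\lambda n\operatorname{id}_{\mathcal{U}})^{n-1}$, and so on, rather than choosing a maximal $k^*$. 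Your finite-dimensionality concern is harmless, since $\spn\mathcal{B}_N$ is indeed $\operatorname{ad}_x$-invariant, and independence of generalized eigenvectors for distinct eigenvalues holds in any vector space anyway.
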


\begin{proof}
    For convenience, let us write $\boldsymbol{n}:=(n_1,n_2,n_3,n_4)\in\Np{4}$. Consider the identity
    \begin{align}\label{eqn:subcliam:cxyz:linear:independent}
        \sum_{\boldsymbol{n}\in\mathcal{I}_4^{(N)}}\alpha_{\boldsymbol{n}}C^{n_1}x^{n_2}y^{n_3}z^{n_4}=0.
    \end{align}
    Next, define, for fixed $(n_3,n_4)\in\mathcal{I}_2^{(N)}$, the polynomial
    \begin{align*}
        p_{n_3,n_4}(a,b):=\sum_{(n_1,n_2)\in\mathcal{I}_2^{(N-n_3-n_4)}}\alpha_{\boldsymbol{n}}a^{n_1}b^{n_2}\in\C[a,b].
    \end{align*}
    Thus Eq.~\ref{eqn:subcliam:cxyz:linear:independent} reduces to
    \begin{align}\label{eqn:subcliam:cxyz:linear:independent:V2}
        \sum_{(n_3,n_4)\in\mathcal{I}_2^{(N)}}p_{n_3,n_4}(C,x)y^{n_3}z^{n_4}&=0.
    \end{align}
    Clearly $[x,p_{n_3,n_4}(C,x)]=0$ because $C$ is the Casimir element and $[x,x]=0$.
    
    We now group the sum in Eq.~\ref{eqn:subcliam:cxyz:linear:independent:V2} by $n=n_3+n_4$, such that it reads
    \begin{align*}
        \sum_{n=0}^N\sum_{(n_3,n_4)\in\Bar{\mathcal{I}}_2^{(n)}}p_{n_3,n_4}(C,x)y^{n_3}z^{n_4}&=0.
    \end{align*}
    For convenience, we define
    \begin{align*}
        G_n:=\sum_{(n_3,n_4)\in\Bar{\mathcal{I}}_2^{(n)}}p_{n_3,n_4}(C,x)y^{n_3}z^{n_4},\qquad\text{and}\qquad G_{n_3,n_4}:=p_{n_3,n_4}(C,x)y^{n_3}z^{n_4}.
    \end{align*}
    First, let us consider case (a). Then, by Lemma~\ref{lem:generalized:eigenvectors}, all non-zero $G_{n_3,n_4}$ are generalized eigenvectors of $\operatorname{ad}_x$ with rank $n_4+1$ to the eigenvalue $\lambda (n_3+n_4)$~\cite{Bronson:1991}. By linearity of $\operatorname{ad}_x$, it follows that all non-zero $G_n$ are generalized eigenvectors of rank at most $n+1$ to the eigenvalue $\lambda n$. Note that all $\lambda n$ are distinct because $\lambda\neq 0$. Recall that generalized eigenvectors to distinct eigenvalues are linearly independent~\cite{Axler:2023}. Thus, it follows that $G_n=0$ for all $n\leq N$. For each $n$, we can now compute
    \begin{align*}
        0&=(\operatorname{ad}_x-\lambda n\operatorname{id}_\mathcal{U})^nG_n=\sum_{(n_3,n_4)\in\Bar{\mathcal{I}}_2^{(n)}}p_{n_3,n_4}(C,x)(\operatorname{ad}_x-\lambda n\operatorname{id}_\mathcal{U})^ny^{n_3}z^{n_4}\\
        &=\sum_{(n_3,n_4)\in\Bar{\mathcal{I}}_2^{(n)}}p_{n_3,n_4}(C,x) n_4!\delta_{n_4,n}y^{n}=n!p_{0,n}(C,x)y^n.
    \end{align*}
    Since $\mathcal{U}$ is a domain by Lemma~\ref{lem:structural:lietarture:results}, it holds that $y^n\neq 0$, and consequently $p_{0,n}(C,x)=0$. This already settles the case $n=0$. For $n\geq1$, similarly,
    \begin{align*}
        0&=(\operatorname{ad}_x-\lambda n\operatorname{id}_\mathcal{U})^{n-1}G_n=n! p_{0,n}(C,x) y^{n-1}z+(n-1)!p_{1,n-1}(C,x)y^n.
    \end{align*}
    Thus, it must also hold that $p_{1,n-1}(C,x)=0$. Iterating this process yields consequently that $p_{n_3,n_4}(C,x)=0$ for all $n_3+n_4=n\leq N$.

    \vspace{0.2cm}

    Next, let us consider case (b). In this case, we have that
    \begin{align*}
        \operatorname{ad}_x\left(p_{n_3,n_4}(C,x)y^{n_3}z^{n_4}\right)=p_{n_3,n_4}(C,x)\left( \operatorname{ad}_x(y^{n_3})z^{n_4}+y^{n_3}\operatorname{ad}_x(z^{n_4})\right)=(\lambda n_3+\mu n_4)p_{n_3,n_4}(C,x)y^{n_3}z^{n_4},
    \end{align*}
    because $\operatorname{ad}_x:\mathcal{U}\to\mathcal{U}$ is a derivation. Thus all non-zero $G_{n_3,n_4}$ are eigenvectors of $\operatorname{ad}_x$ to the eigenvalues $(\lambda n_3+\mu n_4)$. However, since $\lambda/\mu\notin\Q$, these eigenvalues are distinct for all $(n_3,n_4)\in\mathcal{I}_2^{(N)}$. Thus, the nonzero $G_{n_3,n_4}$ are linearly independent, so Eq.~\ref{eqn:subcliam:cxyz:linear:independent:V2} implies $G_{n_3,n_4}=0$ for all $(n_3,n_4)\in\mathcal{I}_2^{(N)}$. Since $\mathcal{U}$ is a domain and $y\neq 0\neq z$, it follows that $p_{n_3,n_4}(C,x)=0$ in case (b).

    \vspace{0.2cm}
    
    However, in both cases (a) and (b), by Lemma~\ref{lem:polynom:C:x:zero:if:x:nonnilpotent}, $p_{n_3,n_4}(C,x)=0$ is possible if and only if $p_{n_3,n_4}=0$ for all such $n_3,n_4$, which implies that $\alpha_{\boldsymbol{n}}=0$ for all $\boldsymbol{n}\in\mathcal{I}_4^{(N)}$. This shows that $\mathcal{B}_N$ is linearly independent for all $N\in\Np{}$.
\end{proof}

\begin{lemma}\label{lem:adx:diagonalizable}
    Let $\g$ be a finite-dimensional non-nilpotent  solvable Lie subalgebra of $\mathcal{U}$. Moreover, let $x\in \g$ such that $\operatorname{ad}_x$ is not nilpotent. Then $\operatorname{ad}_x:\g\to\g$ is diagonalizable.
\end{lemma}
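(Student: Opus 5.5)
We argue by contradiction, using the growth bound implied by Lemma~\ref{lem:Cxyz:linearly:indepndent}(a). Suppose $\operatorname{ad}_x:\g\to\g$ is not diagonalizable. By Corollary~\ref{cor:derived:solvable:algebra:abelian}, $\mathcal{D}\g$ is abelian, and since $\operatorname{ad}_x(\g)\subseteq\mathcal{D}\g$, all generalized eigenvectors of $\operatorname{ad}_x$ for non-zero eigenvalues lie in $\mathcal{D}\g$.

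\emph{Step 1: the zero eigenvalue.} First we dispose of non-diagonalizability at eigenvalue $0$. If there were $u\in\g$ with $\operatorname{ad}_x u=w\neq0$ and $\operatorname{ad}_x w=0$, then $w\in\mathcal{D}\g\subseteq[\mathcal{U},\mathcal{U}]$ and $w\in C_\mathcal{U}(x)$. We would then use the acc. The element $x$ is non-central, since $\operatorname{ad}_x$ is not nilpotent. Hence $C_\mathcal{U}(x)$ is abelian and contains $x$, $w$ and $p(C,x)$. Then $[u,w]$ must be examined. The cleaner route is to note that $x$ is a non-zero eigenvalue direction. Choose $y\in\mathcal{D}\g$ with $[x,y]=\lambda y$, $\lambda\neq0$. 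Then $w$ and $y$ commute, since both lie in $\mathcal{D}\g$, which is abelian. By the derivation property, $\operatorname{ad}_x(u)=w$ gives $[x,u]=w$ with $[x,w]=0$, so $x$ and $u$ generate a Heisenberg-type relation. This contradicts the statement that the nilpotent subalgebra $\spn\{x,u,w\}$ is abelian, by Lemma~\ref{lem:nilpotent:U:subalgebras}, provided $[u,w]=0$. If $u\notin\mathcal{D}\g$, we instead consider the Jordan block inside $\mathcal{D}\g$ only, after replacing $x$ by its image. I expect this step to need the most care. The cleanest version uses $w\in C_\mathcal{U}(x)\cap[\mathcal{U},\mathcal{U}]$: the acc gives that $w$ commutes with everything commuting with $x$, and combining this with Lemma~\ref{lem:nilpotent:U:subalgebras} applied to $\spn\{x,u,w\}$ (nilpotent, since $[u,w]$ lies in the abelian $\mathcal{D}\g$ once $u$ is adjusted) yields $w=0$.

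\emph{Step 2: non-zero eigenvalues.} Suppose $\operatorname{ad}_x$ has a Jordan block of size at least $2$ for some $\lambda\neq0$. Then, inside that block, we can pick $y,z\in\mathcal{D}\g$ with $[x,y]=\lambda y$ and $[x,z]=\lambda z+y$ (after rescaling), and $[y,z]=0$ because $\mathcal{D}\g$ is abelian. This is exactly hypothesis (a) of Lemma~\ref{lem:Cxyz:linearly:indepndent}.

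\emph{Step 3: the growth contradiction.} We now contradict finiteness through a degree count. All monomials $C^{n_1}x^{n_2}y^{n_3}z^{n_4}$ have PBW degree at most $2n_1+D(n_2+n_3+n_4)$, where $D=\max\{\deg x,\deg y,\deg z\}$. So $\mathcal{B}_N$ lies in the subspace of $\mathcal{U}$ of PBW degree at most $\max(2,D)N$. That subspace has dimension $\binom{\max(2,D)N+3}{3}=O(N^3)$, since $\mathcal{U}\cong\C[X,Y,Z]$ as filtered vector spaces. On the other hand, $|\mathcal{B}_N|=|\mathcal{I}_4^{(N)}|=\binom{N+4}{4}$, which grows like $N^4$. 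For large $N$ this contradicts the linear independence established in Lemma~\ref{lem:Cxyz:linearly:indepndent}(a). Hence every Jordan block of $\operatorname{ad}_x$ is trivial, and $\operatorname{ad}_x$ is diagonalizable.

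The essential idea is this Gelfand--Kirillov dimension count: $\mathcal{U}$ has GK dimension $3$, whereas four algebraically independent-like elements would force dimension $4$. The main obstacle is the zero eigenvalue case in Step 1, where Lemma~\ref{lem:Cxyz:linearly:indepndent} does not apply directly and the argument must go through the acc and the splitting $\mathcal{Z}(\mathcal{U})\cap[\mathcal{U},\mathcal{U}]=\{0\}$.
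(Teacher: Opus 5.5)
Your Steps 2 and 3, the non-zero eigenvalue case, follow the paper's argument. Since $y,z\in\mathcal{D}\g$ commute, Lemma~\ref{lem:Cxyz:linearly:indepndent}(a) makes $\mathcal{B}_N$ linearly independent, and its $\binom{N+4}{4}$ elements cannot fit in the $O(N^3)$-dimensional space of PBW degree at most $\max(2,D)N$. The gap is in Step 1, where you never actually derive $w=0$. Your final ``cleanest version'' relies on $\spn\{x,u,w\}$ being a nilpotent Lie subalgebra, but you have not shown it is closed under the bracket. The justification ``$[u,w]$ lies in the abelian $\mathcal{D}\g$ once $u$ is adjusted'' does not establish this: $u$ need not lie in $\mathcal{D}\g$, and knowing $[u,w]\in\mathcal{D}\g$ does not make $[u,w]$ vanish or lie in $\spn\{x,u,w\}$. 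Applying the acc to $x$, so that $C_\mathcal{U}(x)$ is abelian, does not help either, because neither $u$ nor $y$ commutes with $x$.

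The missing idea is to apply the acc to the centralizer of $w$ rather than of $x$, and you already have every ingredient. We have $[x,w]=0$, and $[y,w]=0$ because $y,w\in\mathcal{D}\g$, which is abelian. So $x,y\in C_\mathcal{U}(w)$ while $[x,y]=\lambda y\neq 0$. Hence $C_\mathcal{U}(w)$ is non-abelian, and the acc forces $w\in\mathcal{Z}(\mathcal{U})$. But $w=[x,u]\in[\mathcal{U},\mathcal{U}]$, so Lemma~\ref{lem:structural:lietarture:results}(a) gives $w=0$, a contradiction. This is exactly the paper's zero-eigenvalue argument.

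If you prefer your Heisenberg route, the same trick repairs it. The element $[u,w]$ commutes with $x$ by the Jacobi identity, since $[x,[u,w]]=[w,w]+[u,[x,w]]=0$, and it commutes with $y$ because both lie in $\mathcal{D}\g$. The acc then gives $[u,w]=0$, so $\spn\{x,u,w\}$ is a non-abelian nilpotent subalgebra, contradicting Lemma~\ref{lem:nilpotent:U:subalgebras}. This is only a detour around the direct argument above.
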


\begin{proof}
    We show this by contradiction and assume that $\operatorname{ad}_x$ is not diagonalizable. By the Jordan decomposition, the endomorphism $\operatorname{ad}_x$ is similar to an endomorphism on $\g$ that has Jordan block form. First, suppose $\operatorname{ad}_x$ has a generalized eigenspace to eigenvalue $\lambda\neq 0$ with non-trivial Jordan block structure~\cite{Axler:2023}. Then, there exist $y,z\in \g\setminus\{0\}$ such that
    \begin{align*}
        \operatorname{ad}_x(y)&=\lambda y,\;&\;\operatorname{ad}_x(z)&=\lambda z+y.
    \end{align*}
    Since $y=\lambda^{-1}[x,y]$ and $z=\lambda^{-2}[x,\lambda z-y]$, both $y$ and $z$ lie in $\mathcal{D}\g$. Corollary~\ref{cor:derived:solvable:algebra:abelian} now implies that $[y,z]=0$. It follows now from 
    Lemma~\ref{lem:Cxyz:linearly:indepndent} that, for all $N\in\Np{}$, the set 
    \begin{align*}
        \mathcal{B}_N:=\left\{C^{n_1}x^{n_2}y^{n_3}z^{n_4}\,\middle\mid\, (n_1,n_2,n_3,n_4)\in\mathcal{I}_4^{(N)}\right\}
    \end{align*}
    is linearly independent. We will now show that this is impossible. For this, set $d:=\max\{\deg(C),\deg(x),\deg(y),\deg(z)\}$ and consider the subspace
    \begin{align*}
        W_N:=\spn\{C^{n_1}x^{n_2}y^{n_3}z^{n_4}\,\mid\,(n_1,n_2,n_3,n_4)\in\mathcal{I}_4^{(N)}\}\subseteq\mathcal{U},
    \end{align*}
    for some $N\in\N_{\geq0}$. By Lemma~\ref{lem:Cxyz:linearly:indepndent}, the monomials $C^{n_1}x^{n_2}y^{n_3}z^{n_4}$ are all linearly independent. Thus,
    \begin{align*}
        \dim(W_N)&=\sum_{n=0}^N\dim\left(\spn\{C^{n_1}x^{n_2}y^{n_3}z^{n_4}\,\mid\,(n_1,n_2,n_3,n_4)\in\Bar{\mathcal{I}}_4^{(n)}\}\right)\\
        &=\sum_{n=0}^N\binom{n+3}{3}=\frac{(N+1)(N+2)(N+3)(N+4)}{24}\in\Theta(N^4),
    \end{align*}
    by the usual combinatorial argument that  distributing $n$ indistinguishable balls among $k$ distinguishable urns yields $(n+k-1)!/(n!(k-1)!)$ distinct possibilities~\cite{Stanley:2011,Wagner:2014}. However, by construction, it must also hold that
    \begin{align*}
        W_N\subseteq\spn\{X^{k_1}Y^{k_2}Z^{k_3}\,\mid\,(k_1,k_2,k_3)\in\mathcal{I}_3^{(dN)}\}=:V_N,
    \end{align*}
    where, by a similar argument as before, it must hold that
    \begin{align*}
        \dim(V_N)&=\sum_{n=0}^{dN}\dim\left(\spn\{X^{k_1}Y^{k_2}Z^{k_3}\,\mid\,(k_1,k_2,k_3)\in\Bar{\mathcal{I}}_3^{(n)}\}\right)\\
        &=\sum_{n=0}^{dN}\binom{n+2}{2}=\frac{(dN+1)(dN+2)(dN+3)}{6}\in\Theta(N^3).
    \end{align*}
    Since $W_N\subseteq V_N$, it must hold that $\dim(W_N)\leq \dim(V_N)$, which is incompatible with the obtained asymptotic bounds for sufficiently large $N$, since $d$ is fixed. Thus, the initial assumption must have been incorrect, and we can conclude that $\operatorname{ad}_x$ does not possess non-trivial Jordan blocks to a non-zero eigenvalue.

    \vspace{0.2cm}

    Next, let us assume $\operatorname{ad}_x$ has a generalized eigenspace with eigenvalue $\lambda_*=0$ and non-trivial Jordan block structure. This implies that there exist $y,z\in\g\setminus\{0\}$ such that $\operatorname{ad}_x(y)=0$ and $\operatorname{ad}_x(z)=y$. We assumed that $\operatorname{ad}_x$ is not nilpotent. Thus, there exists a non-zero eigenvalue $\lambda\in\C\setminus\{0\}$ and non-zero eigenvector $v\in\g\setminus\{0\}$ such that $\operatorname{ad}_x(v)=\lambda v\neq 0$. Thus, both $y=[x,z]$ and $v=\lambda^{-1}[x,v]$ belong to $\mathcal{D}\g$. Recall that we assume that $\g$ is solvable. Then, by Corollary~\ref{cor:derived:solvable:algebra:abelian} $\mathcal{D}\g$ is abelian. Hence $[v,y]=0$. It follows that $x,v\in C_\mathcal{U}(y)$. Supposing that $y\notin \mathcal{Z}(\mathcal{U})$ would enforce, by the acc of $\mathcal{U}$, that $[x,v]=0$, contradicting the earlier assumption that $v$ is a non-zero eigenvector of $\operatorname{ad}_x$ to a non-zero eigenvalue. Hence, $y\in \mathcal{Z}(\mathcal{U})$. However, $y=[x,z]\in[\mathcal{U},\mathcal{U}]$, and consequently, by Lemma~\ref{lem:structural:lietarture:results}, $0\neq y\in\mathcal{Z}(\mathcal{U})\cap[\mathcal{U},\mathcal{U}]=\{0\}$, which is impossible. Thus, the initial assumption is false. Together with the exclusion of non-trivial Jordan blocks at non-zero eigenvalues, it follows that $\operatorname{ad}_x:\g\to\g$ is diagonalizable.
\end{proof}

\begin{lemma}\label{lem:adx:positive:rational:ratios}
    Let $\g$ be a finite-dimensional non-nilpotent  solvable Lie subalgebra of $\mathcal{U}$, and let $x\in\g$ be such that $\operatorname{ad}_x:\g\to\g$ is not nilpotent. Then $\lambda/\mu\in\Q_{>0}$ for all $\lambda,\mu\in\operatorname{Spec}(\operatorname{ad}_x)\setminus\{0\}$.
\end{lemma}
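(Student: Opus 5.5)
The plan is to take two non-zero eigenvalues $\lambda,\mu$ of $\operatorname{ad}_x$ with eigenvectors $y,z\in\g\setminus\{0\}$, so that $[x,y]=\lambda y$ and $[x,z]=\mu z$; by Lemma~\ref{lem:adx:diagonalizable} such eigenvectors exist for every point of the spectrum. As in the proof of Lemma~\ref{lem:adx:diagonalizable}, $y=\lambda^{-1}[x,y]$ and $z=\mu^{-1}[x,z]$ lie in $\mathcal{D}\g$, which is abelian by Corollary~\ref{cor:derived:solvable:algebra:abelian}, so $[y,z]=0$. We then show that $\lambda/\mu$ is rational, and afterwards that it is positive.

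\emph{Rationality.} Suppose $\lambda/\mu\notin\Q$. Then $x,y,z$ satisfy hypothesis (b) of Lemma~\ref{lem:Cxyz:linearly:indepndent}, so every set $\mathcal{B}_N$ is linearly independent. I then repeat the growth comparison from the proof of Lemma~\ref{lem:adx:diagonalizable}. The span $W_N$ of $\mathcal{B}_N$ has dimension $\Theta(N^4)$. It is contained in the span $V_N$ of PBW monomials of degree at most $dN$, where $d$ bounds the degrees of $C,x,y,z$, and $\dim(V_N)\in\Theta(N^3)$. This is a contradiction for large $N$, so $\lambda/\mu\in\Q$.

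\emph{Positivity.} Suppose instead $\lambda/\mu=-p/q$ with $p,q\in\N_{>0}$, so that $q\lambda+p\mu=0$. Set $w:=y^qz^p$; it is non-zero because $\mathcal{U}$ is a domain (Lemma~\ref{lem:structural:lietarture:results}). Since $\operatorname{ad}_x$ is a derivation and $y,z$ commute, $[x,w]=(q\lambda+p\mu)w=0$, and clearly $[y,w]=0$. Hence $x,y\in C_{\mathcal{U}}(w)$. If $w\notin\mathcal{Z}(\mathcal{U})$, the acc of $\mathcal{U}$ forces $[x,y]=0$, contradicting $\lambda y\neq0$. So the remaining case is $w\in\mathcal{Z}(\mathcal{U})=\C[C]$, i.e.\ $y^qz^p=f(C)$ for some non-zero polynomial $f$.

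I expect this central case to be the main obstacle. If $f$ is constant, then $y$ is a unit of $\mathcal{U}$. Because the associated graded algebra of $\mathcal{U}$ is the polynomial ring $\C[X,Y,Z]$, degrees add under multiplication, so units have degree $0$ and are scalars. A scalar $y$ is central, hence $[x,y]=0$, a contradiction. For non-constant $f$, I would pass to principal symbols in $\C[X,Y,Z]$: from $\sigma(y)^q\sigma(z)^p=c\,(X^2+Y^2+Z^2)^k$ and irreducibility of the non-degenerate quadric, $\sigma(y)$ and $\sigma(z)$ are scalar multiples of powers of $\sigma(C)$. Then I would subtract the matching multiple of a power of $C$ from $y$ and induct on degree, using that $\operatorname{ad}_x$ annihilates $C$. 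The goal is to reach a non-zero $\operatorname{ad}_x$-eigenvector with non-zero eigenvalue of degree $0$, i.e.\ a scalar, which is impossible.

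If this induction becomes unwieldy, my first fallback is to exploit Lemma~\ref{lem:polynom:C:x:zero:if:x:nonnilpotent}. The idea is to apply $\operatorname{ad}_x$-weight considerations to $y^{q}z^{p}=f(C)$ multiplied by suitable powers of $y$, using $x^\beta y=y(x+\lambda\unity)^\beta$, in order to produce a non-trivial polynomial relation in $C$ and $x$.
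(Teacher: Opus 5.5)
Your rationality step is the same as the paper's: hypothesis (b) of Lemma~\ref{lem:Cxyz:linearly:indepndent}, then the $\Theta(N^4)$ versus $\Theta(N^3)$ dimension count. Your reduction of the positivity step to the case $w=y^qz^p\in\mathcal{Z}(\mathcal{U})$ via the acc is correct, and so is your treatment of constant $f$.

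The gap is the remaining central case with non-constant $f$, which you do not close. The induction you sketch fails as described. Replacing $y$ by $y-c_1C^a$ destroys the eigenvector property, since $\operatorname{ad}_x(y-c_1C^a)=\lambda y\neq\lambda(y-c_1C^a)$ for $c_1\neq 0$. It also destroys the relation $y^qz^p\in\C[C]$, so the inductive hypothesis is not preserved. A symbol argument can be rescued, but only by adding two facts: $y=\lambda^{-1}[x,y]\in[\mathcal{U},\mathcal{U}]$, and principal symbols of elements of $[\mathcal{U},\mathcal{U}]$ have no $\operatorname{ad}$-invariant component, so they cannot be non-zero multiples of $(X^2+Y^2+Z^2)^a$. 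That is the centre--commutator splitting in another form. The fallback via Lemma~\ref{lem:polynom:C:x:zero:if:x:nonnilpotent} is too vague to assess.

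The missing idea is that $w$ is itself a commutator. Since $[y,z]=0$ and $[x,z^p]=p\mu z^p$,
\[
[xy^q,z^p]=x[y^q,z^p]+[x,z^p]\,y^q=p\mu\,z^py^q=p\mu\,w,
\]
so $w=(p\mu)^{-1}[xy^q,z^p]\in[\mathcal{U},\mathcal{U}]$. Combined with $w\in\mathcal{Z}(\mathcal{U})$, the splitting $\mathcal{U}=[\mathcal{U},\mathcal{U}]\oplus\mathcal{Z}(\mathcal{U})$ of Lemma~\ref{lem:structural:lietarture:results}(a) forces $w=0$. This contradicts $w\neq0$, which holds because $\mathcal{U}$ is a domain. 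This is how the paper concludes. It treats constant and non-constant $f$ uniformly, so neither the unit argument nor principal symbols are needed.
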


\begin{proof}
    For ease of presentation, let us write $\boldsymbol{n}:=(n_1,n_2,n_3,n_4)\in\Np{4}$ and define the sets $\mathcal{S}:= \operatorname{Spec}(\operatorname{ad}_x)$ and $\mathcal{S}^*:=\mathcal{S}\setminus\{0\}$. Note that both sets are non-empty and $0\in\mathcal{S}$. Indeed, $\spn\{x\}\subseteq\ker(\operatorname{ad}_x)$ shows that $0\in\mathcal{S}$, and the fact that $\operatorname{ad}_x$ is not nilpotent implies that there exists a non-zero eigenvalue in $\mathcal{S}$~\cite{Axler:2023}.

    \vspace{0.2cm}

    We first want to show that the ratio of all non-zero eigenvalues of $\operatorname{ad}_x$ is a non-zero rational. We show this claim by contradiction, and assume that $y,z\in\g\setminus\{0\}$ are non-zero eigenvectors of $\operatorname{ad}_x$ to non-zero eigenvalues $\lambda$ and $\mu$, respectively, which satisfy $\lambda/\mu\notin \Q$. Here, it holds again that $[y,z]=0$ by Corollary~\ref{cor:derived:solvable:algebra:abelian} because $y=\lambda^{-1}[x,y]\in\mathcal{D}\g$ and $z=\mu^{-1}[x,z]\in\mathcal{D}\g$. Let $N\in\Np{}$, and consider a polynomial $q_N(C,x,y,z)$ of the form
    \begin{align*}
        q_N(C,x,y,z)=\sum_{\boldsymbol{n}\in\mathcal{I}_4^{(N)}}\alpha_{\boldsymbol{n}}C^{n_1}x^{n_2}y^{n_3}z^{n_4}=\sum_{(n_3,n_4)\in\mathcal{I}_2^{(N)}}p_{n_3,n_4}(C,x)y^{n_3}z^{n_4},
    \end{align*}
    where, as before in the proof of Lemma~\ref{lem:Cxyz:linearly:indepndent}, we defined
    \begin{align*}
        p_{n_3,n_4}(a,b):=\sum_{(n_1,n_2)\in\mathcal{I}_2^{(N-n_3-n_4)}}\alpha_{\boldsymbol{n}}a^{n_1}b^{n_2}\in\C[a,b].
    \end{align*}
    Lemma~\ref{lem:Cxyz:linearly:indepndent} implies here that the set 
    \begin{align*}
        \mathcal{B}_N:=\{C^{n_1}x^{n_2}y^{n_3}z^{n_4}\,\mid\,(n_1,n_2,n_3,n_4)\in\mathcal{I}_4^{(N)}\}
    \end{align*}
    is linearly independent for all $N\in\Np{}$. However, a similar argument as employed in Lemma~\ref{lem:adx:diagonalizable} shows that this is impossible. Indeed, set $d:=\max\{\deg(C),\deg(x),\deg(y),\deg(z)\}$. Every member of $\mathcal{B}_N$ belongs to the subspace
    \begin{align*}
        V_N:=\spn\left\{ X^{k_1}Y^{k_2}Z^{k_3}\,\mid\,(k_1,k_2,k_3)\in\mathcal{I}_3^{(dN)}\right\}.
    \end{align*}
    Linear independence would therefore imply
    \begin{align*}
        \binom{N+4}{4}\leq\dim(V_N)=\binom{dN+3}{3},
    \end{align*}
    which is impossible for all sufficiently large $N$. Thus, there must exist some $N\in \Np{}$ and coefficients $\alpha_{\boldsymbol{n}}$, not all zero, such that $q_N(C,x,y,z)=0$. We may now apply $\operatorname{ad}_x$ to this polynomial relation to find that
    \begin{align*}
        0&=\operatorname{ad}_x(q_N(C,x,y,z))=\sum_{(n_3,n_4)\in\mathcal{I}_2^{(N)}}p_{n_3,n_4}(C,x)(n_3\lambda+n_4\mu)y^{n_3}z^{n_4}.
    \end{align*}
    Each non-zero term $p_{n_3,n_4}(C,x)y^{n_3}z^{n_4}$ is an eigenvector of $\operatorname{ad}_x:\mathcal{U}\to\mathcal{U}$ to the eigenvalue $n_3\lambda+n_4\mu$. The assumption $\lambda/\mu \notin \Q$ now allows the equality $n_3\lambda+n_4\mu=n_3'\lambda+ n_4'\mu$ to hold if and only if $n_3=n_3'$ and $n_4=n_4'$. Thus the non-zero elements $p_{n_3,n_4}(C,x)y^{n_3}z^{n_4}$ are eigenvectors to distinct eigenvalues, and therefore linearly independent~\cite{Axler:2023}. Employing this linear independence along with the original relation $q_N(C,x,y,z)=0$, including the term with $n_3=n_4=0$, implies that $p_{n_3,n_4}(C,x)y^{n_3}z^{n_4}=0$ for all $n_3,n_4\in\Np{}$ with $n_3+n_4\leq N$. Since $\mathcal{U}$ is a domain and $y\neq 0\neq z$, it must follow that $p_{n_3,n_4}(C,x)=0$ for every such $(n_3,n_4)\in\mathcal{I}_2^{(N)}$. By Lemma~\ref{lem:polynom:C:x:zero:if:x:nonnilpotent}, this is only possible if $p_{n_3,n_4}=0$. Consequently, all coefficients $\alpha_{\boldsymbol{n}}$ vanish, a contradiction. Thus $\lambda/\mu\notin\Q$ is impossible.

    \vspace{0.2cm}

    In the second step, we want to show that the ratios of all non-zero eigenvalues $\lambda,\mu\in\mathcal{S}^*$ are furthermore positive rationals. To the contrary, suppose $\lambda/\mu<0$, and choose non-zero eigenvectors $y,z\in\g\setminus\{0\}$ with $[x,y]=\lambda y$ and $[x,z]=\mu z$. Then, there exist $r,s\in\N_{>0}$ such that $\lambda r+\mu s=0$, because $\lambda/\mu\in \Q$. Moreover, $y=\lambda^{-1}[x,y]\in\mathcal{D}\g$ and $z=\mu^{-1}[x,z]\in\mathcal{D}\g$. Hence, $[y,z]=0$ by Corollary~\ref{cor:derived:solvable:algebra:abelian}.  Set $u:= y^r z^s$. Then $\operatorname{ad}_x(u)=(\lambda r+\mu s)u=0$. Moreover, because $y,z\in \mathcal{D}\g$, $[y,z]=0$, and consequently $[y,u]=0=[z,u]$. It follows that $x,y,z\in C_\mathcal{U}(u)$. If $u$ were non-central with respect to $\mathcal{U}$, the acc of $\mathcal{U}$ from Lemma~\ref{lem:structural:lietarture:results} would imply that $[x,y]=0$, contradicting the assumption that $[x,y]=\lambda y\neq 0$. Hence $u\in \mathcal{Z}(\mathcal{U})$. However, it also holds that
    \begin{align*}
        \frac{1}{s\mu}[xy^r,z^s]=y^r z^s=\frac{1}{s\mu}y^r[x,z^s]=\frac{s\mu}{s\mu}y^rz^s=u\in[\mathcal{U},\mathcal{U}].
    \end{align*}
    Since $\mathcal{U}$ is a domain, $0\neq  u\in\mathcal{Z}(\mathcal{U})\cap[\mathcal{U},\mathcal{U}]=\{0\}$, a contradiction to Lemma~\ref{lem:structural:lietarture:results}. The initial assumption must therefore have been incorrect, and $\lambda/\mu \in\Q_{>0}$ for all $\lambda,\mu\in\mathcal{S}^*$.
\end{proof}

\begin{lemma}\label{lem:solvable:non:nilpotent}
    Let $\g$ be a finite-dimensional non-nilpotent  solvable Lie subalgebra of $\mathcal{U}$. Then $\g\cong\mathfrak{r}(\boldsymbol{d},\boldsymbol{m})\oplus\C^{d_0}$, for some $\boldsymbol{d},\boldsymbol{m}\in \N_{>0}^m$ and $d_0\in\Np{}$, where $m\in\N_{>0}$ and $d_j=d_k$ if and only if $j=k$.
\end{lemma}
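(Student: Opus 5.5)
Fix $x\in\g$ with $\operatorname{ad}_x$ not nilpotent. By Lemma~\ref{lem:adx:diagonalizable}, $\operatorname{ad}_x:\g\to\g$ is diagonalizable. By Lemma~\ref{lem:adx:positive:rational:ratios}, all its non-zero eigenvalues are positive rational multiples of a single one. After rescaling $x$ by a suitable complex constant, the non-zero eigenvalues become pairwise distinct positive integers $d_1<\ldots<d_m$, with multiplicities $m_1,\ldots,m_m$. This gives the decomposition
\begin{align*}
    \g=\g_0\oplus\bigoplus_{j=1}^m\g_{d_j},\qquad \g_{d_j}=\{v\in\g\,\mid\,[x,v]=d_jv\},\qquad \g_0=\ker(\operatorname{ad}_x)\ni x.
\end{align*}
Every $\g_{d_j}$ lies in $\mathcal{D}\g$, since $v=d_j^{-1}[x,v]$. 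Hence $\bigoplus_j\g_{d_j}$ is abelian by Corollary~\ref{cor:derived:solvable:algebra:abelian}.

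The main step is to analyze $\g_0=C_\g(x)$. Since $x\notin\mathcal{Z}(\mathcal{U})$ (it has a non-zero ad-eigenvalue), the acc from Lemma~\ref{lem:structural:lietarture:results} makes $C_\mathcal{U}(x)$ abelian. Hence $\g_0$ is abelian.

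Next I show that every $w\in\g_0$ acts on each $\g_{d_j}$ as a scalar multiple of the action of $x$ there. Take $v\in\g_{d_j}$ non-zero. Then $[w,v]\in\g_{d_j}\subseteq\mathcal{D}\g$ by the Jacobi identity, and $[v,[w,v]]=0$ because $\mathcal{D}\g$ is abelian. Also $v\notin\mathcal{Z}(\mathcal{U})$, because $v\in[\mathcal{U},\mathcal{U}]\setminus\{0\}$ and Lemma~\ref{lem:structural:lietarture:results}(a) applies. So $C_\mathcal{U}(v)$ is abelian by the acc. It contains $\bigoplus_j\g_{d_j}$, but it does not contain $x$.

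A cleaner route is to decompose $\g_0=\spn\{x\}\oplus(\g_0\cap\mathcal{D}\g)\oplus\ldots$, writing $w=\alpha x+w'$. Put $\mathfrak{a}:=\bigoplus_j\g_{d_j}$. I expect the following: if $w'$ is chosen so that $\operatorname{ad}_{w'}$ is nilpotent on $\g$, then $\operatorname{ad}_{w'}$ vanishes on $\mathfrak{a}$.

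To prove this, note that $\operatorname{ad}_{w}$ commutes with $\operatorname{ad}_x$ (as $[w,x]=0$), so it preserves each $\g_{d_j}$. Any $w$ with $\operatorname{ad}_w$ not nilpotent is itself diagonalizable by Lemma~\ref{lem:adx:diagonalizable}. The semisimple and nilpotent parts of $\operatorname{ad}_w$ are polynomials in $\operatorname{ad}_w$, so if $\operatorname{ad}_w$ is diagonalizable its nilpotent part is zero. If instead $\operatorname{ad}_w$ is nilpotent but non-zero on some $\g_{d_j}$, pick $v\in\g_{d_j}$ with $[w,v]=u\neq0$ and $[w,u]=0$. Then $u\in\mathcal{D}\g$, so $[u,y]=0$ for all $y\in\mathfrak{a}$. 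Moreover, $w$ and $x$ both lie in $C_\mathcal{U}(u)$, and $u\notin\mathcal{Z}(\mathcal{U})$ because $0\neq u\in[\mathcal{U},\mathcal{U}]$. So $C_\mathcal{U}(u)$ is abelian, yet it contains $x$ and $v$ only if $[x,v]=0$. Therefore $v\notin C(u)$, and this does not close the case directly.

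I would instead apply the acc to $u$ with the pair $x,\;u$ itself: $[x,u]=d_ju\neq0$, whereas $u$ and $x$ need not lie in a common centralizer. Here is the obstacle. I would handle it by applying Lemma~\ref{lem:adx:diagonalizable} to $x+tw$ for generic $t$. That operator is non-nilpotent, hence diagonalizable. Since $\operatorname{ad}_x$ and $\operatorname{ad}_w$ commute, $\operatorname{ad}_{x+tw}$ restricted to $\g_{d_j}$ equals $d_j+t\operatorname{ad}_w$. This is diagonalizable only if the nilpotent part of $\operatorname{ad}_w$ there vanishes. So $\operatorname{ad}_w|_{\g_{d_j}}$ is diagonalizable for every $w\in\g_0$.

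Then I would show this semisimple action is scalar, equal to $c_w d_j$ with $c_w$ independent of $j$. First, a joint eigenvector $v$ with $[w,v]=\nu v$ gives $[x,v]=d_jv$. Then apply Lemma~\ref{lem:adx:positive:rational:ratios} to $x+tw$ for all generic $t$. It forces $(d_j+t\nu)/(d_k+t\nu')\in\Q_{>0}$ for all small $t$, and hence $\nu/d_j=\nu'/d_k$. This holds across different joint eigenvectors, including ones in the same block, so $\operatorname{ad}_w|_{\mathfrak{a}}=c_w\operatorname{ad}_x|_{\mathfrak{a}}$.

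Then $z_w:=w-c_wx$ centralizes $\mathfrak{a}$ and $x$, and also all of $\g_0$ (which is abelian). So $z_w\in\mathcal{Z}(\g)$. This gives $\g=\spn\{x\}\oplus\mathfrak{a}\oplus\mathcal{Z}(\g)$ with $\mathcal{Z}(\g)\subseteq\g_0$. The bracket relations are exactly those of $\mathfrak{r}(\boldsymbol{d},\boldsymbol{m})$, with a commuting central summand of dimension $d_0:=\dim\mathcal{Z}(\g)$. Choosing bases of the $\g_{d_j}$ finishes the proof.
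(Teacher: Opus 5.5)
Your final argument is correct, but for $\g_0$ it takes a different route from the paper. The shared first part is the same: diagonalize $\operatorname{ad}_x$, rescale to integer eigenvalues, note that $\mathfrak{a}=\bigoplus_j\g_{d_j}\subseteq\mathcal{D}\g$ is abelian, and get $\g_0=C_\g(x)$ abelian from the acc.

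From there you perturb to $x+tw$ and reapply the two spectral lemmas:
\begin{itemize}
\item Lemma~\ref{lem:adx:diagonalizable} shows that $\operatorname{ad}_w|_{\g_{d_j}}$ is diagonalizable. This works because $d_j+t\operatorname{ad}_w|_{\g_{d_j}}$ is the restriction of a diagonalizable operator to an invariant subspace.
\item Lemma~\ref{lem:adx:positive:rational:ratios} then gives $\operatorname{ad}_w|_{\mathfrak{a}}=c_w\operatorname{ad}_x|_{\mathfrak{a}}$.
\end{itemize}

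The paper instead uses the acc a second time. For $a\in\g_0$ it takes a single eigenvector $v\in\g_\lambda$ of $\operatorname{ad}_a$ with eigenvalue $\nu$; this needs no diagonalizability, only that $\operatorname{ad}_a$ preserves $\g_\lambda$. Then $b:=a-(\nu/\lambda)x$ commutes with both $x$ and $v$, which do not commute with each other, so $b\in\mathcal{Z}(\mathcal{U})$.

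This removes exactly the obstacle you hit. You tried to apply the acc to $u$ and to $v$, but the element to apply it to is $w-(\nu/d_j)x$.

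The paper's route is shorter and uses one eigenvector in one block. It also shows in passing that the central part of $\g$ lies in $\mathcal{Z}(\mathcal{U})=\C[C]$. Your route needs more machinery and an extra rigidity step. In return, once $\g_0$ and $\mathcal{D}\g$ are known to be abelian, it is pure linear algebra inside $\g$, using only the two spectral lemmas applied to the pencil $x+tw$.

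If you keep your route, make two fixes:
\begin{itemize}
\item Make the ``hence'' explicit. The map $t\mapsto(d_j+t\nu)/(d_k+t\nu')$ is continuous on a small real interval around $0$ and takes values in the countable set $\Q$. It is therefore constant, equal to $d_j/d_k$, which forces $\nu d_k=\nu' d_j$.
\item Delete the exploratory paragraphs. In particular, the claim that $x\in C_{\mathcal{U}}(u)$ is false, since $[x,u]=d_ju\neq0$.
\end{itemize}
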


\begin{proof}
    Let $\g\subseteq\mathcal{U}$ be a finite-dimensional non-nilpotent solvable Lie subalgebra of $\mathcal{U}$. Then, there exists an $x\in\g$ such that $\operatorname{ad}_x:\g\to\g$ is not nilpotent; otherwise $\g$ would be nilpotent by Engel's theorem~\cite{Humphreys:1972}. Lemmata~\ref{lem:adx:diagonalizable} and~\ref{lem:adx:positive:rational:ratios} now imply that $\operatorname{ad}_x$ is diagonalizable and the ratio of all non-zero eigenvalues is a positive rational number. For convenience, let us denote the set of eigenvalues by $\mathcal{S}_x:=\operatorname{Spec}(\operatorname{ad}_x)$ and the corresponding set of non-zero eigenvalues by $\mathcal{S}_x^*:=\mathcal{S}_x\setminus\{0\}$. Note that $0\in\mathcal{S}_x$ and $\mathcal{S}_x^*\neq\emptyset$ as explained in the proof of Lemma~\ref{lem:adx:positive:rational:ratios}. Moreover, let us write $\lambda_1,\ldots,\lambda_m$ for all distinct eigenvalues in $\mathcal{S}_x^*$.
    
    Since $\lambda_j/\lambda_k\in \Q_{>0}$ for all $j,k\in\{1,\ldots,m\}$, it holds that $\lambda_j/\lambda_1=q_j/p_j$ for $q_j,p_j\in\N_{>0}$. Thus,
    \begin{align*}
        \frac{\lambda_\ell}{\lambda_1}\prod_{j=1}^m p_j =q_\ell\prod_{\substack{j=1\\j\neq \ell}}^m p_j\in\N_{>0}\qquad\text{for all }\ell\in\{1,\ldots,m\}.
    \end{align*}
    Set $\kappa:=\lambda_1^{-1}\prod_{j=1}^m p_j\in\C\setminus\{0\}$. Then $\operatorname{ad}_{\kappa x}$ has the eigenvalues $0$ and $\kappa\lambda_j\in\N_{>0}$ for all $j\in\{1,\ldots,m\}$. Let us define $\Tilde{x}:=\kappa x/g$, where $g:=\operatorname{gcd}\{\mu\in \mathcal{S}^*_{\kappa x}\}$. Thus $\operatorname{ad}_{\Tilde{x}}$ has $m\geq 1$ distinct non-zero eigenvalues $\kappa\lambda_1/g,\ldots ,\kappa\lambda_m/g\in \N_{>0}$ such that
    \begin{align*}
        \operatorname{gcd}\{\mu\in \mathcal{S}^*_{\Tilde{x}}\}=\operatorname{gcd}\left\{\frac{\kappa\lambda_1}{g},\ldots,\frac{\kappa\lambda_m}{g}\right\}=1.
    \end{align*}
    In what follows, we denote the normalized non-zero eigenvalues of $\operatorname{ad}_{\Tilde{x}}$ by $d_1<d_2<\ldots<d_m$.
    
    \vspace{0.2cm}
    
    As a vector space, we may now decompose $\g$ as
    \begin{align*}
        \g=\bigoplus_{\lambda\in \mathcal{S}_{\Tilde{x}}}\g_\lambda,\qquad\text{where}\qquad\g_\lambda:=\{u\in\g:\operatorname{ad}_{\Tilde{x}}(u)=\lambda u\}.
    \end{align*}
    Since $m\geq 1$, it must hold that
    \begin{align*}
        \g_{>0}:=\bigoplus_{\lambda\in\mathcal{S}_{\Tilde{x}}^*}\g_\lambda\neq\{0\}.
    \end{align*}
    Moreover, because $\Tilde{x}\notin \mathcal{Z}(\g)$ and consequently $\Tilde{x}\notin\mathcal{Z}(\mathcal{U})$, the acc of $\mathcal{U}$, recalled in Lemma~\ref{lem:structural:lietarture:results}, implies that $C_\mathcal{U}(\Tilde{x})$ is abelian. Thus $\g_0=C_\g(\Tilde{x})\subseteq C_\mathcal{U}(\Tilde{x})$ is abelian. Let now $a\in \g_0$, and let $y\in \g_\lambda$ for some $\lambda\in \mathcal{S}_{\Tilde{x}}^*$. By the Jacobi identity, it must hold that
    \begin{align*}
        \operatorname{ad}_{\Tilde{x}}([a,y])=[\Tilde{x},[a,y]]=-[a,[y,\Tilde{x}]]-[y,[\Tilde{x},a]]=\lambda [a,y].
    \end{align*}
    Thus $[a,y]$ either vanishes or is an eigenvector of $\operatorname{ad}_{\Tilde{x}}$ to the eigenvalue $\lambda$, and in both cases $[a,y]\in \g_\lambda$.  Furthermore, $\operatorname{ad}_{\Tilde{x}}$ and $\operatorname{ad}_a$ commute, since, for every $z\in \g$, it holds that
    \begin{align*}
        \operatorname{ad}_{\Tilde{x}}\circ\operatorname{ad}_a(z)=[\Tilde{x},[a,z]]=-[a,[z,\Tilde{x}]]-[z,[\Tilde{x},a]]=[a,[\Tilde{x},z]]=\operatorname{ad}_a\circ\operatorname{ad}_{\Tilde{x}}(z).
    \end{align*}
    Fix one non-zero eigenvalue $\lambda\in\mathcal{S}_{\Tilde{x}}^*$. For every $a\in\g_0$, the endomorphism $\operatorname{ad}_a|_{\g_\lambda}$ has an eigenvector in $\g_\lambda$ because $\g_\lambda$ is a non-zero finite-dimensional complex vector space~\cite{Axler:2023}. Hence, there exists some $w\in\g_\lambda\setminus\{0\}$ and $\nu(a)\in\C$ such that $[a,w]=\nu(a) w$. This can be employed to define the element $b:=a-\nu(a)\Tilde{x}/\lambda\in\g_0$. Since $\g_0$ is abelian, $[\Tilde{x},b]=0$. Moreover, by construction, it holds that $[\Tilde{x},w]=\lambda w\neq 0$ and
    \begin{align*}
        [b,w]=[a,w]-\frac{\nu(a)}{\lambda}[\Tilde{x},w]=\nu(a)w-\nu(a) w=0.
    \end{align*}
    This shows that $\Tilde{x},w\in C_{\mathcal{U}}(b)$. If $b$ were non-central in $\mathcal{U}$, the acc of $\mathcal{U}$ would force $\Tilde{x}$ and $w$ to commute, which is false. Thus $b\in \mathcal{Z}(\mathcal{U})$, which implies furthermore that $b\in \mathcal{Z}(\g)$, since $b\in\mathcal{Z}(\mathcal{U})\cap \g$.  Every $a\in \g_0$ can therefore be written as $a=b+\nu(a)\Tilde{x}/\lambda$ with $b\in \mathcal{Z}(\g)$ and $\nu(a)/\lambda\in \C$. It follows that $\g_0=\spn\{\Tilde{x}\}\oplus\mathcal{Z}(\g)$. Indeed, let $z\in\mathcal{Z}(\g)$. Then we may write $z= z_0+\sum_{\lambda\in\mathcal{S}_{\Tilde{x}}^*} z_\lambda$, where $z_0\in \g_0$ and $z_\lambda\in \g_\lambda$ for each $\lambda\in \mathcal{S}_{\Tilde{x}}^*$. Since $z$ is assumed to be central and $\g_0$ is abelian, it follows that $0=[\Tilde{x},z]=\sum_{\lambda\in\mathcal{S}_{\Tilde{x}}^*} \lambda z_\lambda$. Since the non-zero $z_\lambda$, with $\lambda\in \mathcal{S}_{\Tilde{x}}^*$, are linearly independent as eigenvectors of $\operatorname{ad}_{\Tilde{x}}$ with distinct non-zero eigenvalues, it must hold that $z_\lambda=0$ for each $\lambda\in \mathcal{S}_{\Tilde{x}}^*$. Thus, $\mathcal{Z}(\g)\subseteq\g_0$. Moreover, also by construction, $\spn\{\Tilde{x}\}\cap\mathcal{Z}(\g)=\{0\}$, since $|\mathcal{S}_{\Tilde{x}}^*|\geq 1$.
    
    It holds that $\operatorname{ad}_{\Tilde{x}}(\g_\lambda)=\g_\lambda$ for all $\lambda\in \mathcal{S}_{\Tilde{x}}^*$ and $\operatorname{ad}_{\Tilde{x}}(\g_0)=\{0\}$. Thus, $\g_{>0}\subseteq\mathcal{D}\g$. Since $\mathcal{D}\g$ is abelian by Corollary~\ref{cor:derived:solvable:algebra:abelian}, it follows that $[\g_{>0},\g_{>0}]=\{0\}$. Consequently, one has
    \begin{align*}
        \mathcal{D}\g=[\spn\{\Tilde{x}\}\oplus\mathcal{Z}(\g)\oplus\g_{>0},\spn\{\Tilde{x}\}\oplus\mathcal{Z}(\g)\oplus\g_{>0}]=[\spn\{\Tilde{x}\},\g_{>0}]=\g_{>0}.
    \end{align*}
    For $k\in\{1,\ldots,m\}$, set $m_k:=\dim(\g_{d_k})\in\N_{>0}$ and choose a basis $\{b_{k1},\ldots,b_{km_k}\}$ of each $\g_{d_k}$. Furthermore, set $d_0:=\dim(\mathcal{Z}(\g))\in\Np{}$. This shows that
    \begin{align*}
        \g=\left(\spn\{\Tilde{x}\}\ltimes\left(\bigoplus_{k=1}^m\spn\{b_{kj}\,\mid\, j\in\{1,\ldots,m_k\}\}\right)\right) \oplus\mathcal{Z}(\g),
    \end{align*}
    where $[\Tilde{x},b_{kj}]=d_{k} b_{kj}$, and $[b_{kj},b_{k'j'}]=0$. That is,
    \begin{align*}
        \g\cong\mathfrak{r}(\boldsymbol{d},\boldsymbol{m})\oplus\C^{d_0}\qquad\text{with}\qquad\boldsymbol{d}=(d_1,\ldots,d_m)^\mathrm{Tp}\in\N_{>0}^m,\;\boldsymbol{m}=(m_1,\ldots,m_m)\in\N_{>0}^m,
    \end{align*}
    and $d_j=d_k$ if and only if $j=k$, as well as $\operatorname{gcd}(d_1,\ldots,d_m)=1$.
\end{proof}

\subsection{Realizations}

In the final part of the proof of Theorem~\ref{thm:classification:Usl2C}, we want to show that every Lie algebra listed there can be faithfully realized within $\mathcal{U}$ as a Lie subalgebra.

\begin{proposition}
    Any finite-dimensional abelian Lie algebra can be faithfully realized as a Lie subalgebra of $\mathcal{U}$.
\end{proposition}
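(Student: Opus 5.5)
To realize $\C^d$ for any $d\in\N_{>0}$, I would choose $d$ linearly independent, pairwise commuting elements of $\mathcal{U}$. Their span is then a $d$-dimensional subspace on which the commutator bracket vanishes identically, so it is closed under the bracket and is isomorphic to $\C^d$. The faithfulness of the realization is simply the linear independence, since the inclusion map is then an injective Lie-algebra homomorphism.

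The natural choice is a commutative subalgebra generated by one element. I would take the powers $Z,Z^2,\ldots,Z^d$, or equivalently $\unity,Z,\ldots,Z^{d-1}$, or powers of the Casimir element $C$. Powers of a single element commute in any associative algebra, because $[Z^a,Z^b]=Z^{a+b}-Z^{b+a}=0$. Linear independence follows from the PBW theorem: $Z^k$ is itself a normal-ordered PBW monomial of multi-degree $(0,0,k)$, and distinct PBW monomials are linearly independent. So $\{Z,\ldots,Z^d\}$ is linearly independent for every $d$, and
\begin{align*}
    \C^d\cong\spn\{Z,Z^2,\ldots,Z^d\}\subseteq\mathcal{U}.
\end{align*}

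An alternative, if one wants a realization inside $[\mathcal{U},\mathcal{U}]$ or with a nontrivial central part, is to mix powers of $Z$ with powers of $C$. These also commute, and their independence follows from expanding in PBW order as in the proof of Lemma~\ref{lem:polynom:C:x:zero:if:x:nonnilpotent}. This is not needed for the proposition. There is no real obstacle here: the only point requiring justification is linear independence, and the PBW basis gives it directly. The case $d=0$ is covered by convention, since the statement concerns non-zero algebras.
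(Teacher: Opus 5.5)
Your proof is correct and is essentially the paper's argument. The paper spans the powers $X,X^2,\ldots,X^m$ and cites the PBW theorem for their linear independence, which is what you do with powers of $Z$. The choice of generator makes no difference, since any power of a single PBW basis element is itself a normal-ordered PBW monomial.
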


\begin{proof}
    Let $m\in \N_{\geq0}$. The powers $X^1,\ldots,X^m$ commute pairwise and are linearly independent by the PBW theorem. Thus $\C^m\cong \spn\{X^j\,\mid\, j\in\{1,\ldots,m\}\}\subseteq\mathcal{U}$ as a Lie algebra with the trivial Lie bracket. Every finite-dimensional abelian Lie algebra is isomorphic to $\C^m$ for its dimension $m$, where we use the convention that $\C^0=\{0\}=\spn\{\emptyset\}$.
\end{proof}

\begin{proposition}
    The Lie algebra $\sl{2}{\C}$ can be faithfully realized within $\mathcal{U}$.
\end{proposition}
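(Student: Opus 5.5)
The plan is to take the obvious realization: the degree-one part $\spn\{X,Y,Z\}\subseteq\mathcal{U}$, i.e., the image of the canonical map $\iota:\sl{2}{\C}\to\mathcal{U}$ composed from the inclusion $\sl{2}{\C}\hookrightarrow\bigoplus_k\sl{2}{\C}^{\otimes k}$ and the quotient projection. We must then check two things: that $\iota$ is a Lie-algebra homomorphism with respect to the commutator bracket on $\mathcal{U}$, and that it is injective.

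For the homomorphism property, the defining relations of $\mathcal{U}$ state precisely that $X\otimes Y-Y\otimes X-[X,Y]$ lies in the ideal. In $\mathcal{U}$ this reads $XY-YX=Z$, and likewise $YZ-ZY=X$ and $ZX-XZ=Y$. Hence the commutator brackets of $\iota(X),\iota(Y),\iota(Z)$ reproduce the structure constants of $\sl{2}{\C}$ in the basis $\{X,Y,Z\}$. By bilinearity of both brackets, $\iota([u,v])=[\iota(u),\iota(v)]$ holds for all $u,v\in\sl{2}{\C}$. This also shows that $\spn\{X,Y,Z\}$ is closed under the bracket, and is therefore a Lie subalgebra of $\mathcal{U}$.

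For injectivity, which is the only point where anything nontrivial is used, we invoke the PBW theorem recalled in Section~\ref{subsec:universal:enveloping:algebra}. The normal-ordered monomials $X^{k_1}Y^{k_2}Z^{k_3}$ form a basis of $\mathcal{U}$, and in particular the three degree-one monomials $X,Y,Z$ are linearly independent. Thus $\iota$ maps a basis to a linearly independent set and is injective. Consequently $\sl{2}{\C}\cong\spn\{X,Y,Z\}\subseteq\mathcal{U}$ is a faithful realization.

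There is no real obstacle here; the only care needed is to state explicitly that linear independence follows from PBW rather than being automatic from the quotient construction.
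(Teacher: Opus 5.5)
Your proposal is correct and takes the same approach as the paper, which simply observes that $\sl{2}{\C}\cong\spn\{X,Y,Z\}\subseteq\mathcal{U}$ by construction. You additionally make explicit that the bracket relations come from the defining ideal and that linear independence of $X,Y,Z$ comes from the PBW theorem; the paper leaves both points implicit.
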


\begin{proof}
    By construction, it holds that $\sl{2}{\C}\cong\spn\{X,Y,Z\}\subseteq\mathcal{U}$.
\end{proof}

However, it is important to observe that the natural embedding $\iota:\sl{2}{\C}\hookrightarrow\mathcal{U}$ (cf. Ref.~\cite{Onishchik:1988}) is not the only embedding up to precomposition by automorphisms of $\sl{2}{\C}$, as demonstrated in the following example.

\begin{tcolorbox}
    \begin{example}\label{exa:strange:sl2:embedding}
        Define the linear map $\varphi:\sl{2}{\C}\to\mathcal{U}$ by
    \begin{align*}
        \varphi(E)&:=E+\frac{1}{4}(FH+HF)-\frac{1}{4}F^3,\;&\;\varphi(F)&:=F,\;&\;\varphi(H):=H-F^2.
    \end{align*}
    It is straightforward to verify that $[\varphi(H),\varphi(E)]=2\varphi(E)$, $[\varphi(H),\varphi(F)]=-2\varphi(F)$, and $[\varphi(E),\varphi(F)]=\varphi(H)$, and hence $\varphi:\sl{2}{\C}\hookrightarrow\mathcal{U}$, while clearly $\varphi(\sl{2}{\C})\neq\spn\{X,Y,Z\}=\iota(\sl{2}{\C})$.
    \end{example}
\end{tcolorbox}

It is straightforward to extend the construction employed to obtain the embedding presented in Example~\ref{exa:strange:sl2:embedding} to create embeddings that use terms of arbitrarily high degree. 

\begin{proposition}\label{eqn:non:linear:realizations:sl2C}
    Let $m\in\N_{>1}$. Then the linear map $\varphi_m:\sl{2}{\C}\to \mathcal{U}$, defined via
    \begin{align*}
        \varphi_m(E)&:=E+\frac{1}{2m}\sum_{\ell=0}^{m-1}F^\ell H F^{m-1-\ell}-\frac{1}{4}F^{2m-1},\;&\; \varphi_m(F)&:=F,\;&\;\varphi_m(H)&:=H-F^m,
    \end{align*}
    constitutes an embedding of $\sl{2}{\C}$ in $\mathcal{U}$.
\end{proposition}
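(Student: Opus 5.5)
The plan is to verify the three defining relations of $\sl{2}{\C}$ on the images directly, and then deduce injectivity from simplicity. Write $e:=\varphi_m(E)$, $f:=\varphi_m(F)=F$, $h:=\varphi_m(H)=H-F^m$, and $S:=\sum_{\ell=0}^{m-1}F^\ell HF^{m-1-\ell}$, so that $e=E+\frac{1}{2m}S-\frac14F^{2m-1}$.

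The computations rest on the fact that $\operatorname{ad}_u$ is a derivation of the associative algebra $\mathcal{U}$, applied only to words in $E,F,H$. From $[H,F]=-2F$ we get $[H,F^k]=-2kF^k$. Applying $\operatorname{ad}_F$ letter by letter gives $[F^m,E]=-S$ and $[F^m,H]=2mF^m$, hence
\begin{align*}
[F^m,S]=\sum_{\ell}F^\ell[F^m,H]F^{m-1-\ell}=2m^2F^{2m-1}.
\end{align*}
Applying $\operatorname{ad}_H$, which kills the middle $H$ in each word of $S$, gives $[H,S]=-2(m-1)S$. Finally, $[S,F]=\sum_\ell F^\ell[H,F]F^{m-1-\ell}=-2mF^m$.

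With these identities, the three relations follow. First, $[h,f]=[H,F]-[F^m,F]=-2F=-2f$. Second, $[e,f]=H+\frac{1}{2m}(-2mF^m)-0=H-F^m=h$. Third, expanding $[h,e]$ bilinearly gives
\begin{align*}
[h,e]=2E-\tfrac{m-1}{m}S+\tfrac{2m-1}{2}F^{2m-1}+S-mF^{2m-1}=2E+\tfrac1mS-\tfrac12F^{2m-1}=2e,
\end{align*}
so $\varphi_m$ is a Lie-algebra homomorphism. The main obstacle is this last bracket, where the coefficients $\frac{1}{2m}$ and $-\frac14$ are exactly what make the $S$ and $F^{2m-1}$ terms recombine into $2e$. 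I would do that bookkeeping carefully, tracking the coefficient of each of $E$, $S$ and $F^{2m-1}$ separately.

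For injectivity, $\ker\varphi_m$ is an ideal of the simple Lie algebra $\sl{2}{\C}$, and it is proper because $\varphi_m(F)=F\neq0$. Hence $\ker\varphi_m=\{0\}$ and $\varphi_m$ is an embedding. Alternatively, PBW linear independence of $E$, $H$, $F$ and the higher-degree monomials shows directly that the three images are linearly independent.
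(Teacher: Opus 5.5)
Your proposal is correct and follows the paper's proof. The paper also verifies the three brackets directly, using the derivation identities $[F^m,E]=-S$, $[F^m,H]=2mF^m$, $[H,S]=-2(m-1)S$, $[F^m,S]=2m^2F^{2m-1}$ and $[S,F]=-2mF^m$, with the same coefficient bookkeeping in $[h,e]$. The only difference is in the injectivity step: you deduce it from the simplicity of $\sl{2}{\C}$ (the kernel is a proper ideal since $\varphi_m(F)=F\neq0$), which is slightly cleaner than the paper's appeal to degree inspection, an argument you also give as your alternative.
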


\begin{proof}
    One computes
    \begin{align*}
        [\varphi_m(H),\varphi_m(E)]&=[H,E]+\frac{1}{2m}\sum_{\ell=0}^{m-1}\left([H,F^\ell]H F^{m-1-\ell}+F^\ell H [H,F^{m-1-\ell}]\right)-\frac{1}{4}[H,F^{2m-1}]\\
        &\quad-[F^m,E]-\frac{1}{2m}\sum_{\ell=0}^{m-1}F^\ell [F^m,H]F^{m-1-\ell}+\frac{1}{4}[F^m,F^{2m-1}]\\
        &=2E+\frac{1}{2m}\sum_{\ell=0}^{m-1}(-2\ell-2(m-1-\ell))F^\ell H F^{m-1-\ell}+\frac{2m-1}{2}F^{2m-1}+\sum_{\ell=0}^{m-1}F^\ell H F^{m-1-\ell}-m F^{2m-1}\\
        &=2\left(E+\frac{1}{2m}\sum_{\ell=0}^{m-1}F^\ell H F^{m-1-\ell}-\frac{1}{4}F^{2m-1}\right)=2\varphi_m(E),\\
        [\varphi_m(H),\varphi_m(F)]&=[H,F]-[F^m,F]=-2F=-2\varphi_m(F),\\
        [\varphi_m(E),\varphi_m(F)]&=[E,F]+\frac{1}{2m}\sum_{\ell=0}^{m-1}F^\ell[H,F]F^{m-1-\ell}=H-F^m=\varphi_m(H),
    \end{align*}
    which shows that $\varphi_m$ is a Lie-algebra homomorphism. Its injectivity follows directly from a close inspection of the degrees of $\varphi_m(H)$, $\varphi_m(E)$, and $\varphi_m(F)$.
\end{proof}

\begin{corollary}
    The Lie algebra $\sl{2}{\C}\oplus\C^{d_0}$ can be faithfully realized within $\mathcal{U}$ for all $d_0\in\N_{>0}$.
\end{corollary}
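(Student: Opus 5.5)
The natural realization is $\spn\{X,Y,Z\}\oplus\spn\{C,C^2,\ldots,C^{d_0}\}\subseteq\mathcal{U}$, where $C=X^2+Y^2+Z^2$ is the quadratic Casimir element. I will show in turn that this is a Lie subalgebra, that its bracket is the direct-sum bracket, and that the two summands together are linearly independent.

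\emph{Closure and bracket structure.} By construction $\spn\{X,Y,Z\}$ is closed under the commutator and realizes $\sl{2}{\C}$ faithfully, as in the preceding proposition. Each power $C^j$ lies in $\mathcal{Z}(\mathcal{U})=\C[C]$. Hence $[C^j,u]=0$ for every $u\in\mathcal{U}$, so in particular $[C^j,X]=[C^j,Y]=[C^j,Z]=0$ and $[C^j,C^k]=0$. Consequently the span $\g:=\spn\{X,Y,Z,C,\ldots,C^{d_0}\}$ is closed under the bracket. On $\g$ the bracket coincides with that of $\sl{2}{\C}\oplus\C^{d_0}$ once we identify $\spn\{C^j\}$ with an abelian ideal that commutes with the $\sl{2}{\C}$-part.

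\emph{Linear independence.} This is the only point needing care, since it is what makes the realization faithful of the right dimension $3+d_0$. Suppose
$\alpha X+\beta Y+\gamma Z+\sum_{j=1}^{d_0}\theta_jC^j=0$.
\begin{itemize}
\item Applying Lemma~\ref{lem:polynom:C:x:zero:if:x:nonnilpotent} is unnecessary here. A direct PBW argument suffices: as in the final step of that lemma's proof, the top-degree part of $\sum_j\theta_jC^j$ contains the monomial $Z^{2J}$ with coefficient $\theta_J$, where $J$ is the largest index with $\theta_J\neq0$.
\item This monomial has degree $2J\geq2$, so it cannot cancel against the degree-one terms $\alpha X+\beta Y+\gamma Z$ or against lower powers of $C$. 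Linear independence of PBW monomials therefore forces $\theta_J=0$, a contradiction. Hence all $\theta_j=0$.
\item The relation then reduces to $\alpha X+\beta Y+\gamma Z=0$, and PBW gives $\alpha=\beta=\gamma=0$.
\end{itemize}

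\emph{Conclusion.} The map sending $\sl{2}{\C}$ to $\spn\{X,Y,Z\}$ via the identification of Section~\ref{subsec:universal:enveloping:algebra}, and the standard basis of $\C^{d_0}$ to $C,\ldots,C^{d_0}$, is therefore an injective Lie-algebra homomorphism $\sl{2}{\C}\oplus\C^{d_0}\hookrightarrow\mathcal{U}$ for every $d_0\in\N_{>0}$. No step presents a real obstacle; the independence check is the only part that is not immediate, and it is handled by the same top-degree $Z^{2J}$ argument used in Lemma~\ref{lem:polynom:C:x:zero:if:x:nonnilpotent}.
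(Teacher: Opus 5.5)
Your proposal is correct and follows the paper's proof. It uses the same realization $\spn\{X,Y,Z\}\oplus\spn\{C,\ldots,C^{d_0}\}$, obtains centrality of the second summand from $\mathcal{Z}(\mathcal{U})=\C[C]$, and proves linear independence by a PBW degree comparison. Your top-degree $Z^{2J}$ argument handles the full relation in one step, and it is a slightly more careful version of the paper's remark that the $C^k$ have distinct degrees $2k\geq 2$ while $X,Y,Z$ have degree one.
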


\begin{proof}
    Consider the space $\spn\{X,Y,Z\}\oplus\spn\{C^k\,\,\mid\, k\in\{1,\ldots,d_0\}\}\subseteq\mathcal{U}$. The second summand is central as it lies within $\mathcal{Z}(\mathcal{U})=\C[C]$, while the first summand realizes the special linear Lie algebra $\sl{2}{\C}$. The set $\{C^k\,\,\mid\, k\in\{1,\ldots,d_0\}\}$ is linearly independent, as the elements have strictly increasing degrees $2,4,\ldots,2d_0$. Furthermore the terms in the first and in the second summand are linearly independent, as the former contains the three degree-one-basis elements generating $\mathcal{U}$, while the latter terms contain only elements of even degree. The sum is therefore direct and satisfies the required bracket structure.
\end{proof}

\begin{proposition}
    Let $m\in\N_{>0}$, $\boldsymbol{d},\boldsymbol{m}\in \N_{>0}^m$ and $d_0\in\Np{}$ such that the components $d_j$ of $\boldsymbol{d}$ are distinct, correctly ordered and have greatest common divisor one. Then, the Lie algebra $\mathfrak{r}(\boldsymbol{d},\boldsymbol{m})\oplus\C^{d_0}$ can be faithfully realized within $\mathcal{U}$.
\end{proposition}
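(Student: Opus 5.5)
We need an explicit realization of $\mathfrak{r}(\boldsymbol{d},\boldsymbol{m})\oplus\C^{d_0}$ inside $\mathcal{U}$. We work in the standard basis $\{H,E,F\}$ and take $x:=\tfrac12 H$. Then $[x,E^k]=kE^k$, and $\operatorname{ad}_x$ commutes with multiplication by the Casimir element $C$, which is central. Moreover, every element of the form $C^{a}E^{k}$ with $k\geq1$ lies in the commutative subalgebra $\C[C,E]$. Hence all such elements commute pairwise and are eigenvectors of $\operatorname{ad}_x$ with eigenvalue $k$.

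The candidate realization is
\begin{align*}
    \tilde{x}:=\tfrac12 H,\qquad y_{jk}:=C^{k-1}E^{d_j}\quad(j\in\{1,\ldots,m\},\ k\in\{1,\ldots,m_j\}),\qquad z_\ell:=C^{\ell}\quad(\ell\in\{1,\ldots,d_0\}).
\end{align*}
The bracket relations are then immediate:
\begin{itemize}
    \item $[\tilde{x},y_{jk}]=C^{k-1}[\tfrac12 H,E^{d_j}]=d_j y_{jk}$;
    \item $[y_{jk},y_{\ell k'}]=0$, since both lie in $\C[C,E]$, which is commutative because $C$ is central;
    \item the $z_\ell$ are central in $\mathcal{U}$ and hence commute with everything.
\end{itemize}
Thus the linear map sending the abstract basis of $\mathfrak{r}(\boldsymbol{d},\boldsymbol{m})\oplus\C^{d_0}$ to these elements is a Lie-algebra homomorphism. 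The ordering and gcd conditions on $\boldsymbol{d}$ play no role here; they are only a normalization. We only need the $d_j$ to be positive and pairwise distinct.

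The main step is injectivity, i.e.\ linear independence of $\{\tilde{x}\}\cup\{y_{jk}\}\cup\{z_\ell\}$. We split by $\operatorname{ad}_{\tilde{x}}$-eigenvalue, since eigenvectors to distinct eigenvalues are linearly independent:
\begin{itemize}
    \item The elements $\tilde{x}$ and $z_\ell$ have eigenvalue $0$.
    \item The elements $y_{jk}$ for fixed $j$ have eigenvalue $d_j>0$.
\end{itemize}
So a vanishing linear combination splits into vanishing combinations within each eigenspace.

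Within eigenvalue $d_j$, a relation $\sum_k\alpha_k C^{k-1}E^{d_j}=p(C)E^{d_j}=0$ forces $p(C)=0$, because $\mathcal{U}$ is a domain (Lemma~\ref{lem:structural:lietarture:results}). Then $p=0$ by Lemma~\ref{lem:polynom:C:x:zero:if:x:nonnilpotent}, applied with $x=\tfrac12 H$, which has the eigenvector $E$ to eigenvalue $1$; alternatively, one can use the leading-PBW-monomial argument given at the end of that lemma's proof.

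Within eigenvalue $0$, a relation $\beta\tilde{x}+q(C)=0$ with $q$ having no constant term gives the following. The element $q(C)$ is central, while $\tilde{x}=\tfrac12 H$ is in $[\mathcal{U},\mathcal{U}]$. By the splitting $[\mathcal{U},\mathcal{U}]\cap\mathcal{Z}(\mathcal{U})=\{0\}$ from Lemma~\ref{lem:structural:lietarture:results}, both $\beta\tilde{x}$ and $q(C)$ vanish. Then $\beta=0$ since $H\neq0$, and $q=0$ by the same polynomial-independence argument.

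Hence the map is an embedding. Its image is a Lie subalgebra of $\mathcal{U}$ isomorphic to $\mathfrak{r}(\boldsymbol{d},\boldsymbol{m})\oplus\C^{d_0}$, which completes the realization part of Theorem~\ref{thm:classification:Usl2C}.
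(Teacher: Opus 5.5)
Your proposal is correct and takes essentially the same route as the paper. The paper uses $-\mathrm{i}X$ and powers of its eigenvector $Z+\mathrm{i}Y$, which are related to your $\tfrac12 H$ and $E$ by an automorphism of $\mathfrak{sl}(2,\mathbb{C})$; it multiplies these powers by powers of $C$, takes $C^1,\ldots,C^{d_0}$ for the center, and proves independence by splitting into $\operatorname{ad}$-eigenspaces, the only difference being that within each eigenspace it compares PBW degrees ($1$ versus $2k$, and $d_j+2\ell$) instead of using the domain property, Lemma~\ref{lem:polynom:C:x:zero:if:x:nonnilpotent}, and the center--commutator splitting.
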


\begin{proof}
    It holds that $[-\mathrm{i}X,Z+\mathrm{i}Y]=Z+\mathrm{i}Y$, and consequently $[-\mathrm{i}X,(Z+\mathrm{i}Y)^k]=k(Z+\mathrm{i}Y)^k$. Thus, since $C$ is central, it holds that
    \begin{align*}
        [-\mathrm{i}X,(Z+\mathrm{i}Y)^kC^\ell]=k(Z+\mathrm{i}Y)^k C^\ell,\qquad\text{while}\qquad[(Z+\mathrm{i}Y)^kC^\ell,(Z+\mathrm{i}Y)^{k'}C^{\ell'}]=0\text{ for all }k,\ell,k',\ell'\in\Np{}.
    \end{align*}
    This shows that the required bracket structure holds on
    \begin{align*}
        \mathcal{B}:=\left\{-\mathrm{i}X\right\}\cup\left\{(Z+\mathrm{i}Y)^{d_j}C^{\ell_j}\,\mid\, j\in\{1,\ldots,m\},\,\ell_j\in\{1,\ldots,m_j\}\right\}\cup\left\{C^{k}\,\mid\,  k\in\{1,\ldots,d_0\}\right\}.
    \end{align*}
    It remains to show that $\mathcal{B}$ is linearly independent. The elements in $\mathcal{B}\setminus\{-\mathrm{i}X\}$ are all eigenvectors of $\operatorname{ad}_{-\mathrm{i}X}$ with eigenvalues $0,d_1,\ldots,d_m$. Eigenvectors belonging to distinct eigenvalues are linearly independent. Thus, linear dependence can only occur for the sets $\{-\mathrm{i}X\}\cup\{C^k\}_{k=1}^{d_0}$ and $\{ (Z+\mathrm{i}Y)^{d_j} C^{\ell_j}\}_{\ell_j=1}^{m_j}$, with $j\in\{1,\ldots,m\}$. The zero-eigenvalue group consists of $-\mathrm{i}X$ and the powers $C^k$, which are linearly independent because their degrees 1 and $2k$, with $k\in\{1,\ldots,d_0\}$, are distinct. Finally, suppose
    \begin{align*}
        \sum_{\ell_j=1}^{m_j} \alpha_{\ell_j} (Z+\mathrm{i}Y)^{d_j} C^{\ell_j}=0.
    \end{align*}
    Since, for all $j\in\{1,\ldots,m\}$,
    \begin{align*}
        \deg\left(\sum_{\ell_j=1}^{m_j-1} \alpha_{\ell_j} (Z+\mathrm{i}Y)^{d_j} C^{\ell_j}\right)\leq d_j+2(m_j-1)<d_j+2m_j=\deg\left((Z+\mathrm{i}Y)^{d_j} C^{m_j}\right),
    \end{align*}
    which can be seen by expanding the terms in PBW normal order, it must hold that $\alpha_{m_j}=0$. We can now iteratively repeat this process for the next highest degree term and can therefore conclude that $\alpha_{\ell_j}=0$ for all $\ell_j$. Thus, each set $\{ (Z+\mathrm{i}Y)^{d_j} C^{\ell_j}\}_{\ell_j=1}^{m_j}$ is linearly independent. This completes the proof, as it shows that $\spn\{\mathcal{B}\}\cong \mathfrak{r}(\boldsymbol{d},\boldsymbol{m})\oplus\C^{d_0}$.
\end{proof}

\subsection{Classification for the skew-hermitian case}\label{sec:classification:of:skew:hermitian:case}

Let us equip $\mathcal{U}$ with the conjugate-linear anti-involution $(\cdot)^\dagger:\mathcal{U}\to\mathcal{U}$ defined by $X^\dagger=-X$, $Y^\dagger=-Y$, $Z^\dagger=-Z$, $\unity^\dagger=\unity$, $(\mathrm{i}\unity)^\dagger=-\mathrm{i}\unity$, extended conjugate-linearly and by the rule $(AB)^\dagger=B^\dagger A^\dagger$. We want to denote this operation as a \enquote{hermitian conjugation}. This allows us to define a real \enquote{skew-hermitian} Lie subalgebra of $\mathcal{U}$:
\begin{align*}
    \hat{\mathcal{U}}:=\left\{A\in \mathcal{U}\,\mid\, A^\dagger =-A\right\}.
\end{align*}

Here, we find the following classification result.

\begin{thmbox}[boxsep=0pt,left=1em,right=1em]{}
    \begin{corollary}[Classification result]\label{thm:classification:hat:Usl2C}
    Let $\g$ be a real non-zero finite-dimensional Lie subalgebra of $\hat{\mathcal{U}}$. Then, $\g$ is isomorphic to exactly one of the following Lie algebras:
    \begin{itemize}
        \item If solvable: $\g\cong \R^d$ for some $d\in\N_{>0}$.
        \item If semisimple: $\g\cong\su_\R(2)$.
        \item If non-semisimple and non-solvable: $\g\cong\su_\R(2)\oplus\R^d$ for some $d\in\N_{>0}$.
    \end{itemize}
    \end{corollary}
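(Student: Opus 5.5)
The plan is to complexify and then apply Theorem~\ref{thm:classification:Usl2C}.

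\textbf{Complexification.} Let $\g\subseteq\hat{\mathcal{U}}$ be a real finite-dimensional Lie subalgebra. Since $(\cdot)^\dagger$ is conjugate-linear, $\hat{\mathcal{U}}\cap\mathrm{i}\hat{\mathcal{U}}=\{0\}$, and $\mathcal{U}=\hat{\mathcal{U}}\oplus\mathrm{i}\hat{\mathcal{U}}$ via $A=\tfrac12(A-A^\dagger)+\tfrac12(A+A^\dagger)$. Hence $\g_\C:=\g+\mathrm{i}\g\subseteq\mathcal{U}$ is a complex Lie subalgebra with $\g_\C\cong\g\otimes_\R\C$. Solvability, nilpotency, semisimplicity and the dimension of the center are all preserved under complexification.

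\textbf{Solvable case.} Suppose first that $\g$ is solvable. I claim it is abelian. If not, $\g_\C$ would be solvable and non-nilpotent, by Lemma~\ref{lem:nilpotent:U:subalgebras} applied to $\g_\C$. Then there is $x\in\g$ with $\operatorname{ad}_x$ not nilpotent: if every $\operatorname{ad}_x$ with $x\in\g$ were nilpotent, then every $\operatorname{ad}$ on the real span would be nilpotent, so $\g$, and hence $\g_\C$, would be nilpotent by Engel's theorem.

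By Lemmata~\ref{lem:adx:diagonalizable} and~\ref{lem:adx:positive:rational:ratios}, applied to $\g_\C$, the map $\operatorname{ad}_x$ is diagonalizable and its non-zero eigenvalues have positive rational ratios. On the other hand, $x^\dagger=-x$. The key observation is that $\operatorname{ad}_x$ is anti-commuting with $\dagger$ up to sign: $(\operatorname{ad}_x y)^\dagger=[y^\dagger,x^\dagger]=[x,y^\dagger]$ up to the sign $-1$ coming from $x^\dagger=-x$. So if $[x,y]=\lambda y$, then
\[
[x,y^\dagger]=-\bar\lambda\,y^\dagger .
\]
Moreover $y^\dagger\in\g_\C$, because $\g_\C$ is $\dagger$-stable. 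Thus $-\bar\lambda$ is also a non-zero eigenvalue, and $\lambda/(-\bar\lambda)$ must lie in $\Q_{>0}$. This forces $\lambda=-\bar\lambda t$ with $t>0$; taking moduli gives $t=1$, so $\lambda\in\mathrm{i}\R$.

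That alone is not a contradiction, since all eigenvalues could be positive multiples of $\mathrm{i}$. However, the spectrum is closed under $\lambda\mapsto-\bar\lambda$, which sends $\mathrm{i}s$ to $\mathrm{i}s$, so I need a sharper argument. The cleaner route is to use the rescaled $\tilde x=\kappa x$: it has positive integer eigenvalues, and a real $\g$ containing $x$ forces $\kappa\in\mathrm{i}\R$. Then take an eigenvector $y$ with $[\tilde x,y]=dy$, and consider $y^\dagger$. Since $\tilde x^\dagger=\bar\kappa x^\dagger=-\bar\kappa x=\kappa x=\tilde x$, we get $[\tilde x,y^\dagger]=-d\,y^\dagger$. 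This negative eigenvalue contradicts Lemma~\ref{lem:adx:positive:rational:ratios}. Hence $\g$ is abelian, so $\g\cong\R^d$.

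\textbf{Non-solvable case.} By Lemma~\ref{lem:semisimple:U:subalgebra:sl2}, $\g_\C\cong\sl{2}{\C}\oplus\C^{d}$. Therefore the Levi factor $\mathfrak{s}$ of $\g$ is a real form of $\sl{2}{\C}$, and the radical of $\g$ is central of dimension $d$, by complexification of the radical. So $\g\cong\mathfrak{s}\oplus\R^d$. The real forms are $\su_\R(2)$ and $\sl{2}{\R}$, and I must exclude $\sl{2}{\R}$. It has $h$ with $[h,e]=2e$, $[h,f]=-2f$, all real, so $\operatorname{ad}_h$ on $\mathfrak{s}\subseteq\hat{\mathcal{U}}$ has real non-zero eigenvalues. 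Applying the $\dagger$ computation above to $[h,e]=2e$ gives $[h,e^\dagger]=-2e^\dagger$. That is consistent, so instead I would exploit $e^\dagger=-e$: the relation $[h,e]=2e$ gives $[h,e]^\dagger=[e^\dagger,h^\dagger]=[e,h]=-2e$, while $[h,e]^\dagger=(2e)^\dagger=-2e$. This is consistent as well, so a different obstruction is needed.

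\textbf{Main obstacle.} Here I would use the solvable case already proved: $\spn_\R\{h,e\}\subseteq\hat{\mathcal{U}}$ is a solvable non-abelian real subalgebra. This contradicts the first part, which shows that every solvable subalgebra of $\hat{\mathcal{U}}$ is abelian. Hence $\mathfrak{s}\cong\su_\R(2)$.

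\textbf{Realizations and uniqueness.} The realizations are $\spn_\R\{X,Y,Z\}$, the elements $\mathrm{i}^{k}$-scaled powers $C^k$ (note $C^\dagger=C$, so $\mathrm{i}C^k\in\hat{\mathcal{U}}$), and $\mathrm{i}X^k$-type elements for the abelian case. Uniqueness follows from the dimensions and the structure of the center. I expect the real-form and sign bookkeeping for $\dagger$ in the solvable case to be the main point to get right.
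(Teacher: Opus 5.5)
\textbf{Gap: a sign error in the solvable case.} Your argument hinges on the identity $[x,y]=\lambda y\Rightarrow[x,y^\dagger]=-\bar\lambda\,y^\dagger$, and this identity is false. For $x^\dagger=-x$ one has $[x,y]^\dagger=[y^\dagger,x^\dagger]=[y^\dagger,-x]=[x,y^\dagger]$, because the two minus signs cancel. Hence $[x,y^\dagger]=\bar\lambda\,y^\dagger$.

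Check with $x=Z$ and $y=X+\mathrm{i}Y$. Then $[Z,y]=-\mathrm{i}y$, while $y^\dagger=-(X-\mathrm{i}Y)$ satisfies $[Z,y^\dagger]=\mathrm{i}y^\dagger$, so its eigenvalue is $\bar\lambda$, not $-\bar\lambda=\lambda$. Your second computation in the non-solvable paragraph is in fact the correct form of the identity. Under your formula, $e^\dagger=-e$ would have forced $[h,e]=-2e$ there, so calling the first computation ``consistent'' was itself inconsistent.

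With the correct sign, the spectrum of $\operatorname{ad}_x$ on $\g_\C$ is closed under complex conjugation. Together with Lemma~\ref{lem:adx:positive:rational:ratios}, this only shows that every non-zero eigenvalue is real. Then $\kappa$ is real, $\tilde x=\kappa x$ stays skew-hermitian, and $[\tilde x,y^\dagger]=d\,y^\dagger$. No negative eigenvalue appears, so there is no contradiction.

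\textbf{Why a new ingredient is needed.} No purely algebraic manipulation of $\dagger$ can close this gap. Your argument uses only three facts: $\dagger$ is a conjugate-linear anti-involution, $\g\subseteq\hat{\mathcal{U}}$, and $\g_\C$ is $\dagger$-stable, plus lemmas about $\mathcal{U}$ that do not depend on the involution. All of this holds verbatim for the involution $H^{\tilde\dagger}=-H$, $E^{\tilde\dagger}=-E$, $F^{\tilde\dagger}=-F$ remarked on after the conjecture in Section~\ref{sec:classification:of:skew:hermitian:case}. Its skew part contains the non-abelian solvable algebra $\spn_\R\{H,E\}$ with $[H,E]=2E$, so your argument cannot be valid as it stands.

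What is missing is positivity, which is specific to the compact involution. The paper supplies it via Lemma~\ref{lem:faithful:skew:hermitian:representation}. The representations $P_n$ give a faithful representation of $\g$ by skew-hermitian matrices. This yields a positive definite inner product $\iF{a}{b}=-\Tr{\Phi(a)\Phi(b)}$ on $\g$ for which every $\operatorname{ad}_x$ is skew-adjoint. Its non-zero eigenvalues are then purely imaginary and occur in pairs $\pm\lambda$. The resulting ratio $-1$ contradicts Lemma~\ref{lem:adx:positive:rational:ratios}. Alternatively, imaginary spectrum combined with your corrected observation that the spectrum is real forces $\operatorname{ad}_x$ to be nilpotent.

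\textbf{Non-solvable case and a minor point.} Once the solvable case is secured this way, your exclusion of $\sl{2}{\R}$ is valid. It uses the non-abelian solvable subalgebra $\spn_\R\{h,e\}$ and is a neat alternative to the paper's Killing-form argument. As written, however, it inherits the gap. Separately, $\mathrm{i}X^k\in\hat{\mathcal{U}}$ holds only for even $k$. For odd $k$ take $X^k$ itself, or use $\mathrm{i}C^k$ as the paper does.
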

\end{thmbox}

\begin{lemma}\label{lem:faithful:skew:hermitian:representation}
    Let $\g$ be a real non-zero finite-dimensional Lie subalgebra of $\hat{\mathcal{U}}$. Then, $\g$ admits a faithful finite-dimensional representation in terms of skew-hermitian matrices.
\end{lemma}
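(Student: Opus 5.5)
The plan is to use the finite-dimensional unitary irreducible representations of $\su_\R(2)$. For each $n\in\Np{}$ let $\pi_n:\sl{2}{\C}\to\operatorname{End}(\C^{n+1})$ be the spin-$n/2$ irreducible representation. We choose an inner product making $\pi_n(X),\pi_n(Y),\pi_n(Z)$ skew-hermitian; this is possible because $\spn_\R\{X,Y,Z\}=\su_\R(2)$ is the compact real form, and averaging over $SU(2)$ gives such an inner product. By the universal property, $\pi_n$ extends to a unital algebra homomorphism $\pi_n:\mathcal{U}\to\operatorname{End}(\C^{n+1})$.

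The first step is to check that $\pi_n(A^\dagger)=\pi_n(A)^*$, the matrix adjoint, for all $A\in\mathcal{U}$. Both $A\mapsto\pi_n(A^\dagger)$ and $A\mapsto\pi_n(A)^*$ are conjugate-linear anti-homomorphisms, and they agree on the generators $\unity,X,Y,Z$ because $X^\dagger=-X$ and $\pi_n(X)^*=-\pi_n(X)$, and similarly for $Y$ and $Z$. Hence they coincide on all of $\mathcal{U}$. In particular, $\pi_n(\hat{\mathcal{U}})$ consists of skew-hermitian matrices. Each $\pi_n$ is also a Lie-algebra homomorphism for the commutator bracket. It follows that for any finite $N$, the direct sum $\rho_N:=\bigoplus_{n=0}^N\pi_n$ restricted to $\g$ is a representation of $\g$ by skew-hermitian matrices of size $\sum_n(n+1)$.

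The remaining task, and the main obstacle, is faithfulness: we must show that some finite $N$ makes $\rho_N|_\g$ injective. We use the classical fact that the family $\{\pi_n\}_{n\in\Np{}}$ separates points of $U(\sl{2}{\C})$, i.e.\ $\bigcap_n\ker\pi_n=\{0\}$. This holds because the intersection of the annihilators of all finite-dimensional representations of a semisimple Lie algebra is zero (Harish-Chandra; see e.g.\ Dixmier). If a self-contained argument is preferred, we would give a direct one. For $u\neq0$ of PBW degree $k$, consider the weight decomposition under $\operatorname{ad}_H$ in the standard basis. On highest weight vectors of $\pi_n$ with $n\gg k$, the leading term $F^aH^bE^c$ acts nontrivially on a suitable weight vector, with a polynomial dependence on $n$ that is not identically zero; hence it cannot vanish for all large $n$.

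Given separation, finite-dimensionality finishes the argument. The subspaces $K_N:=\g\cap\bigcap_{n\le N}\ker\pi_n$ form a decreasing chain of subspaces of the finite-dimensional space $\g$, so the chain stabilizes at some $K_{N_0}$. The stable value is $\g\cap\bigcap_n\ker\pi_n=\{0\}$. Therefore $\rho_{N_0}|_\g$ is a faithful finite-dimensional representation of $\g$ by skew-hermitian matrices, which proves the lemma.
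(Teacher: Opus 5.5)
Your proposal is correct. It uses the same family of modules as the paper: the paper's $\mathcal{P}_n\subset\C[x_1,x_2]$ are isomorphic to your $\pi_n$. What differs is how faithfulness and skew-hermiticity are obtained.

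For faithfulness, the paper does not cite a separation theorem. It realizes $\mathcal{U}$ by first-order differential operators on $\C[x_1,x_2]$. It then shows that the order-$d$ symbol of $P(U)$ is the top PBW component $q_d$ composed with a map whose image is Zariski dense in $\C^3$. Finally it applies $P(U)$ to $x_1^{k_1}x_2^{k_2}$, where $\partial_1^{k_1}\partial_2^{k_2}$ is a term of minimal order with nonzero coefficient. This argument is self-contained and effective. A nonzero $U$ of PBW degree $d$ already acts nontrivially on some $\mathcal{P}_n$ with $n\le d$, so $\bigoplus_{n\le D}P_n$ works with $D$ the maximal degree of a basis of $\g$. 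It also proves directly that $P$ is faithful on all of $\mathcal{U}$, which the paper reuses for the injectivity of the Jordan--Schwinger homomorphism into $A_2$.

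Your route, Harish-Chandra's separation theorem plus stabilization of the chain $K_N$, is shorter and conceptual. It transfers essentially verbatim to other semisimple Lie algebras with a compact form. The cost is that it gives no explicit $N_0$ and places the real content in the citation.

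For skew-hermiticity, the paper uses the explicit Fischer inner product and PBW bookkeeping. Your unitarian trick is cleaner, together with the observation that $A\mapsto\pi_n(A^\dagger)$ and $A\mapsto\pi_n(A)^*$ are conjugate-linear anti-homomorphisms agreeing on $\unity,X,Y,Z$.

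One caution about your optional self-contained sketch: as written it fails, because a highest-weight vector is annihilated by every monomial $F^aH^bE^c$ with $c>0$. A working version proceeds as follows.
\begin{itemize}
\item Fix one nonzero $\operatorname{ad}_H$-weight component of $u$ and let it act on a general basis vector $v_j$ of $\pi_n$.
\item Regard the resulting coefficient as a polynomial in $j$ and $n-j$. Its top homogeneous part is a nontrivial combination of the products $(n-2j)^b\,(j(n-j))^c$.
\item These products are linearly independent, since $n-2j$ and $j(n-j)$ are algebraically independent.
\item Hence the coefficient cannot vanish at all lattice points with $j$ and $n-j$ large.
\end{itemize}
Alternatively, the paper's symbol argument does exactly this job.
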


\begin{proof}
    Let us consider the representation $\rho:\sl{2}{\C}\to \operatorname{End}(\C[x_1,x_2])$ of $\sl{2}{\C}$ given by
    \begin{align*}
        \rho(X)&:=-\frac{\mathrm{i}}{2}\left(x_1\partial_2+x_2\partial_1\right),\;&\;\rho(Y)&:=\frac{1}{2}\left(x_1\partial_2-x_2\partial_1\right),\;&\;\rho(Z)&:=\frac{\mathrm{i}}{2}\left(x_1\partial_1-x_2\partial_2\right),
    \end{align*}
    where $\partial_j:=\partial/\partial x_j$. It is straightforward to confirm that these operators satisfy the required Lie-algebra homomorphism requirements
    \begin{align*}
        [\rho(X),\rho(Y)]&=\rho(Z),\;&\;[\rho(Y),\rho(Z)]&=\rho(X),\;&\;[\rho(Z),\rho(X)]&=\rho(Y).
    \end{align*}
    We can extend $\rho$ uniquely to a representation $P:\mathcal{U}\to \operatorname{End}(\C[x_1,x_2])$ using the universal property of the universal enveloping algebra. We may decompose $\C[x_1,x_2]$ into the spaces of homogeneous polynomials
    \begin{align*}
        \mathcal{P}_n:=\spn\left\{x_1^\ell x_2^{n-\ell}\,\middle\mid\,\ell\in\{0,1,\ldots,n\}\right\}\qquad\text{such that}\qquad\C[x_1,x_2]=\bigoplus_{n=0}^\infty\mathcal{P}_n.
    \end{align*}
    Each of the three operators $\rho(X)$, $\rho(Y)$, and $\rho(Z)$ preserves the degree of a polynomial. Hence, $P(A)\mathcal{P}_n\subseteq\mathcal{P}_n$, for all $A\in \mathcal{U}$. Thus, we may consider the restricted representations $P_n:\mathcal{U}\to \operatorname{End}(\mathcal{P}_n)$, where $U\mapsto P_n(U):=P(U)|_{\mathcal{P}_n}$. These are known as the canonical spin-$n/2$ representations~\cite{Heib:2026:SSRC:Universality,Georgi:2000}.
    
    Let now $U\in \g\setminus\{0\}\subseteq\mathcal{U}$, set $d:=\deg(U)$ and write $\boldsymbol{n}:=(n_1,n_2,n_3)\in\Np{3}$. By the PBW theorem, we can expand $U$ in terms of the ordered monomials in $X,Y,Z$ such that
    \begin{align*}
        U=\sum_{\boldsymbol{n}\in\mathcal{I}_3^{(d)}}\alpha_{\boldsymbol{n}}X^{n_1}Y^{n_2}Z^{n_3}=\sum_{\ell=0}^d\sum_{\boldsymbol{n}\in\Bar{\mathcal{I}}_3^{(\ell)}}\alpha_{\boldsymbol{n}}X^{n_1}Y^{n_2}Z^{n_3}.
    \end{align*}
    Since $\deg(U)=d$, it holds that
    \begin{align*}
        q_d(a,b,c):=\sum_{\boldsymbol{n}\in\Bar{\mathcal{I}}_3^{(d)}}\alpha_{\boldsymbol{n}}a^{n_1}b^{n_2}c^{n_3}\neq 0,
    \end{align*}
    as a polynomial. We now want to show that for some sufficiently large $n\in\Np{}$, it holds that $P_n(U)$ is not the zero operator, i.e., $\ker(P_n(U))\neq\mathcal{P}_n$. First, we write out $P(U)$ in terms of the variables $x_1,x_2$ and the partial derivatives $\partial_1,\partial_2$ using the commutation relations $[x_j,\partial_k]=-\delta_{jk}\operatorname{id}$:
    \begin{align*}
        P(U)&=\sum_{\boldsymbol{n}\in\mathcal{I}_3^{(d)}}\alpha_{\boldsymbol{n}}\left(-\frac{\mathrm{i}}{2}\left(x_1\partial_2+x_2\partial_1\right)\right)^{n_1}\left(\frac{1}{2}\left(x_1\partial_2-x_2\partial_1\right)\right)^{n_2}\left(\frac{\mathrm{i}}{2}\left(x_1\partial_1-x_2\partial_2\right)\right)^{n_3}=\sum_{(k_1,k_2)\in\mathcal{I}_2^{(d)}}p_{k_1k_2}(x_1,x_2)\partial_1^{k_1}\partial_2^{k_2}.
    \end{align*}
    Next, we introduce the commuting variables $\xi_1$ and $\xi_2$, which we substitute for the partial derivatives present in $P(U)$. 
    When commuting a derivative $\partial_j$ through a variable $x_k$, every non-zero correction term has fewer derivative factors, and hence lower derivative order, where the derivative order is the total number of derivative operators in a normal ordered monomial. Thus, the terms of derivative order $d$ satisfy
    \begin{align*}
        \sum_{(k_1,k_2)\in\Bar{\mathcal{I}_2^{(d)}}}p_{k_1k_2}(x_1,x_2)\xi_1^{k_1}\xi_2^{k_2}=q_d\left(-\frac{\mathrm{i}}{2}(x_1\xi_2+x_2\xi_1),\frac{1}{2}(x_1\xi_2-x_2\xi_1),\frac{\mathrm{i}}{2}(x_1\xi_1-x_2\xi_2)\right).
    \end{align*}
    Suppose now that the polynomial on the right-hand-side above vanishes identically and let $\lambda,\mu,\nu\in\C$ be arbitrary with $\mu-\mathrm{i}\lambda\neq 0$. We now seek to find values for the variables $x_1,x_2,\xi_1,\xi_2$ such that the three inputs of the polynomial on the right-hand-side above form the triple $(\lambda,\mu,\nu)$. That is, we want to solve the following set of equations:
    \begin{align*}
        \lambda&:= -\frac{\mathrm{i}}{2}(x_1\xi_2+x_2\xi_1),\;&\;\mu&:=\frac{1}{2}(x_1\xi_2-x_2\xi_1),\;&\;\nu&:=\frac{\mathrm{i}}{2}(x_1\xi_1-x_2\xi_2).
    \end{align*}
    We can now manipulate these expressions to obtain the equivalent relations
    \begin{align*}
        \mu+\mathrm{i}\lambda&=x_1\xi_2,\;&\;-\mu+\mathrm{i}\lambda&=x_2\xi_1,\;&\;-2\mathrm{i}\nu&=x_1\xi_1-x_2\xi_2.
    \end{align*}
    To construct a solution, we may start by choosing $x_2=1$. This allows us to conclude that $\xi_1=-\mu+\mathrm{i}\lambda$ and $\xi_2=2\mathrm{i}\nu-(\mu-\mathrm{i}\lambda) x_1$. Substituting this in the relation for $x_1\xi_2$ yields
    \begin{align*}
        0&=-(\mu-\mathrm{i}\lambda)x_1^2+2\mathrm{i}\nu x_1-(\mu+\mathrm{i}\lambda).
    \end{align*}
    The right-hand-side of the equation above is a polynomial of degree $2$ in $x_1$, since $\mu-\mathrm{i}\lambda\neq 0$. It has therefore a root in $\C$. Choosing $x_1$ accordingly, and the values for $\xi_1$ and $\xi_2$ as demanded by the equations above solves all three equations. Thus, every triple $(\lambda,\mu,\nu)\in\C^{3}$ with $\lambda\neq-\mathrm{i}\mu$ appears in the argument of $q_d$. Thus, one must have, for all such $\lambda,\mu,\nu\in \C$,
    \begin{align*}
        q_d(\lambda,\mu,\nu)=\sum_{\boldsymbol{n}\in\Bar{\mathcal{I}}_3^{(d)}} \alpha_{\boldsymbol{n}}\lambda^{n_1}\mu^{n_2}\nu^{n_3}=\sum_{k=0}^d f_k(\mu,\nu)\lambda^k=0.
    \end{align*}
    For each fixed $\mu,\nu\in\C$, this holds for all $\lambda\neq-\mathrm{i}\mu$. The polynomial above, seen only as a finite polynomial in $\lambda$, has consequently infinitely many zeros, which implies that $f_k(\mu,\nu)=0$, for every $k\in\{0,1,\ldots,d\}$. Since $\mu,\nu$ were chosen arbitrarily, we may repeat this argument in the remaining variables and find that $f_k=0$ as a polynomial. This would imply that $q_d=0$ as a polynomial, contradicting its construction.

    By the argument above $p_{k_1k_2}(x_1,x_2)\neq 0$ as a polynomial for at least one pair $(k_1,k_2)\in{\mathcal{I}}_2^{(d)}$. Choose now a pair $(k_1',k_2')\in{\mathcal{I}}_2^{(d)}$ with $p_{k_1'k_2'}\neq 0$ such that $k_1'+k_2'$ is minimal among all such pairs. It then holds that $\partial_1^{\ell_1}\partial_2^{\ell_2}x_1^{j_1}x_2^{j_2}=0$ whenever $\ell_1> j_1$ or $\ell_2>j_2$. Thus, by minimality, one has
    \begin{align}
        P(U)x_1^{k_1'}x_2^{k_2'}=k_1'!k_2'! p_{k_1'k_2'}(x_1,x_2)\neq0.
    \end{align}
    It holds that $x_1^{k_1'}x_2^{k_2'}\in \mathcal{P}_{k_1'+k_2'}$, where $k_1'+k_2'\leq d$. This shows that ${P}_n(U)$ is not the zero operator for at least one $n\in\{0,1,\ldots,d\}$.

    We may now choose a basis $\mathcal{B}=\{U_1,\ldots,U_m\}$ of $\g$ and set $D:=\max\{\deg(U_j)\,\mid\,j\in\{1,\ldots,m\}\}$. By the observations above, the direct sum of representations
    \begin{align*}
        \Phi:\mathcal{U}\longrightarrow \operatorname{End}\left(\bigoplus_{n=0}^D\mathcal{P}_n\right),\quad U\longmapsto \Phi(U):=\bigoplus_{n=0}^DP_n(U)
    \end{align*}
    satisfies $\ker(\Phi)\cap \g=\{0\}$, since each $U\in \g\setminus\{0\}$ has degree $\deg(U)\leq D$ and there exists some $n\in\{0,1,\ldots,D\}$ such that $P_n(U)$ is not the zero operator. Thus, the restriction $\Phi|_\g$ is a faithful finite-dimensional representation.

    To show that $\g$ admits such a faithful finite-dimensional representation in terms of skew-hermitian matrices, we equip $\C[x_1,x_2]$ with the scalar product defined on the basis elements via $\langle x_1^{\ell_1} x_2^{\ell_2},x_1^{k_1}x_2^{k_2}\rangle=\ell_1!\ell_2!\delta_{\ell_1k_1}\delta_{\ell_2 k_2}$, extended sesquilinearly. This allows us to define, for each $n\in\Np{}$, the orthonormal basis of $\mathcal{P}_n$ via
    \begin{align*}
        \mathcal{B}_n:=\left\{\ket{\ell}\equiv\frac{x_1^\ell x_2^{n-\ell}}{\sqrt{\ell!(n-\ell)!}}\,\middle\mid\, \ell\in\{0,1,\ldots,n\}\right\},
    \end{align*}
    with the convention $\ket{k}=0$ for all $k\in\Z\setminus\{0,1,\ldots,n\}$. This also allows us to identify $\mathcal{P}_n\cong \C^{n+1}$, as vector spaces. Writing $|j\rangle \langle k|$ as the elementary matrix with a one at position $(j,k)$ while all other entries are identically zero, we can express the representations of the basis elements $X,Y,Z$ as
    \begin{align*}
        P_n(X)&=-\frac{\mathrm{i}}{2}\sum_{k=0}^n\left(\sqrt{(k+1)(n-k)}\ket{k+1}\bra{k}+\sqrt{k(n-k+1)}\ket{k-1}\bra{k}\right),\\
        P_n(Y)&=\frac{1}{2}\sum_{k=0}^n\left(\sqrt{(k+1)(n-k)}\ket{k+1}\bra{k}-\sqrt{k(n-k+1)}\ket{k-1}\bra{k}\right),\\
        P_n(Z)&=\mathrm{i}\sum_{k=0}^n\left(k-\frac{n}{2}\right)\ket{k}\bra{k}.
    \end{align*}
    These are clearly skew-hermitian matrices. That is, $(P_n(A))^\dagger=-P_n(A)=P_n(-A)=P_n(A^\dagger)$ for each $A\in\{X,Y,Z\}$. Let now $U\in \hat{\mathcal{U}}$, with the usual decomposition. Then, one has
    \begin{align*}
        U^\dagger&=\left(\sum_{\boldsymbol{n}\in \mathcal{I}_3^{(d)}}\alpha_{\boldsymbol{n}} X^{n_1}Y^{n_2}Z^{n_3}\right)^\dagger=\sum_{\boldsymbol{n}\in \mathcal{I}_3^{(d)}}\alpha_{\boldsymbol{n}}^*(Z^\dagger)^{n_3} (Y^\dagger)^{n_2}(X^\dagger)^{n_1}=\sum_{\boldsymbol{n}\in \mathcal{I}_3^{(d)}}(-1)^{n_1+n_2+n_3}\alpha_{\boldsymbol{n}}^* Z^{n_3}Y^{n_2}X^{n_1}=-U,
    \end{align*}
    by the defining properties for the operation $(\cdot)^\dagger$ on $\mathcal{U}$. Thus, by the definition of $P_n$, one has furthermore
    \begin{align*}
        (P_n(U))^\dagger&= \sum_{\boldsymbol{n}\in\mathcal{I}_3^{(d)}}\alpha_{\boldsymbol{n}}^*\left((P_n(Z))^\dagger\right)^{n_3}\left((P_n(Y))^\dagger\right)^{n_2}\left((P_n(X))^\dagger\right)^{n_1}\\
        &=\sum_{\boldsymbol{n}\in\mathcal{I}_3^{(d)}}(-1)^{n_1+n_2+n_3}\alpha_{\boldsymbol{n}}^*\left(P_n(Z)\right)^{n_3}\left(P_n(Y)\right)^{n_2}\left(P_n(X)\right)^{n_1}\\
        &=P_n\left(\sum_{\boldsymbol{n}\in \mathcal{I}_3^{(d)}}(-1)^{n_1+n_2+n_3}\alpha_{\boldsymbol{n}}^* Z^{n_3}Y^{n_2}X^{n_1}\right)=P_n(-U)=-P_n(U).
    \end{align*}
    Thus, every $\Phi(U)$ is a finite-dimensional skew-hermitian block-diagonal matrix for every $U\in\g$, which completes the proof.
\end{proof}

This allows us now to prove Corollary~\ref{thm:classification:hat:Usl2C}.

\begin{proof}
    First, we need to observe that the real Lie algebra $\hat{\mathcal{U}}$ is constructed such that $\hat{\mathcal{U}}\oplus \mathrm{i}\hat{\mathcal{U}}=\mathcal{U}$ as a direct sum of real vector spaces, because every $A\in \mathcal{U}$ decomposes as the sum of a real skew-hermitian and real hermitian component: $A=(A-A^\dagger)/2+\mathrm{i}(A+A^\dagger)/(2\mathrm{i})$, while the intersection of $\hat{\mathcal{U}}$ and $\mathrm{i}\hat{\mathcal{U}}$ is trivial. We may consequently define the complexification of $\g\subseteq\hat{\mathcal{U}}$ via $\g_\C:=\g\oplus\mathrm{i}\g$, which is a complex Lie subalgebra of $\mathcal{U}$.

    Let us suppose $\g\subseteq\hat{\mathcal{U}}$ is a non-zero finite-dimensional real Lie subalgebra. According to Lemma~\ref{lem:faithful:skew:hermitian:representation}, $\g$ admits a faithful finite-dimensional representation $\Phi$. This allows us to equip $\g$ with an inner product, namely the induced Frobenius inner product, defined by
    \begin{align*}
        \langle\cdot,\cdot\rangle_{\mathrm{IF}}:\g\times\g\longrightarrow \R,\qquad\langle a,b\rangle_\mathrm{IF}:=\Tr{\Phi(a)(\Phi(b))^\dagger}.
    \end{align*}
    Since $\Phi$ is compatible with the skew hermitian conjugation on $\mathcal{U}$, it holds for all $a,b\in\g\subseteq\hat{\mathcal{U}}$ that
    \begin{align*}
        \langle a,b\rangle_{\mathrm{IF}}=\Tr{\Phi(a)(\Phi(b))^\dagger}=\Tr{\Phi(a)\Phi(b^\dagger)}=\Tr{\Phi(a)\Phi(-b)}=-\Tr{\Phi(a)\Phi(b)}.
    \end{align*}
    This binary operation constitutes indeed an inner product, as it inherits its conjugate symmetry and real bilinearity from the Frobenius inner product for complex matrices~\cite{AmirMoez:1960}. Moreover, it is real-valued and  hence symmetric, because
    \begin{align*}
        \langle a,b\rangle_\mathrm{IF}^*&=-\Tr{\Phi(a)\Phi(b)}^*=-\Tr{(\Phi(a)\Phi(b))^\dagger}=-\Tr{(\Phi(b))^\dagger(\Phi(a))^\dagger}=-\Tr{\Phi(b^\dagger)\Phi(a^\dagger)}=-\Tr{\Phi(b)\Phi(a)}\\
        &=-\Tr{\Phi(a)\Phi(b)}=\langle a,b\rangle_\mathrm{IF}.
    \end{align*}
    Moreover, since $\Phi$ is faithful, it also inherits the positive definiteness from the Frobenius inner product, making $\langle\cdot,\cdot\rangle_\mathrm{IF}$ an inner product on $\g$~\cite{Horn:2012,AmirMoez:1960,Axler:2023}.

    We may now observe that, using linearity and cyclicity of the trace and the fact that $\Phi$ is a Lie-algebra homomorphism, one has, for all $x,a,b\in\g$,
    \begin{align*}
        \langle\operatorname{ad}_x(a),b\rangle_\mathrm{IF}&=-\Tr{\Phi([x,a])\Phi(b)}=-\Tr{[\Phi(x),\Phi(a)]\Phi(b)}=-\Tr{\Phi(x)\Phi(a)\Phi(b)}+\Tr{\Phi(a)\Phi(x)\Phi(b)}\\
        &=-\Tr{\Phi(a)\Phi(b)\Phi(x)}+\Tr{\Phi(a)\Phi(x)\Phi(b)}=-\Tr{\Phi(a)[\Phi(b),\Phi(x)]}=-\Tr{\Phi(a)\Phi([b,x])}\\
        &=\Tr{\Phi(a)\Phi([x,b])}=-\langle a,\operatorname{ad}_x(b)\rangle_\mathrm{IF}.
    \end{align*}
    This shows that $\operatorname{ad}_x$ is skew-adjoint with respect to $\langle\cdot,\cdot\rangle_\mathrm{IF}$ for all $x\in\g\subseteq\hat{\mathcal{U}}$.

    Let us now assume that $\g$ is solvable non-nilpotent. Engel's theorem then allows us to choose some $x\in\g$ such that $\operatorname{ad}_x$ is not nilpotent. By the discussion above its corresponding matrix in a real orthonormal basis is skew-hermitian. Its complexification is therefore diagonalizable with purely imaginary eigenvalues~\cite{Meyer:2001}. Since $\operatorname{ad}_x:\g\to\g$ is not nilpotent, $\operatorname{ad}_x:\g_\C\to\g_\C$ is also not nilpotent and has therefore a non-zero eigenvalue $\lambda$~\cite{Fischer:2005}. Furthermore, because $\operatorname{ad}_x$ corresponds to a real matrix, it is also skew-symmetric and possesses therefore also $\lambda^*=-\lambda\neq\lambda$ as a non-zero eigenvalue~\cite{Drazin:1952}. The extended adjoint map $\operatorname{ad}_x:\g_\C\to\g_\C$ therefore has a pair of non-zero eigenvalues such that their ratio is $-1$, which contradicts Lemma~\ref{lem:adx:positive:rational:ratios}, as $\g_\C$ is solvable according to Proposition~77 in \cite{Heib:2025:A1}. Thus no finite-dimensional non-zero solvable non-nilpotent Lie subalgebra of $\hat{\mathcal{U}}$ exists.

    Next, let $\g\subseteq\hat{\mathcal{U}}$ be a non-zero finite-dimensional nilpotent Lie subalgebra. Then $\g_\C$ is also nilpotent, as complexification is compatible with the lower central series~\cite{Heib:2025:A1}, and, due to Lemma~\ref{lem:nilpotent:U:subalgebras}, abelian. Hence, $\g$ is abelian and because all real non-zero finite-dimensional abelian Lie algebras are isomorphic to $\R^n$ for some $n\in\N_{>0}$, isomorphic to such a Lie algebra. Thus, every non-zero finite-dimensional solvable Lie subalgebra of $\hat{\mathcal{U}}$ is abelian and isomorphic to  $\R^n$ for some $n\in\N_{>0}$.

    Finally, let us consider the case that $\g\subseteq\hat{\mathcal{U}}$ is a non-zero non-solvable Lie subalgebra. Here, we first recall that complexification is compatible with the derived series. That is, $\mathcal{D}\g_\C=(\mathcal{D}\g)_\C$. Thus, by Theorem~\ref{thm:classification:Usl2C}, one has $\g_\C=\mathcal{D}\g_\C\oplus\mathcal{Z}(\g_\C)$ with $\mathcal{D}\g_\C\cong\sl{2}{\C}$. Since $\g=\{v\in\g_\C\,\mid\,v^\dagger=-v\}$, one has consequently also $\g=\mathcal{D}\g\oplus\mathcal{Z}(\g)$, since both summands are preserved under conjugation with respect to $\g$. This implies, in particular, that $\g$ is reductive and it remains to identify the correct real form of $\sl{2}{\C}$ that $\mathcal{D}\g$ can be isomorphic to. 

    We may now express the Killing form $K(a,b)$ using the induced Frobenius inner product as follows. First pick an orthonormal basis $\mathcal{B}=\{c_1,c_2,c_3\}$ of $\mathcal{D}\g$ and then set
    \begin{align*}
        K:\mathcal{D}\g\times\mathcal{D}\g\longrightarrow\R,\quad(a,b)\mapsto K(a,b):=\Tr{\operatorname{ad}_a\circ\operatorname{ad}_b}:=\sum_{j=1}^3\iF{[a,[b,c_j]]}{c_j}.
    \end{align*}
    The induced Frobenius inner product is associative, thus
    \begin{align*}
        K(x,x)=\sum_{j=1}^3\iF{[x,[x,c_j]]}{c_j}=-\sum_{j=1}^3\iF{[[x,c_j],x]}{c_j}=-\sum_{j=1}^3\iF{[x,c_j]}{[x,c_j]},
    \end{align*}
    for all $x\in \mathcal{D}\g$. Since $(\mathcal{D}\g)_\C$ is simple, $\mathcal{D}\g$ is also simple and hence $\mathcal{Z}(\mathcal{D}\g)=0$. This implies that $[x,c_j]=0$ for all $j\in\{1,2,3\}$ only if $x=0$. The positive definiteness of the induced Frobenius inner product then implies that the Killing form for $\mathcal{D}\g$ is negative definite, which implies that $\mathcal{D}\g$ is compact~\cite{Knapp:1996}. It follows that $\mathcal{D}\g\cong\mathfrak{su}_\R(2)$~\cite{Bianchi:1903}, which completes the proof, since the reductiveness of $\g$ yields now $\g\cong\mathfrak{su}_\R(2)\oplus\R^n$ for some $n\in\Np{}$.
\end{proof}

\begin{proposition}
    Every finite-dimensional non-zero Lie algebra $\g$ from Corollary~\ref{thm:classification:hat:Usl2C} appears as a Lie subalgebra of $\hat{\mathcal{U}}$.
\end{proposition}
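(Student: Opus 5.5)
We construct explicit realizations inside $\hat{\mathcal{U}}$ for each of the three families in Corollary~\ref{thm:classification:hat:Usl2C}. Throughout we use that $\hat{\mathcal{U}}$ is a real Lie algebra under the commutator. This holds because $[A,B]^\dagger=[B^\dagger,A^\dagger]=[-B,-A]=-[A,B]$ for skew-hermitian $A,B$.

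We first record two elementary facts about the anti-involution. The generators $X,Y,Z$ lie in $\hat{\mathcal{U}}$ by definition. For the Casimir element, $C^\dagger=(X^\dagger)^2+(Y^\dagger)^2+(Z^\dagger)^2=C$, so $C$ is hermitian. Since $C$ is central, $(C^k)^\dagger=C^k$, and therefore $\mathrm{i}C^k\in\hat{\mathcal{U}}$ for every $k\in\N_{>0}$, because $(\mathrm{i}C^k)^\dagger=-\mathrm{i}C^k$.

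The three cases then read as follows.

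\emph{Semisimple case.} The real span $\spn_\R\{X,Y,Z\}\subseteq\hat{\mathcal{U}}$ has brackets $[X,Y]=Z$ and cyclic permutations. It is therefore $\su_\R(2)$ by the remark in Section~\ref{subsec:universal:enveloping:algebra}.

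\emph{Abelian case.} For $d\in\N_{>0}$ take $\spn_\R\{\mathrm{i}C^k\,\mid\,k\in\{1,\ldots,d\}\}$. These elements commute because they are central. They are linearly independent over $\R$, and indeed over $\C$, because $\deg(C^k)=2k$ are distinct; this was already used in the proof of the corollary realizing $\sl{2}{\C}\oplus\C^{d_0}$. The span is therefore isomorphic to $\R^d$. An alternative would be powers $X^k$ with suitable phases $\mathrm{i}^{k+1}X^k$, but the Casimir choice is cleaner.

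\emph{Reductive case.} For $d\in\N_{>0}$ take $\spn_\R\{X,Y,Z\}\oplus\spn_\R\{\mathrm{i}C^k\,\mid\,k\in\{1,\ldots,d\}\}$. The second summand is central, and the first realizes $\su_\R(2)$. Directness of the sum follows exactly as in the complex corollary: the first summand consists of degree-one elements, while the second consists of elements whose nonzero homogeneous PBW components all have even degree. Hence the whole set is linearly independent over $\C$, and a fortiori over $\R$. The resulting Lie algebra is isomorphic to $\su_\R(2)\oplus\R^d$.

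Only two points need care, and both are routine. The first is checking that the dagger of $C$ indeed equals $C$, which uses $(AB)^\dagger=B^\dagger A^\dagger$ applied to squares. The second is the transfer of complex linear independence to real linear independence. Neither is a real obstacle.
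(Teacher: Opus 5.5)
Your realizations are the same as the paper's: $\spn_\R\{X,Y,Z\}$ for $\su_\R(2)$, $\spn_\R\{\mathrm{i}C^k\mid 1\le k\le d\}$ for $\R^d$, and their sum for $\su_\R(2)\oplus\R^d$. The abelian and semisimple cases are fine as written: the elements $\mathrm{i}C^k$ are skew-hermitian because $C^\dagger=C$, and they are independent because $\deg(C^k)=2k$.

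The justification you give for directness of the sum in the reductive case is, however, false. You assert that every nonzero homogeneous PBW component of $C^k$ has even degree. For the ordered monomials $X^aY^bZ^c$ that define $\deg$, this already fails at $k=2$. Collecting the three out-of-order products and using $[X^2,Y^2+Z^2]=[X^2,C-X^2]=0$ gives $C^2=X^4+Y^4+Z^4+2X^2Y^2+2X^2Z^2+2Y^2Z^2-[Y^2,Z^2]$. Normal ordering then yields $[Y^2,Z^2]=4XYZ-2X^2+2Y^2-2Z^2$, so $C^2$ contains the degree-three monomial $-4XYZ$. (Your parity statement does hold for the grading induced by the symmetrization map $S(\mathfrak{g})\to\mathcal{U}$, but not for the ordered basis you invoked.)

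The conclusion survives, and the repair is the top-degree argument the paper actually uses. Since the $C^k$ have pairwise distinct degrees $2k$, any nonzero combination $\sum_k c_kC^k$ has degree $2\max\{k\mid c_k\neq0\}\ge 2$. Every nonzero element of $\spn\{X,Y,Z\}$ has degree one. Hence $\spn\{X,Y,Z\}\cap\spn\{C^k\mid 1\le k\le d\}=\{0\}$, which gives linear independence over $\C$ and therefore over $\R$.
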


\begin{proof}
    It holds that $\mathrm{i}C^k$ is skew-hermitian, in the sense defined for $\hat{\mathcal{U}}$, for every $k\in\Np{}$, because $C^\dagger=C$. The PBW theorem guarantees that different  powers $C^k$ are linearly independent, as they have distinct degrees. Thus, $\spn_\R\{\mathrm{i}C^k\,\mid\,k\in\{1,\ldots,n\}\}\cong\R^n$. Also, $\mathfrak{su}_\R(2)\cong\spn_\R\{X,Y,Z\}$. Since the basis elements $\mathrm{i}C^k$ are central and have even positive degrees, their span has only trivial intersection with $\mathfrak{su}_\R(2)\cong\spn_\R\{X,Y,Z\}$. The sum of both spaces is therefore direct and realizes the Lie subalgebra $\su_\R(2)\oplus\R^n$ for every $n\in\Np{}$.
\end{proof}

Finally, we conjecture the following.

\begin{conjecture}
    Let $\varphi:\su_\R(2)\hookrightarrow\hat{\mathcal{U}}$ be an embedding. Then $\varphi(\su_\R(2))=\spn_\R\{X,Y,Z\}$. That is, it holds that $\varphi=\iota\circ\psi$, where $\iota$ denotes the canonical embedding and $\psi\in\operatorname{Aut}(\su_\R(2))$.
\end{conjecture}

It should be remarked that equipping $\mathcal{U}$ with a different conjugate-linear anti-involution $(\cdot)^{\Tilde{\dagger}}:\mathcal{U}\to\mathcal{U}$ can result in different real \enquote{skew-hermitian} Lie subalgebras. For example, set $H^{\Tilde{\dagger}}:=-H$, $E^{\Tilde{\dagger}}:=-E$, $F^{\Tilde{\dagger}}:=-F$, and $\unity^{\Tilde{\dagger}}:=\unity$, extended conjugate-linearly and by reversing products. These rules preserve the defining relations and allow the real Lie subalgebra $\sl{2}{\R}\cong\spn_\R\{H,F,E\}$ inside the resulting real \enquote{skew-hermitian} component of $\mathcal{U}$. Thus the compact classification above depends on the chosen involution. We chose $X^\dagger:=-X$, $Y^\dagger:=-Y$, and $Z^\dagger:=-Z$, motivated by their skew-hermitian Jordan--Schwinger realizations~\cite{Heib:2026:SSRC:Universality,Jordan:1935,Schwinger1960}.

\section{Discussion and Applications}\label{sec:discussion:applications}

The need to classify finite-dimensional Lie subalgebras of $\mathcal{U}$ originally arose from the study of physically relevant finite-dimensional Lie subalgebras of the Weyl algebra, in particular in one and two modes~\cite{Heib:2025:A1,Bruschi:Xuereb:2024,Heib:2026:Main}. In the two-mode setting, one is particularly interested in the centralizer of the free Hamiltonian $H_0=\omega(a^\dagger a+ b^\dagger b)$, where $\omega>0$, $a\equiv a_1$ and $b\equiv a_2$ are the annihilation operators of two orthogonal modes, with creation operators $a^\dagger\equiv a_1^\dagger$ and $b^\dagger\equiv a_2^\dagger$, satisfying the canonical commutation relations $[a,a^\dagger]=\mathds{1}=[b,b^\dagger]$ and $[a,b]=0=[a, b^\dagger]$~\cite{Messiah:1999,Dixmier:1973}, where $\mathds{1}$ shall denote the associative unit in the Weyl algebra. Thus, the goal of this section is to relate the classification from Theorem~\ref{thm:classification:Usl2C} to explicit realizations of the corresponding Lie subalgebras within the Weyl algebras $A_1$ and $A_2$.

Nevertheless, before doing so, we want to make the following observation.

\begin{corollary}
    Every finite-dimensional Lie subalgebra of $\mathcal{U}$ is minimal-graph-admissible.
\end{corollary}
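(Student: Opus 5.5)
By Theorem~\ref{thm:classification:Usl2C}, every finite-dimensional Lie subalgebra $\g\subseteq\mathcal{U}$ is isomorphic to one of the listed algebras, together with the zero algebra. Minimal-graph-admissibility is a property of the isomorphism class, since an isomorphism carries an admissible basis to an admissible basis. The plan is therefore to exhibit, for each class in the theorem, one basis in which every non-zero bracket of two basis elements is a scalar multiple of a single basis element. The zero algebra is handled by the empty basis, for which the condition holds vacuously.

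The cases go as follows.
\begin{itemize}
\item \textbf{Abelian $\C^d$:} any basis works, because all brackets vanish.
\item \textbf{$\sl{2}{\C}$:} take the basis $\{X,Y,Z\}$, where $[X,Y]=Z$, $[Y,Z]=X$, $[Z,X]=Y$. Alternatively, take $\{H,E,F\}$, where $[H,E]=2E$, $[H,F]=-2F$, $[E,F]=H$. Either basis satisfies Definition~\ref{def:assocaited:graph} directly, as Fig.~\ref{fig:sl2C:XYZ:basis} already illustrates.
\item \textbf{$\sl{2}{\C}\oplus\C^d$:} take the union of the basis $\{X,Y,Z\}$ with any basis of $\C^d$. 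Brackets involving a central basis element vanish, and all other brackets are as in $\sl{2}{\C}$.
\item \textbf{$\mathfrak{r}(\boldsymbol{d},\boldsymbol{m})\oplus\C^{d_0}$:} take the basis $\{x,y_{jk_j}\}$ of Definition~\ref{def:rnm} together with a basis of $\C^{d_0}$. The only non-zero brackets are $[x,y_{jk_j}]=d_jy_{jk_j}$ and their negatives $[y_{jk_j},x]=-d_jy_{jk_j}$, each a scalar multiple of one basis element.
\end{itemize}

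The one point needing care is that the statement concerns the actual subalgebra $\g\subseteq\mathcal{U}$ rather than its abstract model. I will handle this by pulling the chosen basis back along an isomorphism $\varphi:\mathfrak{a}\to\g$, where $\mathfrak{a}$ is the model algebra. For basis elements $u,v$ of $\mathfrak{a}$ with $[u,v]=\kappa w$, we get $[\varphi(u),\varphi(v)]=\varphi([u,v])=\kappa\varphi(w)$, so $\varphi(\mathcal{B})$ is again admissible. I do not expect a real obstacle here. The substantive work was done in Theorem~\ref{thm:classification:Usl2C}, in particular in showing that the solvable non-nilpotent case yields a diagonalizable $\operatorname{ad}_x$ with abelian $\mathcal{D}\g$ and that there are no non-abelian nilpotent subalgebras, and that is exactly what gives bases of this eigenvector form.
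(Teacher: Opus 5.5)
Your proposal is correct and follows essentially the same route as the paper. You invoke Theorem~\ref{thm:classification:Usl2C}, use the bases $\{X,Y,Z\}$, $\{x,y_{jk_j}\}$ and any basis of $\C^{d_0}$, take unions of bases for the direct sums since mixed brackets vanish, and transport admissibility along isomorphisms; your explicit handling of the zero algebra and of the isomorphism transfer just spells out what the paper leaves implicit or cites (that transfer is a push-forward along $\varphi$, not a pull-back, but this is only terminology).
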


\begin{proof}
    This is a direct consequence of Theorem~\ref{thm:classification:Usl2C} and Definition~\ref{def:assocaited:graph}, together with the bases of the Lie algebras $\C^{d_0}$, $\sl{2}{\C}$, and $\mathfrak{r}(\boldsymbol{d},\boldsymbol{m})$, employed throughout this work.  The union of the chosen bases works for each direct sum, since all mixed brackets vanish, and the property is preserved under Lie-algebra isomorphisms~\cite{Heib:2026:Graphs}. 
\end{proof}

\subsection{Conventions}

The complex Weyl algebra $A_n$ is the unital associative algebra generated by the set $\{a_j,a_j^\dagger,\mathds{1}\}_{j=1}^n$, where the operators $a_j$ are denoted as annihilation operators and $a_j^\dagger$ as creation operators. These satisfy the canonical commutation relations $[a_j,a_k^\dagger]=\delta_{jk}\mathds{1}$, while all other commutators vanish, and we write $a^\gamma=a^{(\alpha,\beta)}=(a_1^\dagger)^{\alpha_1}\ldots (a_n^\dagger)^{\alpha_n}a_1^{\beta_1}\ldots a_n^{\beta_n}$, where $\gamma=(\alpha,\beta)\in\Np{2n}$, for the normal ordered monomial basis elements of $A_n$~\cite{Dixmier:1968,Bruschi:Xuereb:2024}. In the Weyl algebra, the degree of a non-zero polynomial $p=\sum_{\gamma\in \Gamma} c_\gamma a^\gamma\in A_n$, with $c_\gamma\neq 0$ for all $\gamma$ in the finite set $\Gamma$, is defined as $\deg(p)=\max\{|\gamma|\,\mid\, \gamma\in \Gamma\}$, where $|\gamma|=|\alpha|+|\beta|=\alpha_1+\ldots+\alpha_n+\beta_1+\ldots+\beta_n$.

The conjugate-linear anti-involution $(\cdot)^\dagger: A_n\to A_n$ is defined via $(a_j)^\dagger:=a_j^\dagger$, $(a_j^\dagger)^\dagger:=a_j$, $\mathds{1}^\dagger=\mathds{1}$, $(\mathrm{i}\mathds{1})^\dagger=-\mathrm{i}\mathds{1}$, and the rule $(AB)^\dagger=B^\dagger A^\dagger$~\cite{Heib:2025:A1}. The real skew-hermitian Weyl algebra is then defined as the space
\begin{align*}
    \hat{A}_n:=\{p\in A_n\,\mid\, p^\dagger=-p\},
\end{align*}
equipped with the commutator bracket inherited from $A_n$~\cite{Bruschi:Xuereb:2024}.

\subsection{Embedding into the single-mode Weyl algebra $A_1$}\label{subsec:embedding:single:mode:Weyl:algebra}

It is well known that the special linear Lie algebra $\sl{2}{\C}$ embeds into the single-mode Weyl algebra $A_1$~\cite{TST:2006}, for instance, by utilizing the faithful realization $\varphi:\sl{2}{\C}\hookrightarrow A_1$ defined by
\begin{align*}
    \varphi(X)&:=\frac{e^{\frac{\mathrm{i}\pi}{4}}}{4}\left(a^2+\mathrm{i}(a^\dagger)^2\right)+\frac{\mathrm{i}}{\sqrt{8}}\left(2a^\dagger a+\mathds{1}\right),\;&\;\varphi(Y)&:=\frac{e^{\frac{\mathrm{i}\pi}{4}}}{4}\left(-a^2+\mathrm{i}(a^\dagger)^2\right),\;&\;\varphi(Z)&:=\frac{e^{\frac{\mathrm{i}\pi}{4}}}{\sqrt{8}}\left(\mathrm{i}a^2-(a^\dagger)^2\right)-\frac{1}{4}\left(2a^\dagger a+\mathds{1}\right)
\end{align*}
or equivalently, via
\begin{align}\label{eqn:realization:HEF:in:hat:A:1}
    \varphi(E)&:=\frac{\mathrm{i}}{\sqrt{8}}\left(a^2+(a^\dagger)^2+g_0\right),\;&\;\varphi(F)&:=-\frac{1}{\sqrt{8}}\left(a^2-(a^\dagger)^2+\mathrm{i}g_0\right),\;&\;\varphi(H)&:=\frac{e^{\frac{\mathrm{i}\pi}{4}}}{\sqrt{2}}\left(a^2+\mathrm{i}(a^\dagger)^2\right)+\frac{\mathrm{i}}{2}g_0,
\end{align}
where $g_0:=2a^\dagger a+\mathds{1}=a^\dagger a+aa^\dagger$.
These satisfy the required commutation relations~\cite{Heib:2025:A1}. Moreover, their normal ordered expansions are linearly independent, verifying that $\varphi$ is a faithful Lie-algebra homomorphism. Since $\varphi(E)$, $\varphi(F)$, and $\varphi(H)$ are all skew-hermitian in $A_1$, the restriction of  $\varphi$ to $\spn_\R\{E,F,H\}\cong\sl{2}{\R}$ constitutes an embedding of $\sl{2}{\R}$ into $\hat{A}_1$. Furthermore, it is also known that the only real form of $\sl{2}{\C}$ that embeds into the real skew-hermitian Weyl algebra $\hat{A}_1$ is $\sl{2}{\R}$~\cite{Heib:2025:A1}. 

The realization above is quadratic in the creation and annihilation operators. However, we can employ the construction in Proposition~\ref{eqn:non:linear:realizations:sl2C}, with suitable modifications regarding additional phase factors to preserve skew-hermiticity, to obtain an embedding of $\sl{2}{\R}$ into $\hat{A}_1$ that realizes, for every $m\in\N_{>1}$, at least one basis element of degree $4m-2$ in the creation and annihilation operators. To be precise, one has $\psi_m:\sl{2}{\R}\hookrightarrow \hat{A}_1$, by defining
\begin{align}\label{eqn:non:linear:realizations:sl2C:in:A1}
    \psi_m(E)&=\frac{\mathrm{i}}{\sqrt{8}}\left(a+a^\dagger-\left(e^{-\frac{\mathrm{i}\pi}{4}}a+e^{\frac{\mathrm{i}\pi}{4}}a^\dagger\right)^{2m-1}\right)^2,\;&\;
    \psi_m(F)&=\varphi(F),\;&\; 
    \psi_m(H)&=\varphi(H)-\frac{\mathrm{i}}{\sqrt{2}}\left(e^{-\frac{\mathrm{i}\pi}{4}}a+e^{\frac{\mathrm{i}\pi}{4}}a^\dagger\right)^{2m}.
\end{align}
For a detailed derivation, we refer to Appendix~\ref{app:realization:calcs}. This illustrates that finite-dimensional Lie algebras can be generated by nonlinear elements of a skew-hermitian Weyl algebra, that is, polynomials in bosonic creation and annihilation operators of total degree greater than two~\cite{Braunstein:2005}. Related Lie-algebraic structures also occur in quasi-exactly solvable quantum problems~\cite{Turbiner:1988,GonzalezLopez:1991}. For bosonic systems whose dynamical Lie algebra is finite-dimensional, the Wei--Norman method (cf. Ref.~\cite{Wei:Norman:1963,Wei:Norman:1964}) provides, locally in time and for suitable representations that integrate to unitary Lie-group representations, an exact factorization of the physically interesting time-evolution operator into finitely many exponentials~\cite{Bruschi:Xuereb:2024,Heib:2025:A1}. The time evolution of a system whose time-dependent Hamiltonian, describing the physical system of interest, generates such a finite-dimensional dynamical Lie algebra is therefore determined by a finite system of generally nonlinear differential equations, even when the underlying Hilbert space is infinite-dimensional. Thus, this method provides an exact alternative to truncating the Dyson series~\cite{Dyson:1949}, the Magnus expansion~\cite{Magnus:1954,Blanes:2009}, or the BCH-series~\cite{Rossmann:2002}, and to finite product approximations based on Lie-Suzuki-Trotter  formulas~\cite{Hall:Lie:groups:16,Trotter:1959,Suzuki:1976,Suzuki:1991}. Although the resulting differential equations need not admit closed-form solutions, the factorization itself requires no such truncation.

Finally, we remark that $\mathcal{U}$ itself does not embed into $A_1$ as an associative algebra, which is also a Lie algebra with the commutator bracket, since $\dim(\sl{2}{\C})=3>2$. The exclusion is shown in the lemma below.

\begin{lemma}
    Let $\g$ be a non-zero finite-dimensional Lie algebra with dimension $\dim(\g)=m$. Then, a unital associative-algebra embedding $U(\g)\hookrightarrow A_n$, which is also a Lie-algebra embedding for the commutator brackets, can exist only if $m\leq 2n$.
\end{lemma}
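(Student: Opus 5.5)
The plan is to compare the growth of filtered pieces on both sides, that is, to use Gelfand--Kirillov dimension, with the counting carried out by hand exactly as in the proofs of Lemmata~\ref{lem:adx:diagonalizable} and~\ref{lem:adx:positive:rational:ratios}. Suppose $\Psi:U(\g)\hookrightarrow A_n$ is a unital injective associative-algebra homomorphism. Fix a basis $\{b_1,\ldots,b_m\}$ of $\g$ and an ordering, and let $D:=\max_j\deg(\Psi(b_j))$, where the degree is the Weyl-algebra degree. I expect $D\geq 1$, since each $\Psi(b_j)$ is non-zero. If some $\Psi(b_j)$ were a scalar multiple of $\mathds{1}$, the bound below would only get easier.

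By the PBW theorem, the ordered monomials $b_1^{k_1}\cdots b_m^{k_m}$ with $k_1+\ldots+k_m\leq N$ are linearly independent in $U(\g)$. There are $\binom{N+m}{m}$ of them, a number in $\Theta(N^m)$. Injectivity of $\Psi$ then shows that their images $\Psi(b_1)^{k_1}\cdots\Psi(b_m)^{k_m}$ are linearly independent in $A_n$.

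Next I will bound the degrees of these images. The key point is that in $A_n$ one has $\deg(pq)\leq\deg(p)+\deg(q)$, because normal ordering a product via the canonical commutation relations only produces correction terms of lower degree, each contraction reducing the degree by two. Hence every image has degree at most $DN$. It therefore lies in the span of the normal ordered monomials $a^\gamma$ with $|\gamma|\leq DN$, a space of dimension $\binom{DN+2n}{2n}\in\Theta(N^{2n})$.

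Comparing the two counts gives $\binom{N+m}{m}\leq\binom{DN+2n}{2n}$ for all $N$. As $N\to\infty$ this forces $m\leq 2n$, since $D$ is fixed. The only real care needed is the subadditivity of the degree in $A_n$, which is the standard filtration property. I expect this to be the main, though mild, obstacle: I would justify it by noting that $[a_j,a_k^\dagger]=\delta_{jk}\mathds{1}$ has degree $0<2$, so reordering never raises the degree. The Lie-embedding hypothesis is not needed beyond injectivity. Applying this with $\g=\sl{2}{\C}$ and $n=1$ gives $3>2$, which excludes $\mathcal{U}\hookrightarrow A_1$.
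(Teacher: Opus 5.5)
Your proposal is correct and is essentially the paper's proof. Like the paper, you inject the $\binom{N+m}{m}$ PBW monomials of total degree at most $N$ into the span of normal-ordered monomials $a^\gamma$ with $|\gamma|\le DN$ and let $N\to\infty$; you additionally justify the subadditivity $\deg(pq)\le\deg(p)+\deg(q)$ in $A_n$, which the paper uses tacitly. Your reason for $D\geq 1$ is loose, since a non-zero scalar has degree zero, but this is harmless: the inequality forces $m\le 2n$ for any fixed $D$, and injectivity rules out scalar images $\Psi(b_j)=c\,\mathds{1}$ anyway, because $b_j-c\,\unity\neq 0$ in $U(\g)$.
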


\begin{proof}
    Let $\mathcal{B}:=\{b_1,\ldots,b_m\}$ be a basis of $\g$ and suppose $\varphi:U(\g)\hookrightarrow A_n$. Set $d:=\max\{\deg(\varphi(b_j))\,\mid\, j\in\{1,\ldots,m\}\}$. For fixed $N\in\Np{}$, it holds now that
    \begin{align*}
        \dim\left(\varphi\left(\spn\left\{b_1^{k_1}\ldots b_m^{k_m}\,\middle\mid\,(k_1,\ldots,k_m)\in\mathcal{I}_m^{(N)}\right\}\right)\right)=\dim\left(\spn\left\{b_1^{k_1}\ldots b_m^{k_m}\,\middle\mid\,(k_1,\ldots,k_m)\in\mathcal{I}_m^{(N)}\right\}\right)=\binom{N+m}{m}.
    \end{align*}
    However, since $\varphi(\spn\{b_1^{k_1}\ldots b_m^{k_m}\,\mid\,(k_1,\ldots,k_m)\in\mathcal{I}_m^{(N)}\})\subseteq\spn\{a^\gamma\,\mid\,\gamma\in\Np{2n}:\,|\gamma|\leq dN\}$, it must also hold that
    \begin{align*}
        \binom{N+m}{m}\leq\dim\left(\spn\left\{a^\gamma\,\mid\,\gamma\in\Np{2n}:\,|\gamma|\leq dN\right\}\right)=\binom{dN+2n}{2n}.
    \end{align*}
    This inequality can clearly only hold for all $N\in\Np{}$ if $m\leq 2n$.
\end{proof}

However, it is easy to recognize that the embedding $\varphi:\sl{2}{\C}\hookrightarrow A_1$ gives rise to a unital associative-algebra homomorphism $\Phi:\mathcal{U}\to A_1$, which is also a Lie-algebra homomorphism for commutator brackets, where $\ker(\Phi)=(C-3\unity/16)\mathcal{U}$. This is straightforward to verify. Indeed, explicit calculations reveal that $\Phi(C)=3\mathds{1}/16$. Thus, it holds that $(C-3\unity/16)\mathcal{U}\subseteq \ker(\Phi)$. To obtain the converse, recall that $(C-3\unity/16)\mathcal{U}$ is a maximal two-sided ideal of $\mathcal{U}$~\cite{Arnal:1973}. Moreover, $\ker(\Phi)$ is a proper ideal since $\Phi(0)=0$ and $\Phi(\unity)=\mathds{1}$. A maximal proper ideal cannot be strictly contained in another proper ideal. Thus, one must have $\ker(\Phi)=(C-3\unity/16)\mathcal{U}$.

Consequently, the representatives constructed for Theorem~\ref{thm:classification:Usl2C} have the following image Lie algebras: $\C^{d_0}$, $\sl{2}{\C}$, $\sl{2}{\C}\oplus\C$, $\mathfrak{r}(\boldsymbol{d},\boldsymbol{m})$, and $\mathfrak{r}(\boldsymbol{d},\boldsymbol{m})\oplus\C$, where $\boldsymbol{m}=(1,\ldots,1)$, since powers of $C$ become scalar. These have all been identified as finite-dimensional Lie subalgebras of the complex single-mode Weyl algebra $A_1$~\cite{TST:2006}. Moreover, all of these also occur with at least one real form within the real skew-hermitian Weyl algebra~\cite{Heib:2025:A1}.

\subsection{Embedding into the two-mode Weyl algebra $A_2$}\label{subsec:embedding:two:mode:Weyl:algebra}

The embedding of $\sl{2}{\C}$ into the two-mode Weyl algebra $A_2$ is known as the Jordan-Schwinger map, which, in the convention of Lemma~\ref{lem:faithful:skew:hermitian:representation}, can be defined via~\cite{Heib:2026:SSRC:Universality,Schwinger:1952,Jordan:1935}
\begin{align*}
    \varphi(X)&=-\frac{\mathrm{i}}{2}\left(a_1^\dagger a_2+a_2^\dagger a_1\right),\;&\;\varphi(Y)&:=\frac{1}{2}\left(a_1^\dagger a_2-a_2^\dagger a_1\right),\;&\;\varphi(Z)&:=\frac{\mathrm{i}}{2}\left(a_1^\dagger a_1-a_2^\dagger a_2\right).
\end{align*}
These images are clearly linearly independent, skew-hermitian, and satisfy the cyclic commutation relations of the basis elements of $\su_\R(2)$~\cite{Pfeifer:2003}. Thus, $\varphi$ restricts to an embedding $\su_\R(2)\hookrightarrow \hat{A}_2$. The induced unital associative-algebra homomorphism $\Phi:\mathcal{U}\to A_2$, which is also a Lie-algebra homomorphism for the commutator brackets, is furthermore injective: under the identification $a_j^\dagger\leftrightarrow x_j$ and $a_j\leftrightarrow \partial_j$, one observes that $\Phi(\mathcal{U})\cong P(\mathcal{U})\cong \mathcal{U}$, where $P$ is the faithful polynomial representation of $\mathcal{U}$ from the proof of Lemma~\ref{lem:faithful:skew:hermitian:representation}.

\section{Conclusion}\label{sec:conclusion}

In this work, we have classified the non-zero finite-dimensional complex Lie subalgebras of the universal enveloping algebra $\mathcal{U}$ of the special linear Lie algebra $\sl{2}{\C}$ up to isomorphism. Moreover, we have provided explicit realizations of the listed types. In a second step, we equipped $\mathcal{U}$ with a conjugate-linear anti-involution $(\cdot)^\dagger$ and classified the real finite-dimensional Lie subalgebras of its real infinite-dimensional skew-hermitian part $\hat{\mathcal{U}}$, consisting of all elements satisfying $A=-A^\dagger$. We then discussed the consequences of these results for embeddings of such finite-dimensional Lie algebras in the single- and two-mode Weyl algebras. 

Potential future research directions include the classification of finite-dimensional Lie subalgebras of $U(\mathfrak{sp}(2\cdot 2,\C))$ or $U(\mathfrak{sp}(2\cdot 2,\C)\ltimes\mathfrak{h}_2)$, where $\mathfrak{h}_2$ denotes the five-dimensional complex Heisenberg algebra. The canonical real forms of the symplectic Lie algebra $\mathfrak{sp}(2\cdot 2,\C)$ and the associated Jacobi algebra $\mathfrak{sp}(2\cdot 2,\C)\ltimes\mathfrak{h}_2$ describe Gaussian unitary transformations of two bosonic modes, without and with displacements, respectively~\cite{Weedbrook:2012,Berceanu:2007}. The complex universal enveloping algebras above may therefore be of interest for studying nonlinear generalizations~\cite{Karassiov:1991,Karassiov:1992,Karassiov:1998}. Other future research directions concern classifying the finite-dimensional Lie subalgebras of genuine multi-mode Weyl algebras and their skew-hermitian counterparts.

\section*{Data availability statement}
	
Data sharing is not applicable to this work as no datasets are used to support the presented results.

\section*{Acknowledgments}

The author thanks David Edward Bruschi, Niklas Jung, and Euan Walker for listening to his presentations of parts of the proofs. He is especially grateful to David Edward Bruschi for pointing out an error in one of his proofs, as well as for his guidance and insightful comments.

\section*{Conflict of interest}
	
The author has no relevant financial or non-financial interests to disclose.

\section*{Author contributions}

Tim Heib developed the mathematical ideas and proofs presented in this work. He also produced all figures.\par\noindent
\textit{Use of a large language model.} Tim Heib used an LLM to assist with reviewing proofs for possible errors, checking spelling and grammar, conducting literature searches, and editing the bibliography. He critically assessed the suggestions and independently checked every cited reference. Responsibility for the final content remains with the author.

\section*{Funding}
    
Tim Heib received no external funding for this work.

\bibliography{main}

\onecolumngrid
\newpage
\appendix

\section{On the realizations of $\sl{2}{\R}$ in the real skew-hermitian Weyl algebra $\hat{A}_1$}\label{app:realization:calcs}

In this section, we want to show that the realization defined in Eq.~\ref{eqn:non:linear:realizations:sl2C:in:A1} of $\sl{2}{\R}$ in $\hat{A}_1$ is indeed obtained by the prescribed construction.

First, let us recall the construction employed in Proposition~\ref{eqn:non:linear:realizations:sl2C}. For $m\in\N_{>1}$, utilizing the canonical basis elements $H,E,F\in\mathcal{U}$ satisfying the Lie bracket relations $[H,E]=2E$, $[H,F]=-2F$, and $[E,F]=H$, we can construct an embedding of $\sl{2}{\C}$ into $\mathcal{U}$ by defining the linear map $\chi_m:\sl{2}{\C}\to \mathcal{U}$ on the basis elements $H,E,F$ via
\begin{align*}
    \chi_m(H)&:=H-F^m,\;&\;\chi_m(E)&:=E+\frac{1}{2m}\sum_{\ell=0}^{m-1}F^\ell H F^{m-1-\ell}-\frac{1}{4}F^{2m-1},\;&\;\chi_m(F)&:= F.
\end{align*}
Here, we may now recognize that
\begin{align*}
    \chi_m(H)&=H-\frac{1}{2m}[F^m,H]=\operatorname{ad}_{-\frac{1}{2m}F^m}^0(H)+\operatorname{ad}_{-\frac{1}{2m}F^m}^1(H),\\
    \chi_m(E)&=E-\frac{1}{2m}[F^m,E]+\frac{1}{8m^2}[F^m,[F^m,E]]=\operatorname{ad}_{-\frac{1}{2m}F^m}^0(E)+\operatorname{ad}_{-\frac{1}{2m}F^m}^1(E)+\frac{1}{2}\operatorname{ad}_{-\frac{1}{2m}F^m}^2(E).
\end{align*}
Since $F$ commutes with any power of itself, it holds consequently, for all $B\in\sl{2}{\C}$, that
\begin{align*}
    \chi_m(B)=\exp\left(\operatorname{ad}_{-\frac{1}{2m}F^m}\right)(B)=\exp\left(-\frac{1}{2m}\operatorname{ad}_{F^m}\right)(B).
\end{align*}
In order to construct an embedding of $\sl{2}{\R}$ into $\hat{A}_1$, we introduce a yet undetermined complex factor $\theta_m\in\C$ into the exponential:
\begin{align*}
    \Tilde{\chi}_m:\sl{2}{\C}\longrightarrow \mathcal{U},\quad B\mapsto \Tilde{\chi}_m(B):=\exp\left(-\frac{\theta_m}{2m}\operatorname{ad}_{F^m}\right)(B).
\end{align*}
Here it is essential to note that, for all $\kappa_m\in\C$, the derivation $\operatorname{ad}_{\kappa_m F^m}$ is locally nilpotent on $\mathcal{U}$~\cite{Daigle:2019}. Indeed, it holds that $\operatorname{ad}_{F^m}^3(A)=0$ for all $A\in\spn\{H,E,F\}$, which implies that, for each $B\in\mathcal{U}$, there exists some $\ell\in\Np{}$ such that $\operatorname{ad}_{\kappa_m F^m}^\ell(B)=0$. Thus, for all $\theta_m\in\C$, the map $\Tilde{X}_m:\mathcal{U}\to \mathcal{U}$, obtained by extending $\Tilde{\chi}_m$ uniquely using the universal property of $\mathcal{U}$, is a unital associative-algebra automorphism and hence also a Lie-algebra automorphism of $\mathcal{U}$~\cite{Humphreys:1972}. Its restriction $\Tilde{\chi}_m$ is therefore a Lie-algebra embedding of $\sl{2}{\C}$ into $\mathcal{U}$, preserving the commutation relations.

In the next step, we recognize that, when writing out the expressions for the realizations from Eq.~\eqref{eqn:realization:HEF:in:hat:A:1} without normal ordering, one finds
\begin{align*}
    \varphi(E)&=\frac{\mathrm{i}}{\sqrt{8}}\left(a+a^\dagger\right)^2,\;&\;\varphi(F)&=-\frac{1}{\sqrt{8}}\left(a+\mathrm{i}a^\dagger\right)^2,\;&\;\varphi(H)&=\frac{1+\mathrm{i}}{4}\left((a+a^\dagger)(a+\mathrm{i}a^\dagger)+(a+\mathrm{i}a^\dagger)(a+a^\dagger)\right).
\end{align*}
Here, it is helpful to notice that $[a+\mathrm{i}a^\dagger, a+a^\dagger]=(1-\mathrm{i})\mathds{1}$. We may then compute:
\begin{align*}
    \left[-\frac{\theta_m}{2m}(\varphi(F))^m,\varphi(E)\right]&=-\frac{\theta_m}{2m}\left(-\frac{1}{\sqrt{8}}\right)^m\frac{\mathrm{i}}{\sqrt{8}}\left[\left(a+\mathrm{i}a^\dagger\right)^{2m},\left(a+a^\dagger\right)^2\right]\\
    &=-\frac{\theta_m}{2m}\left(-\frac{1}{\sqrt{8}}\right)^m\frac{\mathrm{i}}{\sqrt{8}}2m(1-\mathrm{i})\left(\left(a+a^\dagger\right)\left(a+\mathrm{i}a^\dagger\right)^{2m-1}+\left(a+\mathrm{i}a^\dagger\right)^{2m-1}\left(a+a^\dagger\right)\right)\\
    &=(1+\mathrm{i})\theta_m\left(-\frac{1}{\sqrt{8}}\right)^{m+1}\left(\left(a+a^\dagger\right)\left(a+\mathrm{i}a^\dagger\right)^{2m-1}+\left(a+\mathrm{i}a^\dagger\right)^{2m-1}\left(a+a^\dagger\right)\right),
\end{align*}
\begin{align*}
    \frac{1}{2}\left[-\frac{\theta_m}{2m}(\varphi(F))^m,\left[-\frac{\theta_m}{2m}(\varphi(F))^m,\varphi(E)\right]\right]&=-(1+\mathrm{i})\frac{\theta_m^2}{4m}\left(-\frac{1}{\sqrt{8}}\right)^{2m+1}4m(1-\mathrm{i})\left(a+\mathrm{i}a^\dagger\right)^{4m-2}\\
    &=-2\theta_m^2\left(-\frac{1}{\sqrt{8}}\right)^{2m+1}\left(a+\mathrm{i}a^\dagger\right)^{4m-2},
\end{align*}
and
\begin{align*}
     \left[-\frac{\theta_m}{2m}(\varphi(F))^m,\varphi(H)\right]&=-\frac{\theta_m}{2m}\frac{1+\mathrm{i}}{4}\left(-\frac{1}{\sqrt{8}}\right)^m4m(1-\mathrm{i})\left(a+\mathrm{i}a^\dagger\right)^{2m}=-\theta_m\left(-\frac{1}{\sqrt{8}}\right)^m\left(a+\mathrm{i}a^\dagger\right)^{2m}.
\end{align*}
Hence, the composition of $\Tilde{\chi}_m$ with the extension $\Phi:\mathcal{U}\to A_1$ of $\varphi$ to $\mathcal{U}$ yields the Lie-algebra homomorphism $\psi_m:=\Phi\circ\Tilde{\chi}_m$, with
\begin{align*}
    \psi_m(E)&=\frac{\mathrm{i}}{\sqrt{8}}\left(a+a^\dagger-(1-\mathrm{i})\theta_m\left(-\frac{1}{\sqrt{8}}\right)^m\left(a+\mathrm{i}a^\dagger\right)^{2m-1}\right)^2,\\
    \psi_m(F)&=-\frac{1}{\sqrt{8}}\left(a+\mathrm{i}a^\dagger\right)^2,\\
    \psi_m(H)&=\frac{1+\mathrm{i}}{4}\left((a+a^\dagger)(a+\mathrm{i}a^\dagger)+(a+\mathrm{i}a^\dagger)(a+a^\dagger)\right)-\theta_m\left(-\frac{1}{\sqrt{8}}\right)^m\left(a+\mathrm{i}a^\dagger\right)^{2m}.
\end{align*}
Finally, we need to choose $\theta_m$ such that $\psi_m(E)$, $\psi_m(F)$, and $\psi_m(H)$ are all skew-hermitian within $A_1$. Currently, it holds that
\begin{align*}
    \left(\psi_m(F)\right)^\dagger&=-\frac{1}{\sqrt{8}}\left(a^2+\mathrm{i}\left(a^\dagger a+aa^\dagger\right)-(a^\dagger)^2\right)^\dagger=-\frac{1}{\sqrt{8}}\left((a^\dagger)^2-\mathrm{i}\left(a^\dagger a+aa^\dagger\right)-a^2\right)=\frac{1}{\sqrt{8}}\left(a^2+\mathrm{i}\left(a^\dagger a+aa^\dagger\right)-(a^\dagger)^2\right)\\
    &=-\psi_m(F),
\end{align*}
which suffices the condition $\psi_m(F)\in\hat{A}_1$. Considering $\psi_m(H)$, on the other hand, yields
\begin{align*}
    \left(\psi_m(H)\right)^\dagger&=\frac{1-\mathrm{i}}{4}\left((a^\dagger-\mathrm{i}a)(a+a^\dagger)+(a+a^\dagger)(a^\dagger-\mathrm{i}a)\right)-\theta_m^*\left(-\frac{1}{\sqrt{8}}\right)^m(-1)^m(a+\mathrm{i}a^\dagger)^{2m}\\
    &=-\mathrm{i}\frac{1-\mathrm{i}}{4}\left((a+a^\dagger)(a+\mathrm{i}a^\dagger)+(a+\mathrm{i}a^\dagger)(a+a^\dagger)\right)-(-1)^m\theta_m^*\left(-\frac{1}{\sqrt{8}}\right)^m(a+\mathrm{i}a^\dagger)^{2m}\\
    &=-\frac{1+\mathrm{i}}{4}\left((a+a^\dagger)(a+\mathrm{i}a^\dagger)+(a+\mathrm{i}a^\dagger)(a+a^\dagger)\right)-(-1)^m\theta_m^*\left(-\frac{1}{\sqrt{8}}\right)^m(a+\mathrm{i}a^\dagger)^{2m}.
\end{align*}
Thus, for $(\psi_m(H))^\dagger=-\psi_m(H)$ to hold, we need to require $\theta_m^*(-1)^m=-\theta_m$. This is consistent with the requirement stemming from $(\psi_m(E))^\dagger=-\psi_m(E)$, as
\begin{align*}
    \left(\psi_m(E)\right)^\dagger&=-\frac{\mathrm{i}}{\sqrt{8}}\left(a+a^\dagger-(1+\mathrm{i})\theta_m^*\left(-\frac{1}{\sqrt{8}}\right)^m(-1)^{m-1}(a^\dagger-\mathrm{i}a)(a+\mathrm{i}a^\dagger)^{2m-2}\right)^2\\
    &=-\frac{\mathrm{i}}{\sqrt{8}}\left(a+a^\dagger-\mathrm{i}(1+\mathrm{i})(-1)^m\theta_m^*\left(-\frac{1}{\sqrt{8}}\right)^m(a+\mathrm{i}a^\dagger)^{2m-1}\right)^2\\
    &=-\frac{\mathrm{i}}{\sqrt{8}}\left(a+a^\dagger-(1-\mathrm{i})\theta_m\left(-\frac{1}{\sqrt{8}}\right)^m(a+\mathrm{i}a^\dagger)^{2m-1}\right)^2=-\psi_m(E).
\end{align*}
The equation $\theta_m^*(-1)^m=-\theta_m$ is solved via $\theta_m=\mathrm{i}^{m+1}\rho_m$, where $\rho_m\in \R$ can be chosen freely. 
Since
\begin{align*}
    \left(e^{-\frac{\mathrm{i}\pi}{4}}a+e^{\frac{\mathrm{i}\pi}{4}}a^\dagger\right)^{2m-1}&=e^{-\frac{\mathrm{i}\pi(2m-1)}{4}}\left(a+e^{\frac{\mathrm{i}\pi}{2}}a^\dagger\right)^{2m-1}=\left(e^{-\frac{\mathrm{i}\pi}{2}}\right)^m e^{\frac{\mathrm{i}\pi}{4}}\left(a+\mathrm{i}a^\dagger\right)^{2m-1}=(-1)^m\mathrm{i}^m\frac{1}{\sqrt{2}}(1+\mathrm{i})\left(a+\mathrm{i}a^\dagger\right)^{2m-1},
\end{align*}
we may set $\rho_m=\sqrt{8^m/2}$, such that
\begin{align*}
    (1-\mathrm{i})\theta_m\left(-\frac{1}{\sqrt{8}}\right)^m\left(a+\mathrm{i}a^\dagger\right)^{2m-1}&=(1-\mathrm{i})\mathrm{i}^{m+1}\rho_m\left(-\frac{1}{\sqrt{8}}\right)^m\frac{\sqrt{2}}{(1+\mathrm{i})(-1)^m\mathrm{i}^m}\left(e^{-\frac{\mathrm{i}\pi}{4}}a+e^{\frac{\mathrm{i}\pi}{4}}a^\dagger\right)^{2m-1}\\
    &=(1+\mathrm{i})\mathrm{i}^m\rho_m\left(-\frac{1}{\sqrt{8}}\right)^m\frac{(-1)^m\sqrt{2}}{(1+\mathrm{i})\mathrm{i}^m}\left(e^{-\frac{\mathrm{i}\pi}{4}}a+e^{\frac{\mathrm{i}\pi}{4}}a^\dagger\right)^{2m-1}\\
    &=\sqrt{2}\rho_m\left(\frac{1}{\sqrt{8}}\right)^m\left(e^{-\frac{\mathrm{i}\pi}{4}}a+e^{\frac{\mathrm{i}\pi}{4}}a^\dagger\right)^{2m-1}\\
    &=\left(e^{-\frac{\mathrm{i}\pi}{4}}a+e^{\frac{\mathrm{i}\pi}{4}}a^\dagger\right)^{2m-1}.
\end{align*}
Then
\begin{align*}
    \psi_m(E)&=\frac{\mathrm{i}}{\sqrt{8}}\left(a+a^\dagger-\left(e^{-\frac{\mathrm{i}\pi}{4}}a+e^{\frac{\mathrm{i}\pi}{4}}a^\dagger\right)^{2m-1}\right)^2,\\
    \psi_m(F)&=-\frac{1}{\sqrt{8}}\left(a+\mathrm{i}a^\dagger\right)^2,\\
    \psi_m(H)&=\frac{1+\mathrm{i}}{4}\left((a+a^\dagger)(a+\mathrm{i}a^\dagger)+(a+\mathrm{i}a^\dagger)(a+a^\dagger)\right)-\frac{\mathrm{i}}{\sqrt{2}}\left(e^{-\frac{\mathrm{i}\pi}{4}}a+e^{\frac{\mathrm{i}\pi}{4}}a^\dagger\right)^{2m}.
\end{align*}
At last, we observe that the realization $\psi_m:\sl{2}{\R}\to \hat{A}_1$ is indeed faithful, since the three basis elements above are linearly independent, as seen by comparing the degrees in the creation and annihilation operators.

\end{document}